\DeclareMathAlphabet{\mathpzc}{OT1}{pzc}{m}{it}
\newtheorem{thm}{Theorem}[section]
\newtheorem{lemma}[thm]{Lemma}
\newtheorem{prop}[thm]{Proposition}
\newtheorem{cor}[thm]{Corollary}
\theoremstyle{remark}
\newtheorem{rem}[thm]{Remark}
\theoremstyle{definition}
\newtheoremstyle{Claim}{}{}{\itshape}{}{\itshape\bfseries}{:}{ }{#1}
\theoremstyle{Claim}
\newcommand{\Z}{{\mathbb{Z}}}
\newcommand{\T}{{\mathbb{T}^d}}
\renewcommand{\H}{\mathcal{H}}
\newcommand{\R}{\mathbb{R}}
\newcommand{\N}{\mathbb{N}}
\newcommand\sP{\mathpzc{P}}
\newcommand{\vertiii}[1]{{\left\vert\kern-0.25ex\left\vert\kern-0.25ex\left\vert #1 
    \right\vert\kern-0.25ex\right\vert\kern-0.25ex\right\vert}}
\newcommand{\eps}{\varepsilon}
\newcommand{\norm}[1]{\left\lVert#1\right\rVert}
\title{Maximal $L^q$-regularity for parabolic Hamilton-Jacobi equations and applications to Mean Field Games}
\date{\today}
\author{Marco Cirant and Alessandro Goffi}
\begin{document}

\maketitle

\begin{abstract} 
In this paper we investigate maximal $L^q$-regularity for time-dependent viscous Hamilton-Jacobi equations with unbounded right-hand side and superlinear growth in the gradient. Our approach is based on the interplay between new integral and H\"older estimates, interpolation inequalities, and parabolic regularity for linear equations. These estimates are obtained via a duality method \`a la Evans. This sheds new light on a parabolic counterpart of a conjecture by P.-L. Lions on maximal regularity for Hamilton-Jacobi equations, recently addressed  in the stationary framework by the authors. Finally, applications to the existence problem of classical solutions to Mean Field Games systems with unbounded local couplings are provided.
\end{abstract}

\noindent
{\footnotesize \textbf{AMS-Subject Classification} 35F21, 35K55, 35B65, 35Q89}. {\footnotesize }\\
{\footnotesize \textbf{Keywords}}. {\footnotesize Maximal $L^q$-regularity, H\"older regularity, Hamilton-Jacobi equations with unbounded right-hand side, Khardar-Parisi-Zhang equation, Riccati equation, Adjoint method, Mean-Field Games.}

\section{Introduction}

The purpose of this paper is two-fold. First, to establish maximal $L^q$-regularity for parabolic Hamilton-Jacobi equations with unbounded right-hand side of the form
\begin{equation}\label{hjb}\tag{HJ}
\begin{cases}
\partial_t u(x,t)-\Delta u(x,t)+H(x,Du(x,t))=f(x,t)&\text{ in }Q_T = \T \times (0,T)\ ,\\
u(x,0)=u_0(x)&\text{ in }\T
\end{cases}
\end{equation}
where $H$ has superlinear growth in the gradient variable 
and $f\in L^q(Q_T )$ for some $q>1$. Second, to apply these regularity results to prove the existence of classical solutions for a large class of second order Mean Field Games systems with local coupling, i.e.
\begin{equation}\label{mfgdef}\tag{MFG}
\begin{cases}
-\partial_t u-\Delta u+H(x,Du)=g(m(x,t))&\text{ in }Q_T\\
\partial_tm-\Delta m-\mathrm{div}(D_pH(x,Du)m)=0&\text{ in }Q_T \\
m(0) = m_0, \quad u(T)=u_T &\text{ in }\T.
\end{cases}
\end{equation}

{\bf Maximal regularity for \eqref{hjb}. }The problem of maximal $L^q$-regularity for \eqref{hjb} amounts to show that bounds on the right-hand side $f$ in $L^q(Q_T)$ imply bounds on individual terms $\partial_t u$, $\Delta u$ and $H(x,Du)$ in $L^q(Q_T)$ (see Theorem \ref{maxreg1} below for a more precise statement). This problem has been proposed for stationary Hamilton-Jacobi equations by P.-L. Lions in a series of seminars (see e.g. \cite{Napoli,LionsSeminar}). Under the assumption that
\[H(x,p) =  |p|^\gamma, \qquad \gamma>1,\]
he conjectured that maximal regularity holds provided that $q$ is above the threshold $d(\gamma-1)/\gamma$. The conjecture has been proved recently in \cite{CG4} via a refined Bernstein method, which unfortunately breaks down in the parabolic setting. Here, we are able to obtain parabolic maximal regularity via different methods, assuming that $q$ is above a certain (parabolic) threshold, that is $$q \ge (d+2)\frac{\gamma-1}{\gamma} = \frac{d+2}{\gamma'}$$ in the regime of {\it sub-natural growth}, that is for $\gamma < 2$. For $\gamma \ge 2$, we obtain the larger threshold $$q>(d+2)\frac{\gamma-1}2.$$
Besides maximal regularity, we prove new results on the H\"older regularity of solutions when $\gamma > 2$.

\smallskip

Before stating our results, we briefly review some contributions on the existence and regularity of solutions to Hamilton-Jacobi equations with unbounded right-hand side. We first discuss the case $\gamma \le 2$, that is when the nonlinear term $H(Du)$ plays a mild role, being the diffusion term (formally) dominating at small scales. When $f \in L^q$, $q > (d+2)/2$, boundedness and H\"older continuity of weak solutions is classical \cite[Chapter 5]{LSU}. For such values of $q$, boundedness has actually been established for much more general quasi-linear equations, see e.g. \cite{OP}. For these problems, it was shown \cite{Posteraro, Aglio} that existence of weak (and possibly unbounded) solutions holds up to $q = (d+2)/2$. For $\gamma < 2$, namely strictly below the natural growth, the existence assumption has been relaxed to $q \ge (d+2)/\gamma'$ in the recent paper \cite{Magliocca}. Still, solutions obtained are in a weak or renormalized sense, and though they need not be bounded, the question of further regularity beyond $H(Du) \in L^1$ in the existence regime $(d+2)/\gamma' \le q < (d+2)/2$ has remained open so far. Beyond the natural growth, that is in the super-quadratic case $\gamma > 2$, existence and regularity is even less understood. Under the sign condition $f \ge 0$, it has been proven in \cite{CSil,SV} that viscosity solutions enjoy H\"older bounds depending on $\|f\|_q$, with $q > 1+d/\gamma$. It is worth mentioning that these results only rely on the super-quadratic nature of $H$, and tolerate degenerate diffusions. Below this exponent, bounds in $L^p$ spaces have been shown in \cite{CGPT} (see also \cite[Section 3.4]{Gomesbook}). It is remarkable that the exponent $1+d/\gamma$ decreases as $\gamma$ grows in the super-quadratic regime, so estimates in $L^\infty$ require milder assumptions on $f$ than in the sub-quadratic regime. On the other hand, further regularity, especially involving $Du$, seems to be difficult to achieve. For general $\gamma > 1$, Lipschitz regularity has been recently investigated in \cite{CG2}, and proven under the (optimal) assumption $q > d+2$ when $\gamma \le 3$ and larger $q$ when $\gamma > 3$. Note that whenever Lipschitz regularity is established, maximal regularity for \eqref{hjb} follows by maximal regularity for linear equations (e.g.  \cite{Lamberton, HP,DHP,PS,PrussSimonett}), being $H(Du)$ controlled in $L^\infty$. We finally mention that within the context of $L^p$-viscosity solutions, Hamilton-Jacobi equations with unbounded right-hand side have been considered, see e.g. \cite{CKS}.

Maximal regularity typically requires some mild smoothness of coefficients in the equation (diffusions with coefficients merely in $L^\infty$ are not allowed even in the linear framework), but provide integrability of $D^2 u, \partial_t u$, thus allowing to recover most of the aforementioned properties of solutions by means of Sobolev embeddings, as $q$ varies. To our knowledge, there are only a few instances of this type of results in the literature, and in the regime $\gamma \le 2$ only. In \cite{SW} maximal regularity is stated for $f\in L^q$, $q\geq d+1$ (see also \cite{S1,S2}). For Hamilton-Jacobi equations driven by the Laplacian, some results can be found in \cite{Gomesbook}, under the more general assumption $q > (d+2)/2$, but no results are available below this exponent, nor for $\gamma > 2$. 

\smallskip

As we previously announced, a first main result of this paper consists in achieving maximal regularity in the full range $q > (d+2)/\gamma'$ in the sub-quadratic setting $\gamma \le 2$, and for $q > (d+2)(\gamma-1)/2$ when $\gamma > 2$. Data will be periodic in the $x$-variable, namely functions will be defined over the $d$-dimensional flat torus $\T$. This will be convenient for the applications to MFG systems. We suppose that $H \in C(\T \times \R^d)$ is convex in the second variable, and that there exist constants $\gamma > 1$ and $C_H>0$ such that
\begin{equation}\label{H}\tag{$H$}
C_H^{-1}|p|^{\gamma}-C_H\leq H(x,p)\leq C_H(|p|^{\gamma}+1)\ ,
\end{equation}
for every $x\in \T$, $p\in\R^d$. 
 Concerning the case $\gamma \ge 2$, we will further assume some additional regularity in the $x$-variable, i.e for $\alpha \in (0,1)$ to be specified,
\begin{equation}\label{Ha}\tag{$H_\alpha$}
H(x, p) - H(x+\xi, p)  \le C_H |\xi|^\alpha \big( |D_pH(x,p)|^{\gamma'}+1 \big)
\end{equation}
for all $x,\xi \in \T$ and $p \in \R^d$. A typical example of $H$ satisfying \eqref{H} is
\[
H(x,p) = h(x)|p|^{\gamma} + b(x) \cdot p, \qquad 0 < h_0 \le h(x), \quad h, b \in C(\T).
\]
If $h \in C^\alpha(\T)$, this Hamiltonian will satisfy also \eqref{Ha}.

Hoping to help the reader to have a clearer picture, we sketch  known and new regularity regimes as $\gamma$ and $q$ vary in Figure \ref{hjbplot}.

\begin{thm}\label{maxreg1}
Assume that \eqref{H} holds, and \eqref{Ha} also when $\gamma \ge 2$ (with $\alpha$ as in Theorem \ref{mainholder} below). Let $u \in W^{2,1}_q(Q_T)$ be a strong solution to \eqref{hjb} and assume that for some $K > 0$
\[
\|f\|_{L^q(Q_T)}+\|u_0\|_{W^{2-\frac{2}{q},q}(\T)} \le K.
\]
If
\[
q > 
\begin{cases}
(d+2)\frac{\gamma-1}{\gamma} & \text{if $1 + \frac{2}{d+2} < \gamma < 2$} \medskip \\ 
(d+2)\frac{\gamma-1}{2} & \text{if $\gamma \ge 2$}
\end{cases}
\]
then, there exists a constant $C>0$ depending on $K, q, d, C_H, T$ such that
\[
\|u\|_{W^{2,1}_q(Q_T)}+\|Du\|_{L^{\gamma q}(Q_T)} \leq C.
\]
\end{thm}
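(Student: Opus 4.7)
The plan is to reduce the entire theorem to an a priori bound of the form $\|H(\cdot,Du)\|_{L^q(Q_T)}\leq C$. Once this is available, $\|Du\|_{L^{\gamma q}(Q_T)}\leq C$ is immediate from \eqref{H}, and rewriting \eqref{hjb} as the heat equation $\partial_t u-\Delta u=f-H(x,Du)\in L^q(Q_T)$ with $u_0\in W^{2-2/q,q}(\T)$, classical parabolic maximal $L^q$-regularity produces $u\in W^{2,1}_q(Q_T)$. Hence the whole proof reduces to controlling $\|H(\cdot,Du)\|_{L^q(Q_T)}$ in terms of the data.

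\textbf{Duality identity.} The core estimate I would obtain via the adjoint method \aac la Evans. For a nonnegative $\phi\in C^\infty(\T)$, let $\rho\geq 0$ solve the backward Fokker-Planck equation
\begin{equation*}
-\partial_t\rho-\Delta\rho-\dive\bigl(D_pH(x,Du)\,\rho\bigr)=0\text{ in }Q_T,\qquad \rho(T)=\phi.
\end{equation*}
Testing \eqref{hjb} against $\rho$ and integrating by parts in $Q_T$ yields
\begin{equation*}
\int_\T u(T)\phi-\int_\T u_0\,\rho(0)+\iint_{Q_T}\bigl[H(x,Du)-D_pH(x,Du)\cdot Du\bigr]\rho=\iint_{Q_T}f\rho.
\end{equation*}
The convexity of $H$ together with the two-sided growth in \eqref{H} gives $D_pH(x,Du)\cdot Du-H(x,Du)\geq c|Du|^\gamma-C$ (essentially the Legendre transform of $H$), so rearranging produces the weighted integral estimate
\begin{equation*}
\iint_{Q_T}|Du|^\gamma\rho\leq C\Bigl(\iint_{Q_T}(|f|+1)\rho+\int_\T|u(T)|\phi+\int_\T|u_0|\rho(0)\Bigr).
\end{equation*}

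\textbf{Bootstrap via adjoint regularity.} To turn this weighted bound into a genuine $L^q$-estimate on $|Du|^\gamma$, I would dualize: the left-hand side acts as a pairing $\langle|Du|^\gamma,\rho\rangle$, and taking supremum over $\phi$ in the unit ball of $L^{q'}(\T)$ transforms $L^{q'}$-bounds on $\rho$ (together with bounds on $\rho(0)$ in the dual of the initial trace space) into an $L^q$-bound on $|Du|^\gamma$. The required parabolic regularity for the adjoint, which has an unbounded divergence-form drift $b=D_pH(x,Du)\sim|Du|^{\gamma-1}$, must depend only on some lower-order integrability of $Du$. I would therefore proceed by a bootstrap: starting from the crude information given by $\phi\equiv1$ (which yields $|Du|^\gamma\in L^1$), I would progressively improve the integrability of $Du$ by combining parabolic energy/duality estimates for $\rho$ in Lebesgue scales with interpolation inequalities on $u$. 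In the super-quadratic regime $\gamma\geq 2$ the boundary terms $\int|u(T)|\phi+\int|u_0|\rho(0)$ cannot be absorbed without extra information on $u$, and this is where the H\"older estimate of Theorem \ref{mainholder} enters.

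\textbf{Main obstacle.} The most delicate point is quantifying precisely how the regularity of $\rho$ depends on the integrability of $Du$, and designing the bootstrap so that the exponent improves at each step without losing the borderline integrability. The two thresholds $q>(d+2)/\gamma'$ and $q>(d+2)(\gamma-1)/2$ should correspond exactly to the critical parabolic Sobolev exponents above which the adjoint estimate is strong enough to close the iteration, and handling the adjoint equation with unbounded drift in the Lebesgue (or Morrey) scale near these thresholds, simultaneously with the use of H\"older regularity in the super-quadratic case, is the principal technical difficulty I would expect.
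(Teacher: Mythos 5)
Your high-level ingredients are the right ones (adjoint method, interpolation, linear parabolic maximal regularity), but the way you propose to assemble them contains a gap, and the mechanism you describe is not the one that actually closes the argument.

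The gap is in the step ``taking supremum over $\phi$ in the unit ball of $L^{q'}(\T)$ transforms $L^{q'}$-bounds on $\rho$ into an $L^q$-bound on $|Du|^\gamma$.'' The weighted inequality $\iint_{Q_T}|Du|^\gamma\rho\le C(\iint(|f|+1)\rho+\cdots)$ cannot be dualized in this way: $\rho$ is not a free test function in a ball of $L^{q'}(Q_T)$, it is the particular solution of an adjoint Fokker--Planck equation whose drift $D_pH(x,Du)$ depends on $Du$ itself. As $\phi$ varies over the unit ball of $L^{q'}(\T)$, the resulting family $\{\rho\}$ does not exhaust the unit ball of $L^{q'}(Q_T)$, so no $L^q(Q_T)$-bound on $|Du|^\gamma$ falls out. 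Moreover, the $L^{q'}$-bound on $\rho$ given by the adjoint regularity (Corollary~\ref{corrho}) is itself controlled by $\iint|D_pH(x,Du)|^{(d+2)/q}\rho$, which is another weighted integral of a power of $|Du|$; so the scheme is nonlinear and circular, and your vaguely described ``progressive improvement of integrability of $Du$'' has no clear reason to converge.

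What the paper actually does is different and avoids this circularity. The duality is used, not to estimate $\iint|Du|^\gamma\rho$ directly, but to estimate $\int_\T u(\tau)\rho(\tau)\,dx$: choosing $\rho(\tau)$ to be the normalized extremal function for the $L^p$-norm of $u(\tau)$ (Lemmas~\ref{pboundP+}--\ref{pboundP-}, Corollary~\ref{pboundP}), or a translate of a mass density (Theorem~\ref{mainholder}), produces bounds $\sup_t\|u(t)\|_{L^p(\T)}\le C$ in the subquadratic case and $\sup_t\|u(t)\|_{C^\alpha(\T)}\le C$ in the superquadratic case. The term $\iint|D_pH|^{\gamma'}\rho$ that appears is only used once, via Young's inequality against the $L^{q'}$-bound on $\rho$, to absorb the $f$-term; it is never promoted to a space-time $L^q$-estimate. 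Then — and this is the step missing from your proposal — one interpolates $Du(t)$ \emph{in the spatial variable alone}, between $W^{2,q}(\T)$ and $L^s(\T)$ (Gagliardo--Nirenberg) or $C^\alpha(\T)$ (Miranda--Nirenberg), in the form $\|Du(t)\|_{L^{\gamma q}(\T)}\le C\|u(t)\|_{W^{2,q}(\T)}^\theta\|u(t)\|_{X}^{1-\theta}$. Integrating the $\gamma q$-th power in time and using the $\sup_t$-bound on $\|u(t)\|_X$ yields $\|Du\|_{L^{\gamma q}(Q_T)}\le C\|u\|_{L^q(W^{2,q})}^\theta$, and substituting into the linear maximal-regularity estimate \eqref{pertheat} gives $\|u\|_{W^{2,1}_q}\le C(\|u\|_{W^{2,1}_q}^{\gamma\theta}+\cdots)$. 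The two thresholds on $q$ in the theorem are precisely the conditions under which $\theta$ can be chosen so that $\gamma\theta<1$, which is what makes this last inequality self-improving and closes the estimate in a single step, without any iterative bootstrap.
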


The strategy of the proof is based on the following procedure. By maximal regularity for linear equations, one has
\[
\|D^2 u\|_{L^q} \lesssim \|H(Du)\|_{L^q} +\|f\|_{L^q} + \|u_0\|_{W^{2-2/q,q}} \lesssim \|Du\|^\gamma_{L^{\gamma q}} +\|f\|_{L^q} + \|u_0\|_{W^{2-2/q,q}}.
\]
Then, one looks for a suitable norm $\vertiii{\cdot}$ so that a Gagliardo-Nirenberg type interpolation inequality of the form
\[
\|Du\|^\gamma_{L^{\gamma q}} \lesssim \|D^2 u \|_{L^q}^{\gamma \theta} \ \vertiii{u}^{\gamma(1-\theta)}
\]
with $\gamma \theta < 1$ holds. Combining the two inequalities, maximal regularity is achieved whenever it is possible to produce bounds on $\vertiii{u}$. This way of producing estimates for nonlinear problems, using linear estimates and interpolation inequalities, goes back to the works of Amann and Crandall \cite{AC}. When $\gamma = 2$, a good choice for $\vertiii{\cdot}$ is the $L^\infty$ norm, which is easily controlled by $\|f\|_\infty$ (ABP type estimates allow to reach $\|f\|_{d+1}$, as in \cite{SW}).  Here, we start with the observation that when $\gamma < 2$, the optimal choice is a suitable $L^p$ norm, while for $\gamma > 2$, a H\"older bound on $u$ is needed. A crucial step in this work is the derivation of such estimates. These are obtained by duality arguments, inspired by \cite{Evans,CG2}. The main idea behind duality is to shift the attention from \eqref{hjb} to the formal adjoint of its linearization, which is a Fokker-Planck equation. Crossed information on $u$ and the solution $\rho$ to the ``dual'' equation allow to retrieve estimates on $\rho$, that are then transferred back to $u$. A more detailed heuristic explanation of this method can be found in \cite{CG2} (see also references therein). Note that \cite{CG2} is devoted to Lipschitz regularity of $u$, while here we investigate H\"older regularity, which requires a further inspection of the regularity of $\rho$ at the level of Nikol'skii spaces.

We underline that our results on H\"older regularity in the super-quadratic case are new in the following sense. Recall that H\"older bounds have been obtained in \cite{CSil,SV} when $q > 1+ d/\gamma$, while here we assume the stronger requirement $q > (d+2)/\gamma'$ (which is the natural one for maximal regularity). In \cite{CSil,SV} sign assumptions on $f$ are in force, and explicit H\"older exponents are not provided. Here, we do not require any assumption on the sign of $f$, and produce explicit H\"older exponents. The statement is as follows.

\begin{thm}\label{mainholder}
Assume that \eqref{H} and \eqref{Ha} hold, with $\gamma \ge 2$. Let $u$ be a strong solution to \eqref{hjb} in $W^{2,1}_q(Q_T)$, $q > \frac{d+2}{\gamma'}$. Then, there exists a positive constant ${C}$ (depending on $\|u_0\|_{C^\alpha(\T)}, \|f\|_{L^{q}(Q_T)}, H, q, d, T$) such that
\begin{equation}\label{abound}
\sup_{t \in [0,T]}\|u(t)\|_{C^\alpha(\T)} \le C,
\end{equation}
where $\alpha = \gamma'-\frac{d+2}{q}$ if $q < \frac{d+2}{\gamma'-1}$, while $\alpha \in (0,1)$ if $q \ge \frac{d+2}{\gamma'-1}$.
\end{thm}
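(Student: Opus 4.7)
The plan is to bound $u(x_0,\tau)-u(x_0+\xi,\tau)$ from above uniformly in $x_0\in\T$ and small $\xi\in\R^d$; applying the resulting estimate also with $-\xi$ in place of $\xi$ then gives the two-sided H\"older bound. Set $v(x,t):=u(x+\xi,t)$, $w:=u-v$, and $b(x,t):=D_pH(x,Dv(x,t))$. Subtracting the equations for $u$ and $v$, using the convexity of $H(x,\cdot)$ in the form $H(x,Du)-H(x,Dv)\ge b\cdot Dw$, and invoking \eqref{Ha} (applied with the roles of $x$ and $x+\xi$ swapped) to control $H(x+\xi,Dv)-H(x,Dv)$, we arrive at the linear parabolic inequality
\[
\partial_t w-\Delta w+b\cdot Dw\le |f(x,t)-f(x+\xi,t)|+C|\xi|^\alpha\bigl(1+|D_pH(x+\xi,Dv)|^{\gamma'}\bigr).
\]

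Following the duality method of Evans (and as in \cite{CG2}), introduce $\rho\ge 0$ solving the backward Fokker--Planck equation $-\partial_t\rho-\Delta\rho-\dive(b\rho)=0$ on $Q_\tau$, with smooth nonnegative terminal datum $\rho_\tau$ of unit mass approximating $\delta_{x_0}$; mass conservation yields $\int_\T\rho(\cdot,t)\,dx=1$ for all $t\le\tau$. Testing the differential inequality for $w$ against $\rho$, integrating by parts, and passing to the limit $\rho_\tau\to\delta_{x_0}$ (legitimate because $u(\cdot,\tau)$ is continuous by Sobolev embedding) yields
\[
w(x_0,\tau)\le\int_\T w(x,0)\,\rho(x,0)\,dx+\int_0^\tau\!\!\int_\T\rho\Bigl[|\Delta_\xi f|+C|\xi|^\alpha\bigl(1+|D_pH(x+\xi,Dv)|^{\gamma'}\bigr)\Bigr]dx\,dt,
\]
with $\Delta_\xi f(x,t):=f(x+\xi,t)-f(x,t)$. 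The initial-data term is bounded by $[u_0]_{C^\alpha}|\xi|^\alpha$. The term carrying $|D_pH|^{\gamma'}$ is controlled by applying the same duality identity to $v$ itself: the Legendre relation gives $b\cdot Dv-H(x,Dv)=L(x,b)\gtrsim|b|^{\gamma'}-C$, which combined with the $L^\infty$-bound on $v$ (available under our assumption $q>(d+2)/\gamma'\ge 1+d/\gamma$ for $\gamma\ge 2$, see \cite{CSil,SV}) and with a uniform $L^{q'}(Q_\tau)$-bound on $\rho$ (which holds since $\rho$ is pointwise dominated by a heat-kernel-type profile and $q>(d+2)/\gamma'\ge (d+2)/2$ makes this profile integrable in time) yields $\int_0^\tau\!\int_\T|D_pH(x+\xi,Dv)|^{\gamma'}\rho \le C$, after absorbing a perturbative $O(|\xi|^\alpha)$ contribution.

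The remaining data-oscillation term is handled by moving the finite difference onto $\rho$: integration by parts and H\"older's inequality give
\[
\int_0^\tau\!\!\int_\T\rho\,|\Delta_\xi f|\,dx\,dt\le\|f\|_{L^q(Q_\tau)}\,\bigl\|\rho(\cdot+\xi,\cdot)-\rho\bigr\|_{L^{q'}(Q_\tau)}.
\]
The technical heart of the proof is then the Nikol'skii-type estimate
\[
\bigl\|\rho(\cdot+\xi,\cdot)-\rho\bigr\|_{L^{q'}(Q_\tau)}\le C|\xi|^\alpha,
\]
uniform in the mollification $\rho_\tau\to\delta_{x_0}$. This is obtained by interpolating between the trivial $L^{q'}$-bound $\|\rho(t)\|_{L^{q'}}\lesssim(\tau-t)^{-d/(2q)}$ and a spatial-gradient bound $\|D\rho(t)\|_{L^{q'}}\lesssim(\tau-t)^{-d/(2q)-1/2}$, both of which follow from parabolic $L^p$-estimates for the Fokker--Planck equation with the drift $b\in L^{\gamma'q}(Q_T)$ (coming from $Du\in L^{\gamma q}(Q_T)$), and then integrating in time on $(0,\tau)$. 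The integrability condition $\alpha<2-(d+2)/q$ is compatible with $\alpha=\gamma'-(d+2)/q$ since $\gamma'\le 2$ for $\gamma\ge 2$, and with any $\alpha\in(0,1)$ once $q\ge(d+2)/(\gamma'-1)$. Assembling all pieces produces $w(x_0,\tau)\le C|\xi|^\alpha$, which proves \eqref{abound}. The main obstacle is precisely this Nikol'skii regularity estimate on $\rho$: it requires sharp quantitative parabolic control on Fokker--Planck equations with a drift of only limited Lebesgue integrability, together with robustness with respect to the singular limit of the terminal datum, which goes beyond the standard $L^p$-theory used in the earlier Lipschitz analysis of \cite{CG2}.
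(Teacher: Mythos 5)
Your overall strategy — duality against the adjoint Fokker--Planck equation, shifting by $\xi$, using the Legendre structure of $H$ together with \eqref{Ha}, and finally a H\"older/Nikol'skii-type estimate on $\rho$ to handle the $f$-oscillation term — is structurally the same as the paper's. However, the step you correctly flag as ``the technical heart'' of the argument, namely the bounds on $\rho$ in $L^{q'}(Q_\tau)$ and the Nikol'skii estimate $\|\rho(\cdot+\xi,\cdot)-\rho\|_{L^{q'}(Q_\tau)}\lesssim|\xi|^\alpha$, is where your proposal breaks down, and in a way that would make the conclusion useless for its intended application.

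You propose to get these bounds from heat-kernel-type pointwise domination of $\rho$ and parabolic $L^p$-estimates for the drift $b=D_pH(\cdot,Dv)$, noting that $b\in L^{\gamma'q}(Q_T)$ with $\gamma'q>d+2$. The problem is that any such Aronson/De Giorgi-type bound on the fundamental solution, or any $L^p$-parabolic estimate for $D\rho$, carries constants that depend on $\|b\|_{L^{\gamma'q}(Q_T)}$, hence on $\|Du\|_{L^{\gamma q}(Q_T)}$, hence on $\|u\|_{W^{2,1}_q(Q_T)}$. But the statement of Theorem~\ref{mainholder} asserts that $C$ depends only on $\|u_0\|_{C^\alpha}$, $\|f\|_{L^q}$, $H$, $q$, $d$, $T$, and this independence of $\|u\|_{W^{2,1}_q}$ is essential: Theorem~\ref{mainholder} is then fed into the proof of Theorem~\ref{maxreg1} precisely to close an a priori bound on $\|u\|_{W^{2,1}_q}$. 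With your route the argument becomes circular. The same objection already applies to your earlier step where you need a ``uniform $L^{q'}(Q_\tau)$-bound on $\rho$'' to control $\iint|D_pH(Dv)|^{\gamma'}\rho$: pointwise heat-kernel domination is simply not available with constants depending only on the data.

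What the paper does instead, and what your proposal is missing, is Proposition~\ref{new2} (and Corollary~\ref{corrho}): a weak-solution / De Giorgi argument for the Fokker--Planck equation that bounds $\|\rho\|_{\H^1_{\sigma'}(Q_\tau)}$ and $\|\rho\|_{L^{q'}(Q_\tau)}$ not by $\|b\|_{L^\sP}$, but by the much weaker crossed quantity $\iint|b|^{m'}\rho\,dx\,dt$ with $m'=\frac{d+2}{q}<\gamma'$, plus $\|\rho_\tau\|_{L^1}=1$. That crossed quantity is then controlled by testing the Hamilton--Jacobi equation against $\rho$ itself (the Legendre estimate) together with the $L^\infty$-bound on $u$ from Corollary~\ref{pboundP}, all of which depend only on the data. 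The space-Nikol'skii regularity of $\rho$ you need then comes for free from the embedding $\H^1_{\frac{d+2}{d+3-\gamma'}}(Q_\tau)\hookrightarrow L^{q'}(0,\tau;W^{\alpha,q'}(\T))\hookrightarrow L^{q'}(0,\tau;N^{\alpha,q'}(\T))$ established in Propositions~\ref{emb1} and~\ref{embnik}, with exactly $\alpha=\gamma'-\frac{d+2}{q}$, and requires no singular-kernel analysis or uniformity with respect to a $\delta_{x_0}$-limit. A secondary remark: when you move the difference onto $\rho$, you should keep the signed difference $f(x,t)-f(x+\xi,t)$ rather than $|\Delta_\xi f|$; the absolute value blocks the change of variables. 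Also, for the $L^\infty$-bound on $u$ you can use Corollary~\ref{pboundP} directly rather than \cite{CSil,SV}, which impose a sign condition on $f$ that is not assumed here.
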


We mention that the proof of the H\"older bounds could be localized in time, thus assuming merely $u_0 \in C(\T)$. The constant $C$ will then depend on $\|u\|_\infty$, see Remark \ref{holder2}.
\smallskip

In the sub-quadratic regime $\gamma < 2$, the existence and uniqueness of weak solutions can be obtained up to the critical integrability exponent $q = (d+2)\frac{\gamma-1}{\gamma}$  \cite{Magliocca}. From the viewpoint of maximal regularity, this endpoint situation is a bit more delicate than the one treated in Theorem \ref{maxreg1}, which concerns $q > (d+2)\frac{\gamma-1}{\gamma}$. Indeed, the heuristic procedure previously discussed yields
\[
\|D^2 u\|_{L^q} \lesssim \|D^2 u \|_{L^q} \ \|u\|_{L^p}^{\gamma-1} +\|f\|_{L^q} + \|u_0\|_{W^{2-2/q,q}},
\]
which is meaningful only if $ \|u\|_{L^p}$ is small. We circumvent this issue by shifting the analysis from $u$ to $u-u_k$, where $u_k$ is the solution to a suitable regularized problem. Crucial stability estimates on $ \|u-u_k\|_{L^p}$ then lead to the second main maximal regularity result of the paper.

\begin{thm}\label{maxregc}
Assume that \eqref{H} holds, and $1 + \frac{2}{d+2} < \gamma < 2$. Let $u \in W^{2,1}_q(Q_T)$ be a strong solution to \eqref{hjb}, and
\[
q = (d+2)\frac{\gamma-1}{\gamma}.
\]
Then, there exists a constant $C>0$ depending on $f, \|u_0\|_{W^{2-2/q,q}}, q, d, C_H, T$ such that
\[
\|u\|_{W^{2,1}_q(Q_T)}+\|Du\|_{L^{\gamma q}(Q_T)} \leq C.
\]
\end{thm}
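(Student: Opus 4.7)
The strategy is to rerun the argument of Theorem~\ref{maxreg1} on the difference $w_k := u - u_k$ rather than on $u$ itself, the extra flexibility being used to generate the smallness required to absorb the critical interpolation term. At $q = (d+2)(\gamma-1)/\gamma$ the parabolic Gagliardo--Nirenberg inequality attains the sharp value $\gamma\theta = 1$, so combined with linear maximal regularity it produces the inequality from the excerpt,
\[
\|D^2 u\|_{L^q} \lesssim \|D^2 u\|_{L^q}\,\|u\|_{L^p}^{\gamma-1} + \|f\|_{L^q} + \|u_0\|_{W^{2-2/q,q}},
\]
with the critical exponent $p$ prescribed by scaling; this can be closed only when $\|u\|_{L^p}^{\gamma-1}$ is small.

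The regularized solution $u_k$ is obtained by picking $f_k \to f$ in $L^q(Q_T)$ with $f_k$ more integrable (e.g.\ the truncation $f_k := (-k)\vee(f\wedge k)$, so $f_k \in L^{q_k}$ for any $q_k > q$) and letting $u_k \in W^{2,1}_{q_k}(Q_T)$ be the corresponding strong solution, existing via Theorem~\ref{maxreg1}. The crucial input is then the \emph{stability estimate} $\|u - u_k\|_{L^p(Q_T)} \to 0$ as $k \to \infty$. To prove it, observe that $w_k$ satisfies the linear transport-diffusion equation
\[
\partial_t w_k - \Delta w_k + b_k \cdot D w_k = f - f_k \quad \text{in } Q_T, \qquad w_k(0) = 0,
\]
with $b_k(x,t) := \int_0^1 D_p H\bigl(x, s Du + (1-s) Du_k\bigr)\,ds$. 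The natural approach is to control $\|w_k\|_{L^p}$ by duality: given $\phi \in L^{p'}(Q_T)$, let $\rho$ solve the adjoint Fokker--Planck equation $-\partial_t \rho - \Delta \rho - \dive(b_k \rho) = \phi$ with $\rho(T) = 0$, so that $\int_{Q_T} w_k\,\phi = \int_{Q_T}\rho\,(f-f_k)$ and the task reduces to bounding $\|\rho\|_{L^{p'}}$. By \eqref{H} one has $|b_k| \lesssim |Du|^{\gamma-1} + |Du_k|^{\gamma-1}$, and the a priori integrability $Du \in L^{\gamma q}$ gives $b_k \in L^{\gamma q/(\gamma-1)}(Q_T) = L^{d+2}(Q_T)$ at the critical $q$: precisely the borderline space for Fokker--Planck regularity. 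This is the core technical obstacle, which I would handle via the adjoint method in the spirit of \cite{Evans, CG2}, possibly introducing further regularizations of $b_k$ to circumvent the critical drift integrability.

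With the stability in hand, I close the argument by applying the scheme of Theorem~\ref{maxreg1} to $w_k$. Linear maximal regularity for the equation of $w_k$, combined with the bound $\|H(x,Du) - H(x,Du_k)\|_{L^q} \lesssim \|Du\|_{L^{\gamma q}}^\gamma + \|Du_k\|_{L^{\gamma q}}^\gamma + 1$ coming from \eqref{H}, the splitting $\|Du\|_{L^{\gamma q}} \le \|Dw_k\|_{L^{\gamma q}} + \|Du_k\|_{L^{\gamma q}}$, and the critical Gagliardo--Nirenberg interpolation $\|Dw_k\|_{L^{\gamma q}}^\gamma \lesssim \|w_k\|_{W^{2,1}_q(Q_T)}\|w_k\|_{L^p}^{\gamma-1}$, yields
\[
\|w_k\|_{W^{2,1}_q(Q_T)} \le C\|w_k\|_{W^{2,1}_q(Q_T)}\|w_k\|_{L^p(Q_T)}^{\gamma-1} + C\bigl(\|f-f_k\|_{L^q} + \|Du_k\|_{L^{\gamma q}}^\gamma + 1\bigr).
\]
Choosing $k$ large enough, depending only on the data $(f,u_0)$, so that $C\|w_k\|_{L^p}^{\gamma-1} \le 1/2$ (possible by the stability step), the first term on the right is absorbed. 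Combining with the triangle inequality $\|u\|_{W^{2,1}_q} \le \|w_k\|_{W^{2,1}_q} + \|u_k\|_{W^{2,1}_q}$ and the fact that $\|u_k\|_{W^{2,1}_q} < \infty$ for the fixed $k$ (by Theorem~\ref{maxreg1} at the supercritical exponent $q_k > q$), the desired bound on $\|u\|_{W^{2,1}_q(Q_T)}$ follows; the bound on $\|Du\|_{L^{\gamma q}}$ is then a consequence of the parabolic Sobolev embedding.
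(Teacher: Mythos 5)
Your high-level strategy is the same as the paper's: regularize the data, study $w=u-u_k$, and use the smallness of $\|w\|_{L^p}$ to absorb the critical Gagliardo--Nirenberg term $\|w\|_{W^{2,1}_q}\|w\|_{L^p}^{\gamma-1}$. The closing argument (linear maximal regularity for $w$, the bound $|H(x,Du_k)-H(x,Du)|\lesssim|Du_k|^\gamma+|Dw|^\gamma+1$, choosing $k$ large and invoking Theorem~\ref{maxreg1} for $u_k$) is also essentially correct and matches the paper, modulo the minor point that in the paper the initial datum $u_0$ is also mollified, so $w(0)\neq 0$; this is harmless but should be tracked.

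The genuine gap is precisely the step you yourself flag: the stability estimate $\|w\|_{L^p}\to 0$. Writing the equation for $w$ in divergence/transport form with the interpolated drift $b_k=\int_0^1 D_pH(x,sDu+(1-s)Du_k)\,ds$ and dualizing against a Fokker--Planck adjoint with source $\phi\in L^{p'}$ leads exactly to the borderline drift space $L^{d+2}(Q_T)$, and there is no bound on $\|\rho\|_{L^{p'}}$ depending only on $\|b_k\|_{L^{d+2}}$ that is uniform in $k$ --- the constant degenerates as the norm grows, so ``further regularizations of $b_k$'' do not rescue the argument. The paper sidesteps this by never using the interpolated drift. Instead it exploits \emph{convexity} of $H$ to show that $w^+$ is a weak subsolution of the linear equation with drift $D_pH(x,Du_k)$ and $w^-$ of the one with drift $D_pH(x,Du)$, and pairs each with the corresponding Fokker--Planck adjoint having terminal data $\propto (w^\pm(\tau))^{p-1}$ (Proposition~\ref{stability}). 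Crucially, the bound from Corollary~\ref{corrho} controls $\|\rho\|_{L^{q'}}$ and $\|\rho(0)\|_{L^{p'}}$ by the \emph{weighted} quantity $\iint |D_pH(\cdot,Du_k)|^{\gamma'}\rho$, and this quantity is in turn bounded by dualizing the Hamilton--Jacobi equation for $u_k$ itself against $\rho$ and using the Legendre transform lower bound $L\geq C_L^{-1}|D_pH|^{\gamma'}-C_L$. That two-stage duality is the missing ingredient in your proposal, and it cannot be replaced by a direct Aronson--Serrin estimate at the critical exponent.

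A second, subtler point you miss concerns the nature of the constant. At $q=(d+2)(\gamma-1)/\gamma$ the constant $C$ cannot depend on $f$ and $u_0$ only through $\|f\|_{L^q}$ and $\|u_0\|_{W^{2-2/q,q}}$: it depends on the tail behaviour $\|f^-\chi_{\{f^-\geq h\}}\|_{L^q}$ and $\|u_0^-\chi_{\{u_0^-\geq h_0\}}\|_{L^p}$ through the choice of truncation level in the stability proof (cf.\ Remark~\ref{cdep} and Remark~\ref{remlim}), i.e.\ on equi-integrability properties rather than norms. Your proposal, which treats the stability step as a black-box smallness assertion, does not surface this dependence, which is actually stated in Theorem~\ref{maxregc} (the constant depends ``on $f$'', not on $\|f\|_{L^q}$).
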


We stress that in this limiting case the dependance of the constant $C$ with respect to $f$  is not just through its norm, as in the previous Theorem \ref{maxreg1}, but on subtler properties; see Remark \ref{remlim} below for additional details.

Further comments concerning the thresholds for $q$ in Theorems \ref{maxreg1} and  \ref{maxregc} are now in order. 
When $q < (d+2)/\gamma'$ we believe that maximal regularity results are false in general. This would be in line with known results for the associated stationary problem \cite{CG4}, for which maximal regularity does not hold when $q < d/\gamma'$. We also believe that the endpoint $q=\frac{d+2}2$ for quadratic problems $\gamma = 2$ could be treated by our methods, using more refined interpolation estimates in Orlicz spaces (or the analysis of a linear equation obtained via the Hopf-Cole transformation). Regarding the super-quadratic regime $\gamma > 2$, we do not know whether our assumption $q > (d+2)(\gamma-1)/2$ is optimal or not.

It is finally worth noticing that our results, and in particular the H\"older estimates in Theorem \ref{mainholder}, apply to equations with repulsive gradient term (e.g. Kadar-Parisi-Zhang type PDEs), i.e.
\[
\partial_t v-\Delta v=G(x,Dv(x,t))-f(x,t)
\]
with $G$ satisfying \eqref{H}. In other words, the sign in front of $H$ and $f$ does not matter in \eqref{hjb}, since it is sufficient to observe that $u(x,t)=-v(x,t)$ solves \eqref{hjb} with $H(x,p)=G(x,-p)$, which satisfies \eqref{H} too. We refer to \cite{AP, baSW} and the references therein for further discussions on these equations.

\medskip 

{\bf Mean Field Games. } Armed with maximal regularity results for Hamilton-Jacobi equations, we describe our contributions on Mean Field Games (MFG) systems of the form \eqref{mfgdef}. These arise in the MFG framework, which is a class of methods inspired by ideas in statistical physics to study differential games with a population of infinitely many indistinguishable players \cite{ll,lltime}. The PDE system \eqref{mfgdef} appears naturally when the running cost of a single player depends on the population's density in a pointwise manner. This results in the nonlinear coupling term $g(m(x,t))$ between the Hamilton-Jacobi and the Fokker-Planck equation in \eqref{mfgdef}. When $g(\cdot)$ is an unbounded function, regularity of solutions is still a challenging problem that has not been solved in full generality. 

\begin{figure}
\centering
\def\svgwidth{.8\columnwidth}
\begingroup%
  \makeatletter%
  \providecommand\color[2][]{%
    \errmessage{(Inkscape) Color is used for the text in Inkscape, but the package 'color.sty' is not loaded}%
    \renewcommand\color[2][]{}%
  }%
  \providecommand\transparent[1]{%
    \errmessage{(Inkscape) Transparency is used (non-zero) for the text in Inkscape, but the package 'transparent.sty' is not loaded}%
    \renewcommand\transparent[1]{}%
  }%
  \providecommand\rotatebox[2]{#2}%
  \newcommand*\fsize{\dimexpr\f@size pt\relax}%
  \newcommand*\lineheight[1]{\fontsize{\fsize}{#1\fsize}\selectfont}%
  \ifx\svgwidth\undefined%
    \setlength{\unitlength}{569.25961953bp}%
    \ifx\svgscale\undefined%
      \relax%
    \else%
      \setlength{\unitlength}{\unitlength * \real{\svgscale}}%
    \fi%
  \else%
    \setlength{\unitlength}{\svgwidth}%
  \fi%
  \global\let\svgwidth\undefined%
  \global\let\svgscale\undefined%
  \makeatother%
  \begin{picture}(1,0.39015309)%
    \lineheight{1}%
    \setlength\tabcolsep{0pt}%
    \put(0,0){\includegraphics[width=\unitlength,page=1]{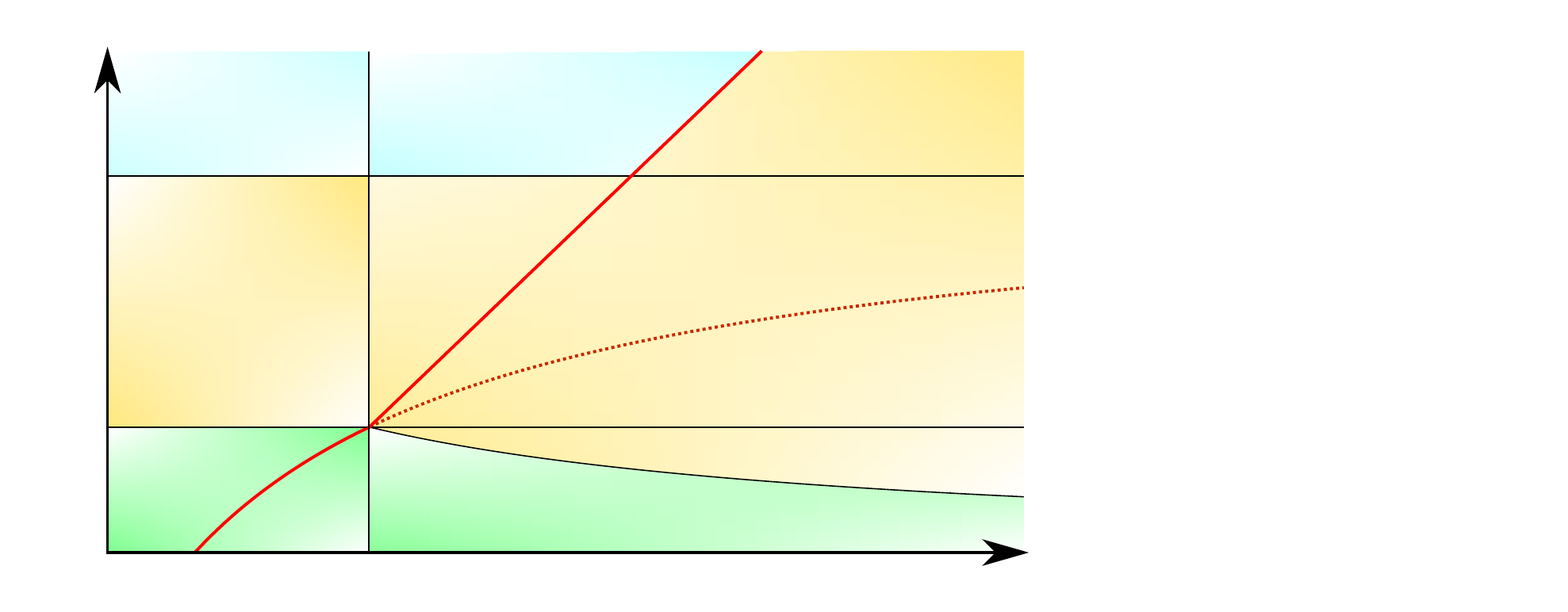}}%
    \put(0.05078519,0.00318598){\color[rgb]{0,0,0}\makebox(0,0)[lt]{\lineheight{1.25}\smash{\begin{tabular}[t]{l}$1$\end{tabular}}}}%
    \put(0.22665792,0.00326416){\color[rgb]{0,0,0}\makebox(0,0)[lt]{\lineheight{1.25}\smash{\begin{tabular}[t]{l}$2$\end{tabular}}}}%
    \put(0.6624429,0.06605718){\color[rgb]{0,0,0}\makebox(0,0)[lt]{\lineheight{1.25}\smash{\begin{tabular}[t]{l}$1+\frac{d}{\gamma}$\end{tabular}}}}%
    \put(0.66322217,0.19808974){\color[rgb]{0,0,0}\makebox(0,0)[lt]{\lineheight{1.25}\smash{\begin{tabular}[t]{l}$(d+2)\frac{\gamma-1}{\gamma}$\end{tabular}}}}%
    \put(0.48208806,0.37522273){\color[rgb]{0,0,0}\makebox(0,0)[lt]{\lineheight{1.25}\smash{\begin{tabular}[t]{l}$(d+2)\frac{\gamma-1}{2}$\end{tabular}}}}%
    \put(0.64703665,0.00953545){\color[rgb]{0,0,0}\makebox(0,0)[lt]{\lineheight{1.25}\smash{\begin{tabular}[t]{l}$\gamma$\end{tabular}}}}%
    \put(0.0589434,0.37624423){\color[rgb]{0,0,0}\makebox(0,0)[lt]{\lineheight{1.25}\smash{\begin{tabular}[t]{l}$q$\end{tabular}}}}%
    \put(0.00455607,0.11204171){\color[rgb]{0,0,0}\makebox(0,0)[lt]{\lineheight{1.25}\smash{\begin{tabular}[t]{l}$\frac{d+2}2$\end{tabular}}}}%
    \put(-0.01190795,0.27203785){\color[rgb]{0,0,0}\makebox(0,0)[lt]{\lineheight{1.25}\smash{\begin{tabular}[t]{l}$d+2$\end{tabular}}}}%
  \end{picture}%
\endgroup%
\caption{\footnotesize A sketch of estimates that are known for solutions to \eqref{hjb}, as $\gamma$ and $q$ vary: $L^p$ (green region), H\"older (orange region), Lipschitz (light blue region). Above the solid red line we prove here maximal regularity estimates. Above the dashed red line we prove here new H\"older estimates.}\label{hjbplot}
\end{figure}

\smallskip

In the so-called {\it monotone} case, that is when $g(\cdot)$ is increasing, there are basically no restrictions on the growth of $f$ if one looks for solutions in some weak sense \cite{CGPT, Porr}, or for classical solutions when the time horizon $T$ is small \cite{Ambrose, CGM}. Classical solutions for arbitrary $T$ can be obtained by requiring a mild behaviour of $g(m)$ as $m \to \infty$, or a mild behaviour of $H(p)$, i.e. $\gamma \le 1 + 1/(d+1)$ (as suggested in \cite{LionsSeminar}). We deal here with the first situation. For the model problem $g(m) = m^r$, an upper bound on $r$ depending on $\gamma$ and the space dimension $d$ is usually required. Concerning the subquadratic case $\gamma < 2$, a main reference is \cite{Gomessub} (see also \cite{BBF}), while \cite{Gomessup} explores the superquadratic case $2 < \gamma < 3$. 

We will assume here that $f:\T\times[0,\infty)\to\R$ is of class $C^1$, and that there exist $r>0$ and $C_g > 0$ such that
\begin{equation}\label{fde}\tag{$M^+$}
C^{-1}_g m^{r-1}  \le g'(m) \le C_g(m^{r-1} + 1) \qquad \forall m \ge 0.
\end{equation}
This implies that $g(\cdot)$ is monotone increasing, and bounded from above and below by power-like functions of type $m^r$. As for the Hamiltonian, we will further assume some additional smoothness and uniform convexity in the $p$-variable, having always in mind a power-like growth $|p|^\gamma$: for every $p \in \R^d, M\in\mathrm{Sym}_d$
\begin{equation}\label{H2}\tag{$H2$}
\begin{gathered}
\mathrm{Tr}(D^2_{pp}H(x,p)M^2) \geq C_H^{-1}(1+|p|^2)^{\frac{\gamma-2}2}|M|^2 -C_H, \\
|D^2_{px}H(x,p)| \leq C_H(|p|^{\gamma-1} + 1), \\
|D^2_{xx}H(x,p)| \leq C_H (|p|^\gamma + 1).
\end{gathered}
\end{equation}

Our main result on this class of problems reads as follows.

\begin{thm}\label{mfg1}
Under the assumptions \eqref{H}, \eqref{H2} and \eqref{fde}, there exists a (unique) smooth solution to \eqref{mfgdef} if
\begin{equation}
r < 
\begin{cases}
\frac{\gamma'}{d-2}\frac{d}{(d+2-\gamma')} & \text{if $1 + \frac{1}{d+1} < \gamma \le 2$} \medskip \\ 
\frac{2}{d(\gamma-1) - 2} & \text{if $\gamma \ge 2$}.
\end{cases}
\end{equation}
\end{thm}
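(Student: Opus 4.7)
The plan is to reduce Theorem \ref{mfg1} to suitable a priori estimates via standard MFG techniques — a Leray-Schauder fixed point for existence and the Lasry-Lions monotonicity identity for uniqueness — and to concentrate the main effort on bootstrapping $\|m\|_{L^\sigma(Q_T)}$ up to an exponent $\sigma$ large enough that $g(m)$ lies above the threshold of Theorem \ref{maxreg1}, after which classical regularity propagates by Sobolev embedding and parabolic Schauder theory.

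First I would derive a base energy bound by testing the HJ equation against $m$, the FP equation against $u$, subtracting and integrating by parts on $Q_T$. Exploiting the superlinearity of $H$ together with \eqref{fde}, this produces
\[
\|m\|_{L^{r+1}(Q_T)}^{r+1} + \iint_{Q_T} |Du|^{\gamma}\, m \;\le\; C,
\]
with $C$ depending only on the data. Next comes the bootstrap: assuming $\|m\|_{L^\sigma(Q_T)} \le C$, \eqref{fde} gives $\|g(m)\|_{L^{\sigma/r}} \le C$ and, whenever $\sigma/r$ exceeds the threshold of Theorem \ref{maxreg1}, that theorem yields $u \in W^{2,1}_{\sigma/r}(Q_T)$ with $Du \in L^{\gamma \sigma/r}(Q_T)$. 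By \eqref{H2}, the drift $b := D_pH(x,Du)$ lies in $L^{\gamma'\sigma/r}(Q_T)$, and parabolic regularity for the Fokker-Planck equation $\partial_t m - \Delta m = \mathrm{div}(b\,m)$ with such a drift — treated by duality against a backward linear parabolic equation with $L^s$-drift — delivers an improved bound $\|m\|_{L^{\tilde\sigma}} \le C$. The assumption on $r$ in the statement is designed precisely so that the map $\sigma \mapsto \tilde\sigma$ is strictly expansive; a finite iteration thus drives $\sigma/r$ above $d+2$, at which point $Du \in L^\infty$ and classical parabolic Schauder theory promotes $(u,m)$ to smooth solutions of class $C^{2+\alpha, 1+\alpha/2}$. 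When $\gamma \ge 2$, the H\"older estimate of Theorem \ref{mainholder} is invoked inside the iteration to pin down the regularity of $u$ and feed the FP step.

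With the a priori bound in hand, existence follows from a Leray-Schauder fixed point applied to the map $\bar m \mapsto m$, obtained by first solving HJ with source $g(\bar m)$ and then the linear FP with drift $D_pH(x,Du)$; compactness comes from the $W^{2,1}_q$-bound on $u$ and parabolic compactness for $m$. Uniqueness reduces to the classical Lasry-Lions argument: for two smooth solutions $(u_i, m_i)$, testing the HJ difference against $m_1 - m_2$ and the FP difference against $u_1 - u_2$, the strict monotonicity of $g$ from \eqref{fde} and the uniform convexity of $H$ from \eqref{H2} force $m_1 = m_2$, and then $u_1 = u_2$ by uniqueness for the HJ equation with the common source.

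The principal obstacle is closing the bootstrap exactly at the sharp threshold on $r$: the FP improvement $\sigma \mapsto \tilde\sigma$ hinges on delicate parabolic regularity with $L^s$-drift, and the dichotomy between the two expressions for $r$ in the statement mirrors the dichotomy of exponents in Theorem \ref{maxreg1} (sub- vs super-quadratic). An additional subtlety is initiating the iteration, since for $r$ in the upper part of the permitted range the base exponent $r+1$ may itself fall just below the max-regularity threshold; an initial interpolation step using the weighted bound $\iint |Du|^\gamma m \le C$, together with the $L^\infty(0,T;L^1)$ conservation of mass for $m$, is expected to be needed to lift $g(m)$ into the regime of Theorem \ref{maxreg1} before the expansive iteration can take over.
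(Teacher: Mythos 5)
Your proposed bootstrap misses the mechanism that actually produces the threshold in Theorem \ref{mfg1}: the monotonicity hypothesis \eqref{fde} is never exploited in your iteration at the level of a priori regularity. You only invoke the upper polynomial bound $|g(m)|\lesssim m^r$ to transfer integrability of $m$ into integrability of the HJ source; the lower bound $g'(m)\ge C_g^{-1}m^{r-1}$ appears in your sketch only in the Lasry--Lions uniqueness argument. But those two bounds alone are the assumption \eqref{fm2} of the \emph{non-monotone} Theorem \ref{mfg2}, whose admissible range $r<\gamma'/d$ (sub-quadratic case) is strictly smaller than the range $r<\frac{\gamma'}{d-2}\frac{d}{d+2-\gamma'}$ claimed here. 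Thus, even granting that your HJ $\to$ FP $\to$ HJ iteration is expansive and closes, it cannot reach the monotone threshold, because the information it feeds on is genuinely weaker. You also flag, but do not resolve, the problem of initiating the iteration: for $r$ near the upper end of the admissible range the base bound $m\in L^{r+1}(Q_T)$ places $g(m)\sim m^r$ in $L^{(r+1)/r}$, which sits below the maximal-regularity threshold $(d+2)/\gamma'$, so the first HJ step is not even applicable.

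The paper's proof goes differently (Lemma \ref{estm}). After the first-order energy identity (which you reproduce), the key step is a \emph{second-order} estimate obtained by testing the HJ equation against $\Delta m$ and the Fokker--Planck equation against $\Delta u$. This is where \eqref{fde} and the uniform convexity in \eqref{H2} enter: the lower bound $g'(m)\ge C_g^{-1}m^{r-1}$ turns the cross term $\iint g'(m)|Dm|^2$ into a bound on $\iint |D m^{(r+1)/2}|^2$, while $\mathrm{Tr}(D^2_{pp}H\,(D^2u)^2)\ge C_H^{-1}(1+|Du|^2)^{(\gamma-2)/2}|D^2u|^2$ gives a weighted Hessian bound on $u$ against the measure $m$. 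These two quantities feed Lemma \ref{mfurtherreg} (a one-shot duality estimate for the FP equation using a weighted $L^2$ control of $\mathrm{div}(b)$), which lifts $m$ to $L^\infty(0,T;L^\eta)$ for $\eta$ up to $d/(d-2)$ (resp.\ $d(\gamma-1)/(d(\gamma-1)-2)$) in the sub- (resp.\ super-) quadratic case. Parabolic interpolation with the $W^{1,2}$ control of $m^{(r+1)/2}$ then puts $g(m)$ in $L^q$ with $q$ above the threshold of Theorem \ref{maxreg1} in a single step, without iteration, precisely under the stated bound on $r$. After that the bootstrap you describe (Aronson--Serrin for FP, then Schauder) is indeed what the paper does to upgrade to classical solutions, and your sketch of the Leray--Schauder and Lasry--Lions arguments is fine. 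The substantive gap is the absence of the second-order identity: without it, the monotonicity of $g$ does no work in your regularity scheme and the claimed exponent range is out of reach.
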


It is understood here that there are no restrictions on $r$ when $d = 1,2$. To our knowledge, the results of this manuscript extend known classical regularity regimes in the sub-quadratic case. 
Note that
\begin{align*}
 \frac{\gamma'}{d-2}\frac{d}{(d+2-\gamma')} & \to +\infty \quad \text{as $\gamma \to 1 + \frac{1}{d+1}$}, \\
 \frac{\gamma'}{d-2}\frac{d}{(d+2-\gamma')} & \to \frac{2}{d-2} \quad \text{as $\gamma \to 2$}.
\end{align*}
Regarding the super-quadratic case, we actually obtain the same restriction $r < \frac{2}{d(\gamma-1) - 2}$ as in \cite{Gomessup}, but we are able to cover the full interval $\gamma \in [2, \infty)$, thus unlocking some smoothness regimes for $\gamma > 3$. Still, one may conjecture that, for all $d \ge 1$, no restrictions on $r$ should be required to get classical solutions (as for the purely quadratic case $H(p) = |p|^2$ investigated in \cite[Theorem 4.1]{NHM}), but this remains an open question.

\smallskip
In the {\it non-monotone} framework, the assumption of increasing monotonicity of $g(\cdot)$ is dropped. Conversely, one may even consider a $g(\cdot)$ which is strictly decreasing (focusing case), and model aggregation phenomena as in \cite{CeCi, CirJDE, CT}. In this framework, a control on the growth of $g$ is structurally needed. For stationary problems, it was shown in \cite{CCPDE} that when $g(m) = -m^r$, no solutions on $\R^d$ exist when $r > \frac{\gamma'}{d-\gamma'}$, and even for $r \ge \frac{\gamma'}{d}$ some further assumptions might be needed for solutions to be obtained. On the other hand, when $r < \frac{\gamma'}{d}$, existence of weak solutions is shown in \cite{CT}, but their classical regularity is proven under much stronger hypotheses, that impose $\gamma < 2$. 

Now, we suppose
\begin{equation}\label{fm2}\tag{$M^-$}
- C_g(m^r+1) \leq g(m)\leq C_g(m^r+1).
\end{equation}
With respect to the previous assumptions \eqref{fde} in the monotone case, $g$ need not be monotone increasing; in contrast, it can be strictly decreasing, for example $g(m) = -m^r$. We have the following
\begin{thm}\label{mfg2}
Under the assumptions \eqref{H} and \eqref{fm2}, there exists a smooth solution to \eqref{mfgdef} if
\begin{equation}
r < 
\begin{cases}
\frac{\gamma'}{d} & \text{if $1 + \frac{1}{d+1} < \gamma \le 2$} \medskip \\ 
\frac{2}{(d+2)(\gamma-1)-2} & \text{if $\gamma \ge 2$}.
\end{cases}
\end{equation}
\end{thm}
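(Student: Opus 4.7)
The proof will follow a Leray--Schauder fixed-point argument applied to the map $\mu \mapsto m$ that takes a given density $\mu$, solves the Hamilton--Jacobi equation with source $g(\mu)$ to obtain $u$, and then solves the Fokker--Planck equation with drift $D_pH(x,Du)$ to obtain $m$. The bulk of the work goes into proving a priori estimates for any classical solution of \eqref{mfgdef}, uniform in the Leray--Schauder parameter. Once these are in place, Schauder regularity for the decoupled linear equations (with H\"older-continuous coefficients recovered at the end of the bootstrap) yields smooth solutions, and compactness-continuity of the fixed-point map closes the existence argument. The sign condition is not used, so \eqref{fm2} enters only through the size control $|g(m)| \lesssim m^r + 1$.

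The central a priori estimate starts from the energy identity obtained by testing \eqref{hjb} against $m$ and the Fokker--Planck equation against $u$, then summing. Using the convexity bound $H - D_pH \cdot p \le -c|p|^\gamma + C$ and integrating by parts, this produces
\[
c \int_0^T\!\!\int_{\T} m|Du|^\gamma \, dx\, dt \;\le\; \int_{\T} u_0 m_0 \, dx - \int_{\T} u_T m(T)\, dx + C\int_0^T\!\!\int_{\T}(m^{r+1} + m)\, dx\, dt.
\]
Combined with the conservation of mass, this bound on $\int m|Du|^\gamma$ is equivalent to a bound on the drift $D_pH(x,Du)$ in $L^{\gamma'}(m\, dx\, dt)$, which in turn can be converted into an $L^\sigma(Q_T)$ bound on $m$ for a suitable $\sigma > 1$ by testing the Fokker--Planck equation against $m^{\sigma-1}$ and invoking a parabolic Gagliardo--Nirenberg--Sobolev inequality; absorbing the $m^{r+1}$ term on the right-hand side forces a smallness condition on $r$ that should match the threshold in the statement.

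Once $m \in L^\sigma(Q_T)$, the bound $|g(m)| \lesssim m^r + 1$ gives $g(m) \in L^q(Q_T)$ with $q = \sigma/r$. Here the two theorem thresholds should enter sharply: in the subquadratic case one needs $q > (d+2)/\gamma'$, so that Theorem \ref{maxreg1} is applicable, while in the superquadratic case one needs $q > (d+2)(\gamma-1)/2$. Matching $\sigma/r$ with these thresholds is what produces $r < \gamma'/d$ and $r < 2/((d+2)(\gamma-1)-2)$, respectively. Applying Theorem \ref{maxreg1} then yields $u \in W^{2,1}_q(Q_T)$ together with $Du \in L^{\gamma q}(Q_T)$, which refines the integrability of the drift in the Fokker--Planck equation; a finite bootstrap, using parabolic regularity for the Fokker--Planck equation with drift in progressively better Lebesgue spaces, upgrades $m$ until $g(m)$ is H\"older continuous. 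Standard Schauder theory then gives $u \in C^{2+\alpha, 1+\alpha/2}$ and $m \in C^{2+\alpha, 1+\alpha/2}$.

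The main obstacle is the calibration in Step 2: the basic parabolic gain from $\int m |Du|^\gamma < \infty$ has to match the maximal regularity exponent of Theorem \ref{maxreg1} on the nose. In the subquadratic regime this matching is relatively clean because $D_pH$ has sublinear growth and the scale-critical space $L^{(d+2)/d}$ for $m$ leads directly to $r < \gamma'/d$. In the superquadratic regime the growth of $D_pH$ is stronger and the relevant threshold $q > (d+2)(\gamma-1)/2$ is larger, so one will need to use the H\"older bound from Theorem \ref{mainholder} at some stage of the bootstrap, since purely integral-based estimates on $m$ appear not to be enough to reach the stated exponent $r < 2/((d+2)(\gamma-1)-2)$. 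Handling this interplay, together with the usual care required to justify the testing computations on approximate problems (e.g.\ truncations of $g$), constitutes the technical heart of the proof.
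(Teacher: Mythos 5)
Your overall scaffold (first-order energy identity, conversion to an $L^\sigma(Q_T)$ bound on $m$, feeding $g(m)\in L^q$ into Theorem \ref{maxreg1}, then bootstrap) is the right one and agrees with the paper's argument, but there are two places where the reasoning as written does not close.

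First, you attribute both restrictions on $r$ to ``matching $\sigma/r$ with the maximal-regularity threshold.'' In the paper the $L^\sigma$ bound on $m$ comes from the Gagliardo--Nirenberg inequality of \cite[Proposition 2.5]{CT}, namely $\big(\iint_{Q_T} m^{r+1}\big)^\delta \lesssim \iint_{Q_T} |D_pH(x,Du)|^{\gamma'}m + 1$ with $\delta>1$, and this inequality is \emph{itself} valid only for $r < \gamma'/d$ -- that absorption step is where the subquadratic threshold actually originates. It produces $\sigma = r+1$, and with that value of $\sigma$ matching $(r+1)/r = q > (d+2)/\gamma'$ only yields the \emph{weaker} restriction $r < \gamma'/(d+2-\gamma')$ (since $\gamma'>2$ when $\gamma<2$). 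So ``matching'' alone does not recover the stated bound $r<\gamma'/d$; you need to identify the GN step as the binding constraint in the subquadratic regime. The appeal to a ``scale-critical $L^{(d+2)/d}$ space for $m$'' would give the right number but is not what the energy estimates actually furnish.

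Second, your final paragraph suggests that for $\gamma>2$ ``purely integral-based estimates on $m$ appear not to be enough'' and that one must invoke Theorem \ref{mainholder} at some stage of the Fokker--Planck bootstrap. This is not needed. The same $L^{r+1}(Q_T)$ estimate on $m$ gives $g(m)\in L^{(r+1)/r}$, and the theorem's condition $r < 2/((d+2)(\gamma-1)-2)$ is exactly equivalent to $(r+1)/r > (d+2)(\gamma-1)/2$, which is the superquadratic threshold in Theorem \ref{maxreg1}. The H\"older bound of Theorem \ref{mainholder} is used \emph{inside} the proof of Theorem \ref{maxreg1} to close the interpolation, not as an extra ingredient in the MFG a priori estimate. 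One should also check (as the paper does implicitly) that the superquadratic restriction on $r$ implies $r<\gamma'/d$, so that the GN inequality remains available.
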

In other words, we prove that solutions in the focusing case obtained in \cite{CT} are always classical in the subquadratic regime. In the superquadratic regime, a new class of smooth solutions is obtained. We stress that the threshold $\frac{\gamma'}d$ is crucial, as one may have non-existence of solutions for large time-horizon $T$ when $r \ge \frac{\gamma'}d$ \cite{CD}. For a summary sketch of the existence regimes that we obtain here, see Figure \ref{mfgplot}.

Theorems \ref{mfg1} and \ref{mfg2} are proven as follows. First, we use structural estimates to deduce some a priori bounds on $\|g(m)\|_{L^q}$. These are mainly second-order type estimates in the monotone framework, and first-order estimates in the non-monotone one (plus some additional interpolation procedure). The assumptions on the growth $r$ of $g(\cdot)$ guarantee that $q$ is large enough to apply our maximal regularity results for the HJ equation. Then, once the crucial a priori bounds on $u$ in $W^{2,1}_q$ are established, a bootstrap procedure allows to get estimates up to second order derivatives in H\"older spaces, and existence follows via standard methods.

\begin{figure}
\centering
\def\svgwidth{.7\columnwidth}
\begingroup%
  \makeatletter%
  \providecommand\color[2][]{%
    \errmessage{(Inkscape) Color is used for the text in Inkscape, but the package 'color.sty' is not loaded}%
    \renewcommand\color[2][]{}%
  }%
  \providecommand\transparent[1]{%
    \errmessage{(Inkscape) Transparency is used (non-zero) for the text in Inkscape, but the package 'transparent.sty' is not loaded}%
    \renewcommand\transparent[1]{}%
  }%
  \providecommand\rotatebox[2]{#2}%
  \newcommand*\fsize{\dimexpr\f@size pt\relax}%
  \newcommand*\lineheight[1]{\fontsize{\fsize}{#1\fsize}\selectfont}%
  \ifx\svgwidth\undefined%
    \setlength{\unitlength}{632.1134932bp}%
    \ifx\svgscale\undefined%
      \relax%
    \else%
      \setlength{\unitlength}{\unitlength * \real{\svgscale}}%
    \fi%
  \else%
    \setlength{\unitlength}{\svgwidth}%
  \fi%
  \global\let\svgwidth\undefined%
  \global\let\svgscale\undefined%
  \makeatother%
  \begin{picture}(1,0.55400464)%
    \lineheight{1}%
    \setlength\tabcolsep{0pt}%
    \put(0,0){\includegraphics[width=\unitlength,page=1]{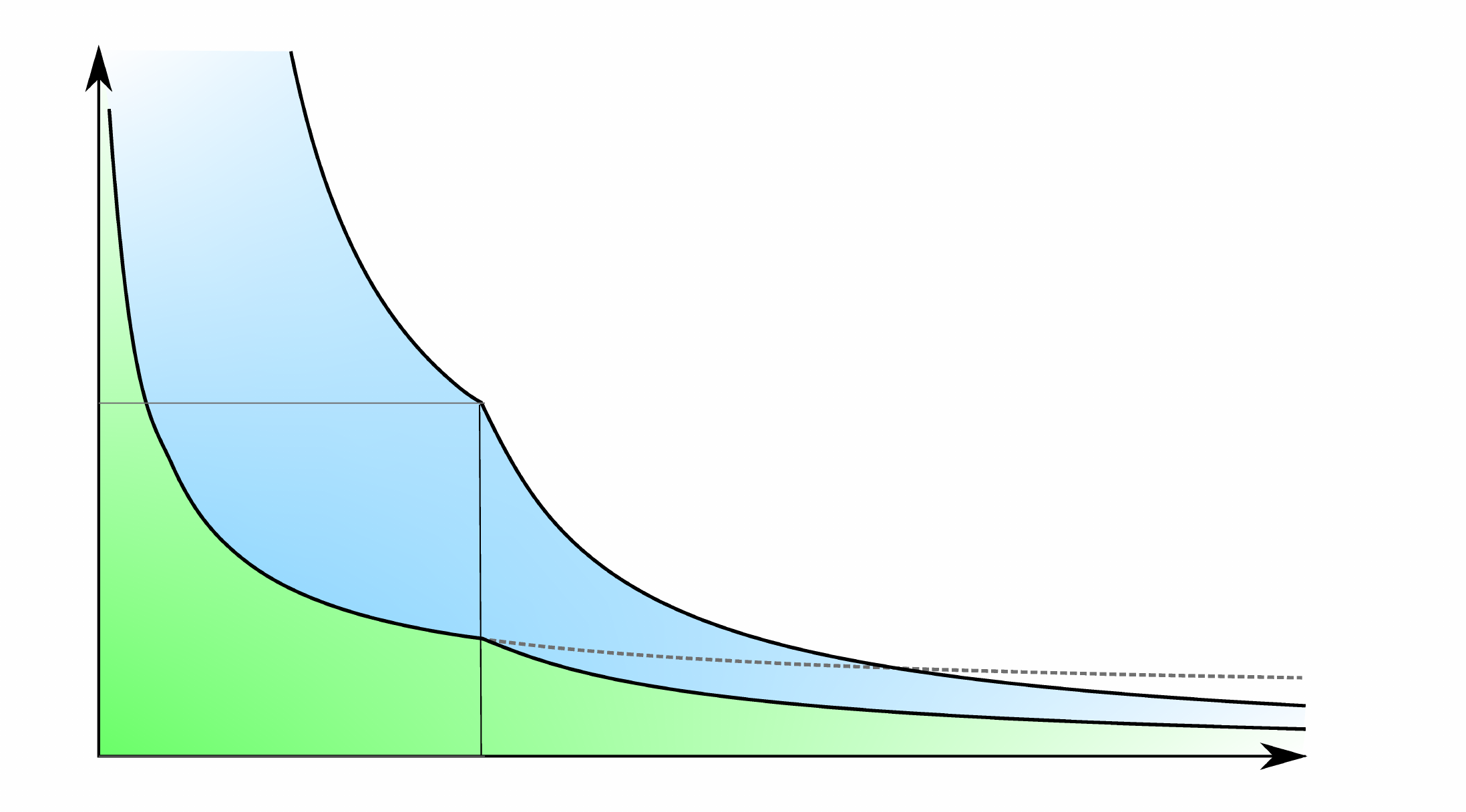}}%
    \put(0.31715705,0.00443404){\makebox(0,0)[lt]{\lineheight{1.25}\smash{\begin{tabular}[t]{l}$2$\end{tabular}}}}%
    \put(-0.00167624,0.2732){\makebox(0,0)[lt]{\lineheight{1.25}\smash{\begin{tabular}[t]{l}$\frac{d}{d-2}$\end{tabular}}}}%
    \put(0.02942546,0.04475034){\makebox(0,0)[lt]{\lineheight{1.25}\smash{\begin{tabular}[t]{l}$0$\end{tabular}}}}%
    \put(0.06841066,0.00390401){\makebox(0,0)[lt]{\lineheight{1.25}\smash{\begin{tabular}[t]{l}$1$\end{tabular}}}}%
    \put(0.87974343,0.00943889){\makebox(0,0)[lt]{\lineheight{1.25}\smash{\begin{tabular}[t]{l}$\gamma$\end{tabular}}}}%
    \put(0.74349057,0.11958927){\makebox(0,0)[lt]{\lineheight{1.25}\smash{\begin{tabular}[t]{l}$\frac{d}{\gamma'}$\end{tabular}}}}%
    \put(0.04971007,0.53696108){\makebox(0,0)[lt]{\lineheight{1.25}\smash{\begin{tabular}[t]{l}$r$\end{tabular}}}}%
    \put(0,0){\includegraphics[width=\unitlength,page=2]{plotmfg.pdf}}%
    \put(0.14084608,0.00442802){\makebox(0,0)[lt]{\lineheight{1.25}\smash{\begin{tabular}[t]{l}$1+\frac{1}{d+1}$\end{tabular}}}}%
  \end{picture}%
\endgroup%
\caption{\footnotesize A sketch of regions where existence of classical solutions to \eqref{mfgdef} are obtained, in the monotone case (light blue and green regions) and in the non-monotone case (green region only).}\label{mfgplot}
\end{figure}

\medskip

{\bf Outline}. In Sections \ref{stat} and \ref{par} we introduce the functional setting that is used throughout the paper, while Section \ref{regadj} is devoted to Sobolev regularity results for Fokker-Planck equations. Section \ref{firstbounds} comprehends the proof of a priori integral and H\"older estimates for Hamilton-Jacobi equations with unbounded ingredients. In Section \ref{sec;max}, Theorem \ref{maxreg1} on maximal regularity for \eqref{hjb} is proven, while in Section \ref{mfgs} the existence Theorems \ref{mfg1} and \ref{mfg2} for \eqref{mfgdef} are proven.

\bigskip

{\bf Acknowledgements.} The authors are members of the Gruppo Nazionale per l'Analisi Matematica, la Probabilit\`a e le loro Applicazioni (GNAMPA) of the Istituto Nazionale di Alta Matematica (INdAM). This work has been partially supported by 
the Fondazione CaRiPaRo
Project ``Nonlinear Partial Differential Equations:
Asymptotic Problems and Mean-Field Games". The second-named author wishes to thank Prof. Alessandra Lunardi for discussions about Lemma \ref{embprel}.

\section{Preliminaries}\label{funcspa}

\subsection{Fractional spaces of periodic functions}\label{stat}
Let $\mu\in(0,1)$ and $1\leq p,q\leq \infty$. The Besov space $B_{pq}^\mu(\T)$ consists of all functions $u\in L^p(\T)$ such that
the norm
\[
\|u\|_{B_{pq}^\mu(\T)}:=\|u\|_{L^p(\T)}+\left(\int_{\T}\frac{\|f(x+h)-f(x)\|^q_{L^p(\T)}}{|h|^{d+\mu q}}\,dh\right)^{\frac{1}{q}}
\]
is finite. When $p=q=\infty$ and $\mu=\alpha\in(0,1)$, the space $B_{\infty\infty}^\mu(\T)\simeq C^{\alpha}(\T)$ and it is endowed with the equivalent norm
\[
\|u\|_{C^\alpha(\T)}:=\|u\|_{C(\T)}+\sup_{x\neq y\in\T}\frac{|u(x)-u(y)|}{\mathrm{dist}(x,y)^\alpha}
\]
where $\mathrm{dist}(x,y)$ is the geodesic distance on $\T$. When, instead, $p=q\in(1,\infty)$ and $\mu$ is not an integer, it is immediate to recognize that $B_{pp}^\mu(\T)\simeq W^{\mu,p}(\T)$, where $W^{\mu,p}(\T)$ is the classical Sobolev-Slobodeckii scale in the periodic setting. When $q=\infty$, the space $B_{p\infty}^\mu(\T)\simeq N^{\mu,p}(\T)$ is known as Nikol'skii space \cite{Nik} and the above norm is interpreted as usual as
\[
\|u\|_{N^{\mu,p}(\T)}:=\|u\|_{L^p(\T)}+\underbrace{\sup_{h}|h|^{-\mu}\|u(x+h)-u(x)\|_{L^p(\T)}}_{[u]_{N^{\mu,p}(\T)}}\ ,
\]
see \cite[Chapter 17]{Leoni} for analogous spaces defined on $\R^d$, and \cite[p. 460]{T}, \cite[Section 3.5.4]{ST} for details in the periodic case.
For general $\mu>0$, let $\mu=k+\sigma>0$ with $k\in\N\cup\{0\}$, $\sigma\in(0,1]$, $1\leq p<\infty$. Then, we define the Nikol'skii class
\[
N^{\mu,p}(\T):=\{u\in W^{k,p}(\T):[D^\alpha u]_{N^{\sigma,p}(\T)}<\infty\ ,|\alpha|=k\}
\]
We mention that in view of \cite[Corollary 2 p. 143]{T92}, Nikol'skii spaces $N^{\mu,p}(\T)$ can be endowed with equivalent norms of the form
\[
\|u\|_{N^{\mu,p}(\T)}:=\|u\|_{L^p(\T)}+\sum_{|\alpha|=k}\sup_{|h|>0}|h|^{k-\mu}\|\Delta^r_h u\|_{L^p(\T)}
\]
where $k,r\in\Z$ are such that $0\leq k<\mu$ and $r>s-k$, where $\Delta^r_h$ are the increments of order $r$ and step $h$, see \cite{Leoni}. Moreover, these spaces can be characterized via real interpolation: for $m\in\N$, $p,q\in[1,\infty]$ and $\theta\in(0,1)$ we have\footnote{We denote here with $(X_0,X_1)_{\theta,q}$ the standard real interpolation space between Banach spaces $X_0, X_1$.}
\[
(L^p(\T),W^{m,p}(\T))_{\theta,q}\simeq B^{\theta m}_{p,q}(\T)\ ,
\]
with equivalence of the respective norms, see e.g. \cite{Lunardi}, \cite[Theorem 17.24]{Leoni}.

\subsection{Space-time anisotropic spaces}\label{par}
For any time interval $(t_1, t_2) \subseteq \R$, let $Q_{t_1, t_2}:=\T\times (t_1, t_2)$. We will also use the notation $Q_{t_2}:= \T\times (0, t_2)$. For any $p\geq1$ and $Q = Q_{t_1, t_2}$, we denote by $W^{2,1}_p(Q)$ the space of functions $u$ such that $\partial_t^{r}D^{\beta}_xu\in L^p(Q)$ for all multi-indices $\beta$ and $r$ such that $|\beta|+2r\leq  2$, endowed with the norm
\begin{equation*}
\norm{u}_{W^{2,1}_p(Q)}=\left(\iint_{Q}\sum_{|\beta|+2r\leq2}|\partial_t^{r}D^{\beta}_x u|^pdxdt\right)^{\frac1p}.
\end{equation*}
The space $W^{1,0}_p(Q)$ is defined similarly, and is endowed with the norm
\[
\norm{u}_{W^{1,0}_p(Q)}:=\norm{u}_{L^p(Q)}+\sum_{|\beta|=1}\norm{D_x^{\beta}u}_{L^p(Q)}\ .
\]
We define the space $\H_p^{1}(Q)$ as the space of functions $u\in W^{1,0}_p(Q)$ with $\partial_tu\in (W^{1,0}_{p'}(Q))'$, equipped with the norm
\begin{equation*}
\norm{u}_{\mathcal{H}_p^{1}(Q)}:=\norm{u}_{W^{1,0}_p(Q)}+\norm{\partial_tu}_{(W^{1,0}_{p'}(Q))'}\ .
\end{equation*}
Denoting by $C([t_1, t_2]; X)$, $C^\alpha([t_1, t_2]; X)$ and $L^q(t_1, t_2; X)$ the usual spaces of continuous, H\"older and Lebesgue functions respectively with values in a Banach space $X$, we have the following isomorphisms: $W^{1,0}_2(Q) \simeq L^2((t_1, t_2); W^{1,2}(\T))$, and
\begin{multline*}
\H_2^1(Q) \simeq \{u \in L^2(t_1, t_2; W^{1,2}(\T)), \, \partial_t u \in  (\, L^2(t_1, t_2; W^{1,2}(\T))\, )'  \} \\ \simeq \big\{u \in L^2(t_1, t_2; W^{1,2}(\T)), \, \partial_t u \in  L^2\big(t_1, t_2; (W^{1,2}(\T))'\big) \big \},
\end{multline*}
and the latter is known to be continuously embedded into $C([t_1, t_2]; L^2(\T))$ (see, e.g., \cite[Theorem XVIII.2.1]{DL}). Sometimes, we will use the compact notation $C(X)$ and $L^q(X)$.

\subsection{Sobolev regularity of solutions to Fokker-Planck equations}\label{regadj}

In this section we collect a few existence and regularity properties of solutions to
\begin{equation}\label{adj}
\begin{cases}
-\partial_t\rho(x,t)-\Delta \rho(x,t)+\mathrm{div}(b(x,t)\rho(x,t))=0&\text{ in }Q_\tau\ ,\\
\rho(x,\tau)=\rho_\tau(x)&\text{ in }\T
\end{cases}
\end{equation} 
Here, $\tau \in (0,T]$, $Q_\tau:=\T\times (0,\tau)$ and $Q_{s,\tau}:= \T \times (s,\tau)$. We will assume that
\begin{equation}\label{rhoass}
\rho_\tau\in L^\infty(\T), \quad \rho_\tau\geq0 \ \text{ a.e.} 
\end{equation}

We first recall that such a transport equation with diffusion is well-posed in an energy space, and has bounded solutions, when $|b|\in L^\sP(Q_\tau)$ with $\sP$ satisfying the so-called Aronson-Serrin condition $\sP \ge d+2$. For a proof of the following classical result, see e.g. \cite[Proposition 2.3]{CG2}, or \cite{BCCS, BOP, LSU}.
\begin{prop}\label{wfkpp}
Let $|b|\in L^\sP(Q_\tau)$, with $\sP \ge d+2$, and $\rho_\tau$ be as in \eqref{rhoass}. Then, there exists a unique weak solution $\rho\in\H_2^1(Q_\tau) \cap L^\infty((0,\tau); L^r(\T))$ for all $r \ge 1$ to \eqref{adj}, i.e.
\begin{equation}\label{wfkp}
 -\int_s^\tau \langle \partial_t\rho(t), \varphi(t) \rangle dt + \iint_{Q_{s,\tau}} D\rho\cdot D\varphi  - b \rho \cdot D\varphi \,dxdt = 0
\end{equation}
for all $s > 0$ and $\varphi \in \H_2^1(Q_{s,\tau})$, and $\rho(\tau) = \rho_\tau$ in the $L^2$-sense. Moreover, $\rho$ is a.e. nonnegative on $Q_\tau$.  
\end{prop}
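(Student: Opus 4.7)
The plan is to treat this as a classical Aronson--Serrin type result for a backward linear Fokker--Planck equation, using regularization together with a testing argument tuned to the integrability threshold $\sP = d+2$. After the change of variable $t \mapsto \tau - t$, \eqref{adj} becomes a forward Cauchy problem for $\tilde \rho$ with diffusion, transport and bounded initial datum, so throughout the sketch I work as if the equation were forward.

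First I would prove the claims for a smooth, bounded approximating drift $b_n \in C^\infty(\overline{Q_\tau})$ converging to $b$ in $L^\sP(Q_\tau)$ (obtained by truncation and mollification). For each $n$ the Cauchy problem admits a smooth non-negative solution $\rho_n$ by standard linear parabolic theory and the maximum principle. Testing the approximating equation against $\rho_n$ and using Cauchy--Schwarz plus Young on the drift term gives
\begin{equation*}
\tfrac12 \frac{d}{ds}\int_\T \rho_n^2 + \tfrac12 \int_\T |D\rho_n|^2 \le C \int_\T |b_n|^2 \rho_n^2.
\end{equation*}
The right-hand side is controlled by H\"older's inequality with exponents $(\sP/2, (\sP/2)')$ and the parabolic Sobolev embedding $L^\infty(L^2) \cap W^{1,0}_2 \hookrightarrow L^{2(d+2)/d}(Q_\tau)$; the Aronson--Serrin exponent $\sP \ge d+2$ is precisely what matches the interpolation exponents so that the cross term can be absorbed in the Dirichlet energy. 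This yields a bound on $\rho_n$ in $\H_2^1(Q_\tau)$ depending only on $\|b\|_{L^\sP}$, $\tau$ and $\|\rho_\tau\|_\infty$.

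Next I would upgrade to the $L^\infty((0,\tau); L^r(\T))$ estimate for every $r \ge 2$. Testing with $\rho_n^{r-1}$ and computing yields
\begin{equation*}
\frac{1}{r}\frac{d}{ds}\int_\T \rho_n^r + \frac{4(r-1)}{r^2}\int_\T |D\rho_n^{r/2}|^2 = \frac{2(r-1)}{r}\int_\T b_n \rho_n^{r/2} \cdot D\rho_n^{r/2}.
\end{equation*}
The drift term is handled exactly as above but applied to $\rho_n^{r/2}$ in place of $\rho_n$: Cauchy--Schwarz, Young to absorb the gradient, H\"older with $|b_n|^2 \in L^{\sP/2}$, and parabolic Sobolev on $\rho_n^{r/2}$, again closing because $\sP \ge d+2$. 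A Gronwall-type argument then gives bounds on $\|\rho_n\|_{L^\infty(L^r)}$ uniform in $n$ for every $r \ge 2$, and the case $r < 2$ follows by interpolation with mass conservation.

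Finally I would pass to the limit: the uniform $\H_2^1$ estimate together with Aubin--Lions gives strong $L^2$ compactness of $\rho_n$, weak convergence of gradients, and strong $L^\sP$ convergence of $b_n$, which suffices to pass to the limit in the weak formulation \eqref{wfkp} and in the terminal trace (the latter by the continuous embedding $\H_2^1 \hookrightarrow C([0,\tau]; L^2)$). The limit $\rho$ lies in $\H_2^1 \cap L^\infty(L^r)$, is non-negative as a limit of non-negative functions, and satisfies $\rho(\tau) = \rho_\tau$. Uniqueness follows by applying the basic energy estimate to the difference of two weak solutions, whose terminal datum is zero. The main technical obstacle throughout is handling the drift term at the endpoint $\sP = d+2$, where one has no room to spare in the Young inequality and the parabolic Sobolev exponent must be matched exactly; this is the step which forces the Aronson--Serrin threshold and which is carried out in detail in the cited references \cite{CG2, BCCS, BOP, LSU}.
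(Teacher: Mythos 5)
The paper does not prove this proposition at all; it simply defers to \cite[Proposition 2.3]{CG2} and \cite{BCCS, BOP, LSU}, so there is no in-paper argument to compare against. Your sketch follows the classical Aronson--Serrin energy method that those references implement (mollify the drift, uniform $\H_2^1$ and $L^\infty(L^r)$ bounds via testing with powers, Aubin--Lions compactness plus the embedding $\H_2^1 \hookrightarrow C([0,\tau];L^2(\T))$ for the terminal trace, energy method for uniqueness), and the plan is sound.

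One small imprecision worth fixing: at the endpoint $\sP = d+2$, after H\"older and the space-time embedding $L^\infty(L^2)\cap L^2(W^{1,2})\hookrightarrow L^{2(d+2)/d}(Q_\tau)$, the integrated cross term $\iint |b_n|^2\rho_n^2$ is bounded by $\|b_n\|^2_{L^{d+2}(Q_\tau)}\big(\sup_t\|\rho_n(t)\|^2_{L^2}+\iint|D\rho_n|^2\big)$, and this \emph{cannot} simply be absorbed into the Dirichlet energy unless $\|b\|_{L^{d+2}(Q_\tau)}$ is small. The standard remedies are either to apply Gagliardo--Nirenberg slicewise in time (producing, after Young's inequality, a term $C_\eps\|b_n(t)\|^{d+2}_{L^{d+2}(\T)}\int_\T\rho_n^2$ that feeds into Gronwall, rather than a global-in-time quantity), or to split $b_n$ into a bounded piece plus a piece small in $L^{d+2}(Q_\tau)$ by absolute continuity of the integral. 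You invoke ``Gronwall-type'' reasoning only in the $L^r$ step; it is needed (or the splitting trick) already in the basic $\H_2^1$ estimate. Since you explicitly flag the endpoint as the delicate point and defer details to the references --- exactly as the paper does --- this is a gloss rather than a gap, but the word ``absorbed'' overstates what the inequality gives directly.
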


Our analysis will be based on regularity properties of $\rho$ in Sobolev spaces that depend on the integrability of $|b|$ against $\rho$ itself. Similar results already appeared in \cite{CT,BKRS,MPR, Porr}.

\begin{prop}\label{new2}
Let $\rho$ be the nonnegative weak solution to \eqref{adj} and $1<\sigma'<d+2$. Then, there exists $C>0$, depending on $T,\sigma',d$, such that
\begin{equation}\label{ineqint}
\|\rho\|_{\H^1_{\sigma'}(Q_\tau)}\leq C\left(\iint_{Q_\tau}|b(x,t)|^{m'}\rho\,dxdt+\|\rho_\tau\|_{L^{p'}(\T)}\right)
\end{equation}
where $m'=1+\frac{d+2}{\sigma}$ and $p' = \frac{d \sigma}{\sigma(d+1)-(d+2)}$ if $\sigma'>\frac{d+2}{d+1}$, while $p' = 1$ if $\sigma'<\frac{d+2}{d+1}$.
\end{prop}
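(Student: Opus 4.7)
The plan is to employ a duality method: test $\rho$ against solutions of auxiliary forward parabolic problems and exploit parabolic maximal $L^\sigma$-regularity (in both non-divergence and divergence form) together with Sobolev-type trace embeddings to transfer bounds back to $\rho$.

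For the $L^{\sigma'}$-estimate on $\rho$, fix $F\in L^\sigma(Q_\tau)$ and let $\varphi$ solve $\partial_t\varphi - \Delta\varphi = F$ in $Q_\tau$ with $\varphi(0) = 0$. Testing $\rho$ against $\varphi$ in the weak formulation \eqref{wfkp} and integrating by parts in both $x$ and $t$ yields the identity
\[
\iint_{Q_\tau}\rho F\,dx\,dt = \int_\T \rho_\tau\,\varphi(\tau)\,dx + \iint_{Q_\tau} b\cdot D\varphi\,\rho\,dx\,dt.
\]
Maximal $L^\sigma$-regularity gives $\|\varphi\|_{W^{2,1}_\sigma}\le C\|F\|_{L^\sigma}$, whence $D\varphi \in L^{m\sigma}(Q_\tau)$ with $m = (\sigma+d+2)/(d+2)$ via parabolic Sobolev (the inequality $m\sigma\le \sigma(d+2)/(d+2-\sigma)$ reduces to $\sigma^2\ge 0$). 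Applying H\"older with the conjugate pair $(m',m)$ and then $(\sigma',\sigma)$ bounds the drift term by $C(\iint|b|^{m'}\rho)^{1/m'}\|\rho\|_{L^{\sigma'}}^{1/m}\|F\|_{L^\sigma}$. Taking $\sup_{\|F\|_{L^\sigma}\le 1}$ and absorbing the self-referential factor via Young's inequality (since $1/m<1$) produces
\[
\|\rho\|_{L^{\sigma'}(Q_\tau)} \le C\|\rho_\tau\|_{L^{p'}(\T)} + C\iint_{Q_\tau}|b|^{m'}\rho\,dx\,dt.
\]

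For $\|D\rho\|_{L^{\sigma'}}$ I repeat the duality, now against $\varphi$ solving the divergence-form problem $\partial_t\varphi - \Delta\varphi = -\dive\,\xi$, $\varphi(0)=0$, for test fields $\xi\in L^\sigma(Q_\tau)$. Divergence-form parabolic $L^\sigma$-regularity yields $\varphi\in \H^1_\sigma(Q_\tau)$ with $\|D\varphi\|_{L^\sigma}\le C\|\xi\|_{L^\sigma}$, and the trace $\varphi(\tau)\in W^{1-2/\sigma,\sigma}(\T)$ embeds (via standard Sobolev on $\T$) into $L^p(\T)$ with $1/p = 1/\sigma - (1-2/\sigma)/d = (d+2-\sigma)/(d\sigma)$, giving exactly $p = d\sigma/(d+2-\sigma)$ and $p' = d\sigma/(\sigma(d+1)-(d+2))$ as claimed. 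The validity condition $s\sigma<d$ with $s = 1-2/\sigma$ is precisely $\sigma<d+2$, matching the case split: when $\sigma\ge d+2$ the space $W^{1-2/\sigma,\sigma}(\T)$ embeds into $C^0$, hence $\varphi(\tau)\in L^\infty$ and $p'=1$ suffices. An identical H\"older--Young chain produces the $L^{\sigma'}$ bound on $D\rho$. Finally, for $\|\partial_t\rho\|_{(W^{1,0}_\sigma(Q_\tau))'}$ I use the PDE directly: for $\varphi\in W^{1,0}_\sigma$,
\[
\langle\partial_t\rho,\varphi\rangle = \iint_{Q_\tau} D\rho\cdot D\varphi - \iint_{Q_\tau} b\rho\cdot D\varphi,
\]
and H\"older combined with the bounds already obtained closes the estimate.

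The main obstacle is the careful exponent bookkeeping and the correct choice of dual problem. The form $m' = 1 + (d+2)/\sigma$ is dictated by requiring its conjugate $m$ to satisfy $m\sigma$ below the parabolic Sobolev exponent for $D\varphi$, together with Young-absorbability $1/m<1$. The form $p' = d\sigma/(\sigma(d+1)-(d+2))$ arises specifically from the \emph{weaker} divergence-form trace embedding needed for the $D\rho$ component; the non-divergence dualization alone would give a better but different $p'$, so the unified statement uses the exponent imposed by the gradient estimate, and then a Sobolev embedding on $\T$ makes it suffice for the $L^{\sigma'}$ estimate as well. The borderline $\sigma = d+2$ is precisely where the trace space $W^{1-2/\sigma,\sigma}(\T)$ transitions from sub- to super-critical, explaining the dichotomy in the statement.
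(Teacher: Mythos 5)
Your strategy is genuinely different from the paper's: for $\sigma'>\tfrac{d+2}{d+1}$ (i.e. $\sigma<d+2$) the paper does \emph{not} use duality but tests the equation against $\rho^\beta$ with $\beta=\tfrac{m'-2}{d+2-m'}$ and then invokes DiBenedetto's parabolic interpolation; duality is reserved only for the time derivative and for the complementary regime $\sigma>d+2$ (via the cited result of \cite{CG2}). A unified duality proof would be nice, but as written yours has a real gap in Steps~2 and~3.

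The issue is the claim that ``an identical H\"older--Young chain produces the $L^{\sigma'}$ bound on $D\rho$.'' In Step~1 you had $\varphi\in W^{2,1}_\sigma$, so parabolic Sobolev gave $D\varphi\in L^{m\sigma}$, and the chain $\iint|b||D\varphi|\rho\le A\bigl(\iint|D\varphi|^m\rho\bigr)^{1/m}\le A\|D\varphi\|_{L^{m\sigma}}\|\rho\|_{L^{\sigma'}}^{1/m}$ closes against $\|\rho\|_{L^{\sigma'}}$. In the divergence-form dualization you only control $D\varphi\in L^\sigma$; there is no further gain for $D\varphi$, since $\xi\mapsto D\varphi$ is a degree-zero parabolic singular integral. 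The same H\"older chain then forces
\[
\iint|D\varphi|^m\rho\le\|D\varphi\|_{L^\sigma}^m\,\|\rho\|_{L^{(\sigma/m)'}},\qquad (\sigma/m)'=\frac{\sigma(d+2)}{\sigma(d+1)-(d+2)}=:s,
\]
and $s>\sigma'$ (indeed $s$ is exactly the parabolic Sobolev exponent of $\H^1_{\sigma'}(Q_\tau)$), so $\|\rho\|_{L^s}$ is \emph{not} controlled by the $L^{\sigma'}$ bound of Step~1. The same self-referential term $\|\rho\|_{L^s}^{1/m}$ reappears in the $\partial_t\rho$ estimate, since $\iint b\rho\cdot D\varphi$ has the same structure. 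So neither estimate closes as stated; the paper's energy method does not face this because the $\rho^\beta$ testing produces the $L^s$ bound directly as $\|\rho\|_{L^{(d+2)/(d+2-m')}}$ before any gradient estimate is attempted.

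The gap is repairable within your framework, in two ways. (a) Upgrade Step~1: test against $F\in L^{s'}(Q_\tau)$ rather than $L^\sigma$. Then $\varphi\in W^{2,1}_{s'}$, and a short computation shows $m\,s'=\sigma=(d+2)s'/(d+2-s')$, so the (critical) parabolic Sobolev embedding gives precisely $D\varphi\in L^\sigma$ with $\|D\varphi\|_{L^\sigma}\lesssim\|F\|_{L^{s'}}$; moreover $\varphi(\tau)\in W^{2-2/s',s'}(\T)\hookrightarrow L^p(\T)$ with the \emph{same} $p=\tfrac{d\sigma}{d+2-\sigma}$ as before. The chain now yields $\|\rho\|_{L^s}\le C\bigl(\|\rho_\tau\|_{L^{p'}}+\iint|b|^{m'}\rho\bigr)$, and with this stronger bound in hand the gradient and time-derivative steps do close. (b) Alternatively, keep your Step~1 but close the loop globally: write all three contributions to $\|\rho\|_{\H^1_{\sigma'}}$, use $\H^1_{\sigma'}\hookrightarrow L^s$ (Proposition~\ref{emb1} with $\alpha=0$) to majorize $\|\rho\|_{L^s}^{1/m}$ by $\|\rho\|_{\H^1_{\sigma'}}^{1/m}$, and absorb via Young with exponents $(m,m')$, assuming a priori finiteness of $\|\rho\|_{\H^1_{\sigma'}}$ for a regularized $b$ and passing to the limit. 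Either fix is essential and should be spelled out; without it the exponent bookkeeping that your write-up presents as ``identical'' is not.
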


As we will se below, if $\sigma'>\frac{d+2}{d+1}$ we obtain also
\begin{equation}\label{ineqint3}
\|\rho(t)\|_{L^{p'}(\T)} \leq C\left(\iint_{Q_\tau}|b(x,t)|^{m'}\rho\,dxdt+\|\rho_\tau\|_{L^{p'}(\T)}\right) \quad \text{for all $t \in [0,\tau]$}.
\end{equation}

\begin{proof}
The case $\sigma'<\frac{d+2}{d+1}$ is covered by \cite[Proposition 2.5]{CG2}, which is based on duality combined with maximal regularity arguments (following \cite{MPR}).
We focus here on the case $\sigma'>\frac{d+2}{d+1}$, and prove the theorem via a (standard) method from weak solutions, that does not exploit parabolic Cald\`eron-Zygmund regularity. This is possible because $\frac{d+2}{d+1} < \sigma' < d+2$ implies $2 < m' < d+2$. \\
Set $\beta:=\frac{m'-2}{d+2-m'}$. To simplify, we use $\varphi:=\rho^\beta$ as a test function in the weak formulation of \eqref{adj} integrating on $Q_{t,\tau}:=\T\times(t,\tau)$, while the argument can be made rigorous by testing against $\varphi:=(\rho+\epsilon)^\beta$, and then letting $\epsilon\to0$. We have
\begin{multline*}
\frac{1}{\beta+1}\int_\T \rho(x,t)^{\beta+1}\,dx+\beta\iint_{Q_{t,\tau}}\rho^{\beta-1}|D\rho|^2\,dxdt\leq \iint_{Q_{t,\tau}}|b|\rho^\beta|D\rho|\,dxdt+\frac{1}{\beta+1}\int_{\T}|\rho_\tau|^{\beta+1}\,dx\\
\leq C_\beta\iint_{Q_{t,\tau}}|b|^2\rho^{\beta+1}\,dxdt+\frac{\beta}{4}\iint_{Q_{t,\tau}}\rho^{\beta-1}|D\rho|^2\,dxdt+\frac{1}{\beta+1}\int_{\T}|\rho_\tau|^{\beta+1}\,dx
\end{multline*}
In particular, noting that $D\rho^\frac{\beta+1}{2}=\frac{\beta+1}{2}\rho^{\frac{\beta-1}{2}}D\rho$, we write
\[
\beta\iint_{Q_{t,\tau}}\rho^{\beta-1}|D\rho|^2\,dxdt=\frac{4\beta}{(\beta+1)^2}\iint_{Q_{s,\tau}}|D\rho^{\frac{\beta+1}{2}}|^2\,dxdt
\]
to get that
\[
\frac{1}{\beta+1}\int_\T \rho(x,t)^{\beta+1}\,dx+\beta\iint_{Q_{t,\tau}}\rho^{\beta-1}|D\rho|^2\,dxdt\geq c_\beta \left[\int_\T (|\rho(x,t)|^{\frac{\beta+1}{2}})^2\,dx+\iint_{Q_{t,\tau}}|D\rho^{\frac{\beta+1}{2}}|^2\,dxdt\right].
\]
We first pass to the supremum over $t\in(0,\tau)$ and, by means of \cite[Proposition I.3.1]{DiBen} we deduce
\[
c_\beta\left[\mathrm{ess\ sup}_{t\in(0,\tau)}\int_\T (|\rho(x,t)|^{\frac{\beta+1}{2}})^2\,dx+\iint_{Q_{\tau}}|D\rho^{\frac{\beta+1}{2}}|^2\,dxdt\right]\geq c_{\beta,d}\left(\iint_{Q_{\tau}}\rho^{(\beta+1)\frac{d+2}{d}}\,dxdt\right)^{1-\frac{2}{d+2}}\ .
\]
We then have
\begin{multline*}
c_{\beta,d}\left(\iint_{Q_{\tau}}\rho^{(\beta+1)\frac{d+2}{d}}\,dxdt\right)^{1-\frac{2}{d+2}}+\frac{\beta}{2}\iint_{Q_{\tau}}\rho^{\beta-1}|D\rho|^2\,dxdt\\
\leq c_1\left(\iint_{Q_{\tau}}|b|^{m'}\rho\,dxdt\right)^{\frac{2}{m'}}\left(\iint_{Q_{\tau}}\rho^{\beta\frac{m'}{m'-2}+1}\,dxdt\right)^{1-\frac{2}{m'}}+c_2\|\rho_\tau\|_{L^{\beta+1}(\T)}^{\beta+1}\ .
\end{multline*}
We then note that $\sigma<d+2$ implies $m'>2$ and
\[
(\beta+1)\frac{d+2}{d}=\beta\frac{m'}{m'-2}+1=\frac{d+2}{d+2-m'}\ .
\]
We apply Young's inequality to the first term on the right-hand side to conclude
\begin{multline*}
c_1\left(\iint_{Q_{\tau}}|b|^{m'}\rho\,dxdt\right)^{\frac{2}{m'}}\left(\iint_{Q_{\tau}}\rho^{\beta\frac{m'}{m'-2}+1}\,dxdt\right)^{1-\frac{2}{m'}}\\
\leq c_3\left(\iint_{Q_{\tau}}|b|^{m'}\rho\,dxdt\right)^\frac{d}{d+2-m'}+\frac{c_{\beta,d}}{2}\left(\iint_{Q_{\tau}}\rho^{(\beta+1)\frac{d+2}{d}}\,dxdt\right)^{\frac{d}{d+2}}\ .
\end{multline*}
Hence
\begin{multline*}
c_{\beta,d}\left(\iint_{Q_{\tau}}\rho^{\frac{d+2}{d+2-m'}}\,dxdt\right)^{\frac{d}{d+2}}+\frac{\beta}{2}\iint_{Q_{\tau}}\rho^{\beta-1}|D\rho|^2\,dxdt\\
\leq c_4\left[\left(\iint_{Q_{\tau}}|b|^{m'}\rho\,dxdt\right)^\frac{d}{d+2-m'}+\|\rho_\tau\|_{L^{\frac{d}{d+2-m'}}(\T)}^{\frac{d}{d+2-m'}}\right]\\
\leq c_5\left[\iint_{Q_{\tau}}|b|^{m'}\rho\,dxdt+\|\rho_\tau\|_{L^{\frac{d}{d+2-m'}}(\T)}\right]^\frac{d}{d+2-m'}\ .
\end{multline*}
We thus conclude the estimate
\[
\|\rho\|_{L^{\frac{d+2}{d+2-m'}}(Q_{\tau})}\leq C\left(\iint_{Q_{\tau}}|b|^{m'}\rho\,dxdt+\|\rho_\tau\|_{L^{\frac{d}{d+2-m'}}(\T)}\right)
\]
and also
\begin{equation}\label{inter}
\iint_{Q_{\tau}}\rho^{\beta-1}|D\rho|^2\,dxdt\leq \bar C\left[\iint_{Q_{\tau}}|b|^{m'}\rho\,dxdt+\|\rho_\tau\|_{L^{\frac{d}{d+2-m'}}(\T)}\right]^\frac{d}{d+2-m'}\ .
\end{equation}
Finally, recalling that $\sigma'=\frac{d+2}{d+3-m'}$, we get the Sobolev estimate applying H\"older's inequality, using \eqref{inter} and finally exploiting Young's inequality as
\begin{multline*}
\|D\rho\|_{L^{\sigma'}(Q_{\tau})}\leq \|\rho^{(\beta-1)/2}D\rho\|_{L^2(Q_{\tau})}\|\rho^{(1-\beta)/2}\|_{L^{\frac{2(d+2)}{d+4-2m'}}(Q_{\tau})}\\
=\left(\iint_{Q_{\tau}}\rho^{\beta-1}|D\rho|^2\,dxdt\right)^{\frac12}\left(\iint_{Q_{\tau}}\rho^{\frac{d+2}{d+2-m'}}\,dxdt\right)^{\frac{d+4-2m'}{2(d+2)}}\\ \leq c_6\left(\iint_{Q_{\tau}}|b|^{m'}\rho\,dxdt+\|\rho_\tau\|_{L^{\frac{d}{d+2-m'}}(\T)}\right)\ .
\end{multline*}
The estimate on the time derivative in $(W^{1,0}_\sigma(Q_\tau))'$ can be obtained by duality, as in \cite[Proposition 2.4]{CG2}.
\end{proof}

\begin{cor}\label{corrho}
Let $\rho$ be the nonnegative weak solution to \eqref{adj}. Then, there exists $C_1>0$, depending on $T,q,d$, such that if $ q < \frac{d+2}2$
\begin{equation*}
\sup_{t \in [0, \tau]} \|\rho(t)\|_{L^{p'}(\T)} + \|\rho\|_{L^{q'}(Q_\tau)}\leq C_1\left(\iint_{Q_\tau}|b(x,t)|^{\frac{d+2}q}\rho\,dxdt+\|\rho_\tau\|_{L^{p'}(\T)}\right)
\end{equation*}
where $p = \frac{d q}{(d+2)-2q}$, while if $ q > \frac{d+2}2$,
\begin{equation*}
\sup_{t \in [0, \tau]} \|\rho(t)\|_{W^{\frac{d}{d+2}-\frac2{q'}, \frac{(d+2)q'}{d+2+q'}}(\T)} + \|\rho\|_{L^{q'}(Q_\tau)}\leq C_1\left(\iint_{Q_\tau}|b(x,t)|^{\frac{d+2}q}\rho\,dxdt+\|\rho_\tau\|_{L^{1}(\T)}\right) .
\end{equation*}
\end{cor}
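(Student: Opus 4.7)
\emph{Proof idea.} The plan is to specialize Proposition~\ref{new2} to the choice $m' = (d+2)/q$, reading off the corresponding $\sigma$, $\sigma'$, $p'$, and then extract the two stated bounds from its conclusion (and from intermediate steps in its proof) together with standard parabolic embedding theorems. Take $\sigma := (d+2)q/(d+2-q)$, so that $m' = 1+(d+2)/\sigma = (d+2)/q$ and $\sigma' = (d+2)q/((d+3)q-(d+2))$. A direct check shows $\sigma'>(d+2)/(d+1)$ precisely when $q<(d+2)/2$, and $\sigma'<(d+2)/(d+1)$ precisely when $q>(d+2)/2$, so both cases fit the hypotheses of Proposition~\ref{new2}.

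In case~1, the sup-in-time $L^{p'}$ bound is exactly \eqref{ineqint3}, once the algebraic identity $d\sigma/(\sigma(d+1)-(d+2)) = dq/((d+2)(q-1))$ is verified (this quantity coincides with the conjugate of the stated $p = dq/((d+2)-2q)$). The $L^{q'}(Q_\tau)$ bound is not in the statement of Proposition~\ref{new2}, but it is an intermediate estimate produced in its proof: the displayed inequality
\[
\|\rho\|_{L^{(d+2)/(d+2-m')}(Q_\tau)} \leq C\Big(\iint_{Q_\tau}|b|^{m'}\rho\,dxdt + \|\rho_\tau\|_{L^{d/(d+2-m')}(\T)}\Big),
\]
obtained en route to the final Sobolev estimate in that proof, reduces to the desired bound after substituting $m'=(d+2)/q$ and observing $(d+2)/(d+2-m') = q'$, $d/(d+2-m') = p'$.

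In case~2, Proposition~\ref{new2} (its $p'=1$ branch, via \cite[Proposition~2.5]{CG2}) delivers $\|\rho\|_{\mathcal{H}^1_{\sigma'}(Q_\tau)} \leq C(\iint|b|^{m'}\rho + \|\rho_\tau\|_{L^1})$. I would then obtain the $L^{q'}(Q_\tau)$ bound from the parabolic Sobolev embedding $\mathcal{H}^1_{\sigma'}(Q_\tau) \hookrightarrow L^{\sigma'(d+2)/(d+2-\sigma')}(Q_\tau)$, valid for $1<\sigma'<d+2$, whose exponent equals $q'$ by the same algebra, and the sup-in-time Besov bound from the Lions-type trace theorem $\mathcal{H}^1_{\sigma'}(Q_\tau) \hookrightarrow C([0,\tau]; (W^{1,\sigma'}(\T),W^{-1,\sigma'}(\T))_{1/\sigma',\sigma'}) = C([0,\tau]; W^{1-2/\sigma',\sigma'}(\T))$, after checking the identifications $1-2/\sigma' = d/(d+2)-2/q'$ and $\sigma' = (d+2)q'/(d+2+q')$. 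The step that requires most care is the parabolic Sobolev embedding for $\mathcal{H}^1_{\sigma'}$ when $\sigma'<2$: the trace space then has negative smoothness, so the easy interpolation of $L^\infty(L^{p'})\cap L^{\sigma'}(W^{1,\sigma'})$ available in the range $\sigma'>2$ is no longer at our disposal, and one must appeal directly to parabolic Riesz/heat-kernel estimates or the classical results of Lions--Magenes and Amann; modulo citing these, the entire proof is purely algebraic bookkeeping of the exponents.
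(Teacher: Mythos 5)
Your proposal is correct and takes essentially the same route as the paper: specialize Proposition~\ref{new2} with $m' = (d+2)/q$, verify that the two regimes $\sigma' \gtrless \frac{d+2}{d+1}$ correspond exactly to $q \lessgtr \frac{d+2}{2}$, use \eqref{ineqint3} for the sup-in-time $L^{p'}$ bound in case~1, and use parabolic embeddings for the $L^{q'}$ bound and the sup-in-time Besov bound in case~2. All your exponent identities check out ($\sigma' = \frac{(d+2)q'}{d+2+q'}$, $1-2/\sigma' = \frac{d}{d+2}-\frac{2}{q'}$, $\frac{(d+2)\sigma'}{d+2-\sigma'}=q'$, $p' = \frac{dq}{(d+2)(q-1)}$). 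Your caution about the $\sigma'<2$ regime, where the trace space has negative smoothness and naive $L^\infty(L^{p'})\cap L^{\sigma'}(W^{1,\sigma'})$ interpolation fails, is legitimate; the paper addresses exactly this point in the Appendix, where Lemma~\ref{embprel} (embedding of $\mathcal H^1_p(Q_T)$ into $C([0,T];W^{1-2/p,p}(\T))$ via the Lions--Peetre trace method and the Reiteration Theorem) and Proposition~\ref{emb1} (embedding into $L^\delta(W^{\alpha,\delta})$) are proved for all $p>1$, with no restriction to $p\ge 2$. So the ``appeal to Lions--Magenes and Amann'' you anticipate is precisely what the appendix supplies, and your argument is complete modulo those citations.
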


\begin{proof} The first estimate follows by Proposition \ref{new2}, applied with $m'=\frac{d+2}{q}$ (see also \eqref{ineqint3}), and the continuous embedding of $\H^1_{\sigma'}(Q_\tau)$ into $L^{q'}(Q_\tau)$. The second estimate follows analogously, using also the embedding of $\H^1_{\sigma'}(Q_\tau)$ into $C([0, \tau]; W^{\frac{d}{d+2}-\frac2{q'}, \frac{(d+2)q'}{d+2+q'}}(\T))$.
 \end{proof}

\section{Bounds of solutions to HJ equations in Lebesgue and H\"older spaces}\label{firstbounds}

We derive in this section some preliminary bounds on solutions to Hamilton-Jacobi equations via duality methods. Since we are in the setting of maximal regularity and $f \in L^q(Q_T)$, we assume that $u \in W^{2,1}_q(Q_T) \cap W^{1,0}_{\gamma q}$ is a strong solution to \eqref{hjb}, i.e. it solves the Hamilton-Jacobi equation almost everywhere. The initial datum $u_0$ is achieved in $W^{2-2/q,q}(\T)$, as suggested by the Lions-Peetre trace method in interpolation theory (cf Lemma \ref{embprel} below).  Note that
by classical embedding properties for Sobolev-Slobodeckij spaces, we have the following inclusions
\begin{equation}\label{initdat}
W^{2-\frac{2}{q},q}(\T)\hookrightarrow \begin{cases}
C^{2-\frac{d+2}{q}}(\T)&\text{ for }q>\frac{d+2}{2}\ ,\\
L^p(\T)&\text{ for }p\in[1,\infty)\text{ and }q=\frac{d+2}{2}\ ,\\
L^{\frac{dq}{d+2-2q}}(\T)&\text{ for } q<\frac{d+2}{2}\ .\\
\end{cases}
\end{equation}
Since we will work under the assumptions
\[
q \ge \frac{d+2}{\gamma'}, \qquad \gamma > 1 + \frac{2}{d+2},
\]
emebeddings of $W^{2,1}_q(Q_T)$ imply that $u$ is bounded in $W^{1,0}_{\gamma q}(Q_T)$ (so we can drop $u \in  W^{1,0}_{\gamma q}(Q_T)$ in the statements of our results), and $u \in L^2(0,T; W^{1,2}(\T))$. Furthermore,
\[
|Du|^{\gamma-1} \in L^{\sP}(Q_T) \qquad \text{for some $\sP \ge d+2$,}
\]
so the dual equation \eqref{adj} is well-posed in $\H_2^1$ whenever $b(x,t) = -D_pH(x, Du(x,t))$. Note finally that when $q > (d+2)/2$, any solution is automatically continuous and $u$ solves \eqref{hjb} in the weak sense used in \cite{CG2}. This always happens in the superquadratic regime $\gamma > 2$. On the other hand, in the subquadratic case $\gamma < 2$, $u$ is not necessarily continuous when $ (d+2)/\gamma' \le q < (d+2)/2$.

\begin{rem} The assumption
\[
\gamma > 1 + \frac{2}{d+2}
\]
guarantees that $u \in L^2(0,T; W^{1,2}(\T))$, so $u$ has finite energy and it can be safely used as a test function for the dual equation \eqref{adj}. One can drop this requirement in all of the following statements, relaxing to $$\gamma > 1 + \frac1{d+1},$$ so that $q >  \frac{d+2}{\gamma'} > 1$, and assuming {\it a priori} that $u \in L^2(0,T; W^{1,2}(\T))$ (which is always true for example when $u$ is a classical solution, as in Section \ref{mfgs} on MFG). One could also invoke methods from renormalized solutions, to deal also with the case $\gamma \le 1 + \frac1{d+1}$ but this is beyond the scopes of this paper.
\end{rem}

\subsection{Bounds in Lebesgue spaces}

We obtain in this section bounds on the $L^p$-norm of $u$, elaborating in particular the case $\frac{d+2}{\gamma'} \le q < \frac{d+2}2$, that is when $\gamma < 2$. 

$T_k(s) = \max\big\{-s,\min\{s,k\}\big\}$ below will denote the truncation operator at level $k > 0$, $u^+ = \max\{u,0\}$ and $u^- = (-u)^+$ the positive and negative part of $u$ respectively.

We start with some bounds on $u^+$, that are obtained with no restrictions on  $q \ge 1$.

\begin{lemma}\label{pboundP+} Assume that $H$ is non-negative. Let $u$ be a strong solution to \eqref{hjb} in $W^{2,1}_q(Q_T)$, $q \ge1$. There exists a positive constant ${C}_0$ (depending on $q, d, T$) such that
\begin{equation}\label{upiu}
\| u^+(\tau) \|_{L^p(\T)} \le \|u^+_0\|_{L^p(\T)}+C_0 \|f^+\|_{L^q(Q_\tau)},
\end{equation}
where $p = \frac{d q}{(d+2)-2q}$ if $q < \frac{d+2}2$, while $p = \infty$ if $q > \frac{d+2}2$.
\end{lemma}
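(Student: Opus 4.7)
The strategy is a duality argument against the backward heat equation, exploiting the sign of $H$. Specifically, for a fixed $\tau\in(0,T]$ and a free non-negative datum $\rho_\tau\in L^{p'}(\T)$ (with $p'$ the H\"older conjugate of $p$, and $p'=1$ when $p=\infty$), let $\rho$ be the unique weak solution to the adjoint problem \eqref{adj} with $b\equiv 0$, i.e. the backward heat equation on $Q_\tau$. By Proposition \ref{wfkpp} we have $\rho\in\H^1_2(Q_\tau)$ and $\rho\geq 0$. Moreover, the heat semigroup is a contraction on every $L^r$, so $\|\rho(t)\|_{L^{p'}(\T)}\leq\|\rho_\tau\|_{L^{p'}(\T)}$ for every $t\in[0,\tau]$.

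\smallskip

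Since $u\in W^{2,1}_q(Q_T)$ is a strong solution of \eqref{hjb} and $\rho$ belongs to the energy space, the pairing makes sense and one can multiply the HJ equation by $\rho$ and integrate by parts in space-time on $Q_\tau$. Using that $-\partial_t\rho-\Delta\rho=0$ kills the linear terms we obtain the exact identity
\[
\int_\T u(\tau)\rho_\tau\,dx \;+\; \iint_{Q_\tau} H(x,Du)\,\rho\,dxdt \;=\; \int_\T u_0\,\rho(0)\,dx \;+\; \iint_{Q_\tau} f\,\rho\,dxdt.
\]
Now $H\geq 0$ and $\rho\geq 0$ let us drop the Hamiltonian term on the left, while $\rho(0)\geq 0$ and $f\leq f^+$ give $\int u_0\rho(0)\le\int u_0^+\rho(0)$ and $\iint f\rho\le\iint f^+\rho$. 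Hence
\[
\int_\T u(\tau)\rho_\tau\,dx \;\leq\; \int_\T u_0^+\,\rho(0)\,dx \;+\; \iint_{Q_\tau} f^+\,\rho\,dxdt.
\]

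\smallskip

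To isolate $u^+(\tau)$ on the left, choose $\rho_\tau = \varphi\,\chi_{\{u(\tau)>0\}}$ for arbitrary $0\le\varphi\in L^{p'}(\T)$ with $\|\varphi\|_{L^{p'}(\T)}\leq 1$. Then $\int u(\tau)\rho_\tau\,dx = \int_\T u^+(\tau)\,\varphi\,dx$ and $\|\rho_\tau\|_{L^{p'}(\T)}\leq 1$. By H\"older and the contractivity of the heat semigroup, $\int u_0^+\rho(0)\,dx\leq\|u_0^+\|_{L^p(\T)}\|\rho(0)\|_{L^{p'}(\T)}\leq\|u_0^+\|_{L^p(\T)}$. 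For the right-most term, apply Corollary \ref{corrho} (with $b\equiv 0$, so the drift integral vanishes): when $q<(d+2)/2$ and $p=dq/(d+2-2q)$ this yields $\|\rho\|_{L^{q'}(Q_\tau)}\leq C_0\|\rho_\tau\|_{L^{p'}(\T)}\leq C_0$, hence $\iint f^+\rho\leq C_0\|f^+\|_{L^q(Q_\tau)}$. Taking the supremum over admissible $\varphi$ gives the claim by duality between $L^p$ and $L^{p'}$.

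\smallskip

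For $q>(d+2)/2$ the target is $p=\infty$ and the same scheme works with $L^{p'}=L^1$: one takes $\rho_\tau$ a non-negative approximation of a Dirac mass centred at an arbitrary $x_0\in\{u(\tau)>0\}$ (truncated to that set), which has unit $L^1$ norm. The identity above then reads, in the limit, $u^+(\tau,x_0)\leq\|u_0^+\|_{L^\infty(\T)}+\|f^+\|_{L^q(Q_\tau)}\|\rho\|_{L^{q'}(Q_\tau)}$, and the last factor is bounded by a constant depending only on $q,d,T$ thanks to the Gaussian decay of the heat kernel, since $q>(d+2)/2$ is exactly the condition that makes $q'<(d+2)/d$ and thus $\int_0^\tau (\tau-t)^{-d(q'-1)/2}dt$ convergent. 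The mild obstacle throughout is the criticality of the Sobolev pairing between $L^{p'}$ at the terminal time and $L^{q'}$ in space-time; this is precisely the endpoint situation resolved by the energy+Moser calculation underlying Corollary \ref{corrho}, which allows us to reach the full range $q\ge 1$ announced in the statement.
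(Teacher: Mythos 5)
Your proof is correct and takes essentially the same approach as the paper: test against the backward heat flow, exploit $H\ge 0$, the $L^{p'}$-contractivity of the heat semigroup, and the $L^{q'}$ bound on the dual density from Corollary \ref{corrho}. The only difference is cosmetic: the paper fixes the extremal terminal datum $\big[T_k\big(u^+(\tau)\big)\big]^{p-1}/\|u^+(\tau)\|_{p}^{p-1}$ and tests the subsolution inequality satisfied by $u^+$, whereas you run the pairing with the full equation for $u$ starting the dual flow from a free non-negative datum supported on $\{u(\tau)>0\}$ and then take a supremum over such data.
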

Note that $W^{2,1}_q(Q_T)$ is embedded into $C([0,T]; L^p(\T))$ ($p$ as in the previous statement) so \eqref{upiu} has the form of an a priori bound.

\begin{proof}  We detail the proof in the case $q < \frac{d+2}2$ only. The case $q > \frac{d+2}2$ can be treated in an analogous way (essentially following \cite[Section 3]{CG2}). 

For $k > 0$, let $\mu = \mu_k$ be the weak non-negative solution of the following backward problem
\[
\begin{cases}
-\partial_t\mu(x,t)-\Delta \mu(x,t)=0&\text{ in }Q_\tau\ ,\\
\mu(x,\tau)=\frac{\big[T_k\big(u^+(x, \tau)\big)\big]^{p-1}}{\|u^+(\tau)\|_{p}^{p-1}}&\text{ in }\T
\end{cases}
\]
Note that $\|\mu_\tau\|_{L^{p'}(\T)}\le1$. Since $\mu$ solves an equation of the form \eqref{adj} with $b\equiv0$, by Corollary \ref{corrho} we have
\[
 \|\mu\|_{L^{q'}(Q_\tau)}\leq C,
\]
where $C$ does not depend on $k$. Since $u^+$ is a weak subsolution to
\[
\partial_t u^+(x,t)-\Delta u^+(x,t)\le \big[f(x,t) - H(x,Du(x,t))\big] \chi_{\{u > 0\}} \qquad \text{ in }Q_\tau,
\]
testing against $\mu$, and testing the equation for $\mu$ against $u$ we get that
\[
\int_\T u^+(\tau)\mu(\tau)\,dx \le \int_\T u^+_0\mu(0)\,dx+\iint_{Q_\tau \cap\{u> 0\}}f\mu\,dxdt-\iint_{Q_\tau\cap\{u> 0\}}H(x,Du)\mu \,dxdt.
\]
We apply H\"older's inequality to the second term of the right-hand side of the above inequality, the assumption $H\geq0$ on the Hamiltonian, and the fact that the backward heat equation preserves the $L^{p'}$ norm, i.e. $\|\mu(t)\|_{L^{p'}(\T)}\leq 1$ for all $t\in[0,\tau]$, to get, after sending $k \to \infty$, the desired inequality.
\end{proof}

We now proceed with some more delicate bounds on $u^-$, under the restriction $q > \frac{d+2}{\gamma'}$. We will use the following property of $H$: under the standing assumptions on $H$, the Lagrangian $L(x, v) = \sup_{p }\{\nu \cdot p - H(x, p)\}$ satisfies for some $C_L > 0$ (depending on $C_H$)
\begin{equation}\label{L1} 
C_L^{-1}|\nu|^{\gamma'} - C_L \le L(x, \nu) \le C_L |\nu|^{\gamma'}+ C_L.
\end{equation}

\begin{lemma}\label{pboundP-} Assume that \eqref{H} holds. Let $u$ be a strong solution to \eqref{hjb} in $W^{2,1}_q(Q_T)$, $q > \frac{d+2}{\gamma'}$. There exists a positive constant ${C}$ (depending on $C_H, q, d, T$) such that
\begin{multline}\label{umeno}
\| u^-(\tau) \|_{L^p(\T)} \le  \\
C\left( \| u^-_0 \|_{L^{p'}(\T)}+ \|f^-\|_{L^{q}(Q_\tau)}\right) + 2CC_L\tau \left( \| u^-_0 \|_{L^{p'}(\T)}+ \|f^-\|_{L^{q}(Q_\tau)}\right)^{\frac{q\gamma'}{q\gamma'-(d+2)} }  + C_L\tau.
\end{multline}
where $p = \frac{d q}{(d+2)-2q}$ if $q < \frac{d+2}2$, while $p = \infty$ if $q > \frac{d+2}2$.
\end{lemma}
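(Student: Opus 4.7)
\noindent\emph{Proof proposal.} The plan is to mimic the duality argument of Lemma~\ref{pboundP+}, but replacing the heat equation by the full adjoint linearization of \eqref{hjb}, whose drift $b=-D_pH(x,Du)$ encodes the Hamiltonian. This is needed because for $u^-$ the term $H(x,Du)$ has the ``wrong'' sign (it cannot be dropped by a sign argument, as in the bound on $u^+$) and must instead be absorbed via the Fenchel identity $Du\cdot D_pH(x,Du) - H(x,Du) = L(x, D_pH(x,Du)) \ge 0$, which will produce a useful gradient term $\iint |D_pH(x,Du)|^{\gamma'}\rho\,dxdt$ on the adjoint side.

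For $k>0$, let $\rho=\rho_k$ be the nonnegative weak solution on $Q_\tau$ of \eqref{adj} with drift $b(x,t):=-D_pH(x,Du(x,t))$ and terminal datum
\[
\rho_\tau := \frac{[T_k(u^-(\cdot,\tau))]^{p-1}}{\|u^-(\tau)\|_{L^p(\T)}^{p-1}}, \qquad \text{so that } \|\rho_\tau\|_{L^{p'}(\T)}\le 1.
\]
Proposition~\ref{wfkpp} applies because the growth of $H$, together with $u\in W^{2,1}_q$ and $q\ge(d+2)/\gamma'$, yields $|D_pH(x,Du)|\in L^{\sP}(Q_\tau)$ with some $\sP\ge d+2$. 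Testing \eqref{hjb} against $\rho$ and \eqref{adj} against $u$, integrating by parts and invoking the Fenchel identity above, one obtains
\[
\int_\T u(\tau)\rho_\tau\,dx = \int_\T u_0\,\rho(0)\,dx + \iint_{Q_\tau} f\rho\,dxdt + \iint_{Q_\tau} L(x, D_pH(x,Du))\,\rho\,dxdt.
\]
Letting $k\to\infty$ the left-hand side converges to $-\|u^-(\tau)\|_{L^p(\T)}$, and by the lower bound \eqref{L1} on $L$, writing $I := \iint_{Q_\tau}|D_pH(x,Du)|^{\gamma'}\rho\,dxdt$, we reach
\[
\|u^-(\tau)\|_{L^p(\T)} + C_L^{-1}I \le -\int_\T u_0\,\rho(0)\,dx - \iint_{Q_\tau} f\rho\,dxdt + C_L\tau,
\]
where $\iint\rho\le\tau\|\rho_\tau\|_{L^1}\le\tau$ by conservation of mass.

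The first two terms on the right are bounded by $\|u_0^-\|_{L^p}\|\rho(0)\|_{L^{p'}}$ and $\|f^-\|_{L^q}\|\rho\|_{L^{q'}}$ via H\"older. Corollary~\ref{corrho} with $b=-D_pH(x,Du)$ gives
\[
\|\rho(0)\|_{L^{p'}} + \|\rho\|_{L^{q'}} \le C\Big(\iint_{Q_\tau}|D_pH(x,Du)|^{(d+2)/q}\rho\,dxdt + 1\Big),
\]
and since $q>(d+2)/\gamma'$ is equivalent to $(d+2)/q<\gamma'$, Young's inequality yields
\[
|D_pH|^{(d+2)/q} \le \eps\,|D_pH|^{\gamma'} + C\,\eps^{-(d+2)/(q\gamma'-(d+2))}.
\]
Setting $A := \|u_0^-\|_{L^p} + \|f^-\|_{L^q}$ and collecting everything leads to
\[
\|u^-(\tau)\|_{L^p(\T)} + C_L^{-1}I \le C\,A\,\big(\eps\, I + \tau\,\eps^{-(d+2)/(q\gamma'-(d+2))} + 1\big) + C_L\tau.
\]
Choosing $\eps$ as a small constant multiple of $1/(A+1)$ absorbs the $\eps A I$ contribution into $C_L^{-1}I$, and the surviving term $A\cdot\eps^{-(d+2)/(q\gamma'-(d+2))}\tau$ becomes $A^{1+(d+2)/(q\gamma'-(d+2))}\tau = A^{q\gamma'/(q\gamma'-(d+2))}\tau$, exactly matching the superlinear term in the statement.

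The main obstacle is the calibration of $\eps$ in this absorption step: because $\|\rho\|_{L^{q'}}$ itself depends on $A$ through $I$, the parameter $\eps$ must be tuned as a function of $A$, and it is precisely this tuning that forces the exponent $q\gamma'/(q\gamma'-(d+2))$, the Young conjugate of $q\gamma'/(d+2)$. The case $q > (d+2)/2$, where $p=\infty$ and $p'=1$, is handled in the same spirit: one tests against $\rho_\tau$ an approximation of $\delta_{x_0}$ at a point $x_0\in\T$ where $u^-(\tau)$ is maximal, obtaining a pointwise bound, and uses the second form of Corollary~\ref{corrho} that only requires $\rho_\tau$ bounded in $L^1$.
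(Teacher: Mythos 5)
Your proof is correct and follows the same duality scheme as the paper: a Fokker--Planck dual equation with drift built from $D_pH(x,Du)$ and terminal datum $[T_k(u^-(\tau))]^{p-1}/\|u^-(\tau)\|_p^{p-1}$, the Fenchel lower bound \eqref{L1} on $L$ to generate the crossed term $\iint|D_pH|^{\gamma'}\rho$, and an $\varepsilon$-calibrated Young inequality to absorb the $\iint|D_pH|^{(d+2)/q}\rho$ coming from Corollary~\ref{corrho}, which yields exactly the superlinear exponent $\frac{q\gamma'}{q\gamma'-(d+2)}$. The only (cosmetic) difference from the paper's write-up is that you test the full Hamilton--Jacobi equation against $\rho$ and read off $-\|u^-(\tau)\|_{L^p}$ directly from the support of the terminal datum, whereas the paper treats $u^-$ as a weak subsolution and localizes the drift to $\{u<0\}$; both routes are equivalent in substance and give the same bound.
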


\begin{proof} As before, we detail the case $q < \frac{d+2}2$ only. 
For $k > 0$, let $\rho = \rho_k$ be the weak non-negative solution of 
\[
\begin{cases}
-\partial_t\rho(x,t)-\Delta \rho(x,t)+\mathrm{div}\big(D_pH(x,Du(x,t))\chi_{\{u < 0\}}\rho(x,t)\big)=0&\text{ in }Q_\tau\ ,\\
\rho(x,\tau)=\frac{\big[T_k\big(u^-(x, \tau)\big)\big]^{p-1}}{\|u^-(\tau)\|_{p}^{p-1}}&\text{ in }\T
\end{cases}
\]
As before, $\|\mu_\tau\|_{L^{p'}(\T)}\le1$. By Corollary \ref{corrho} we have
\begin{equation}\label{equz}
 \|\rho(0)\|_{L^{p'}(\T)} + \|\rho\|_{L^{q'}(Q_\tau)}\leq C_1\left(\iint_{Q_\tau}|D_pH(x,Du)|^{\frac{d+2}q} \chi_{\{u < 0\}} \rho\,dxdt+\|\rho_\tau\|_{L^{p'}(\T)}\right),
\end{equation}
where $C$ does not depend on $k$. Since $u^-$ is a weak subsolution to
\[
\partial_t u^-(x,t)-\Delta u^-(x,t)\le \big[-f(x,t) + H(x,Du(x,t))\big] \chi_{\{u < 0\}} \qquad \text{ in }Q_\tau,
\]
testing against $\rho$, and testing the equation for $\rho$ against $u^-$ we get that
\begin{multline*}
\int_\T u^-(\tau)\rho(\tau)\,dx + \iint_{Q_\tau}\big[-D_pH(x,Du)\cdot Du^- - H(x,Du)\big]  \chi_{\{u < 0\}}  \rho \,dxdt \\ \le \int_\T u^-_0\rho(0)\,dx-\iint_{Q_\tau\cap\{u< 0\}}f\rho\,dxdt.
\end{multline*}
On one hand, in view of \eqref{L1} note that 
\begin{multline*}
\big[-D_pH(x,Du)\cdot Du^- - H(x,Du)\big]  \chi_{\{u < 0\}} =L(D_pH(x,-Du^-))\chi_{\{u < 0\}} \\ \ge \big[C_L^{-1}|D_pH(x,Du)|^{\gamma'} - C_L\big]\chi_{\{u < 0\}},
\end{multline*}
and on the other hand by H\"older's inequality
\begin{multline}\label{equz2}
\int_\T u^-(\tau)\rho(\tau)\,dx + C_L^{-1} \iint_{Q_\tau}|D_pH(x,Du)|^{\gamma'}  \chi_{\{u < 0\}}  \rho \,dxdt - C_L \iint_{Q_\tau} \chi_{\{u < 0\}} \rho \,dxdt \\ \le \| u^-_0 \|_{L^{p}(\T)} \|\rho(0)\|_{L^{p'}(\T)} + \|f^-\|_{L^{q}(Q_\tau)} \|\rho\|_{L^{q'}(Q_\tau)}   .
\end{multline}
Then, plugging \eqref{equz} into \eqref{equz2} we obtain
\begin{multline}\label{crucstep}
\int_\T u^-(\tau)\rho(\tau)\,dx + C_L^{-1} \iint_{Q_\tau}|D_pH(x,Du)|^{\gamma'}  \chi_{\{u < 0\}}  \rho \,dxdt \le \\
C_1\left( \| u^-_0 \|_{L^{p}(\T)}+ \|f^-\|_{L^{q}(Q_\tau)}\right) \left(\iint_{Q_\tau}|D_pH(x,Du)|^{\frac{d+2}q} \chi_{\{u < 0\}} \rho\,dxdt+1\right) + C_L \iint_{Q_\tau} \chi_{\{u < 0\}} \rho \,dxdt.
\end{multline}
Since $q > \frac{d+2}{\gamma'}$ one can use Young's inequality, and the fact that $\int \rho(t) dx \le \|\rho(t)\|_{L^{p'}(\T)}\le 1$ to get the desired inequality (after passing to the limit $k \to \infty$).
\end{proof}

Combining \eqref{upiu} and \eqref{umeno}, we get the following estimate in the case $q > \frac{d+2}{\gamma'}$.
\begin{cor}\label{pboundP} Assume that \eqref{H} holds. Let $u$ be a strong solution to \eqref{hjb} in $W^{2,1}_q(Q_T)$, $q > \frac{d+2}{\gamma'}$, and assume that
\[
\|u_0\|_{L^p(\T)}, \, \|f\|_{L^{q}(Q_T)} \le K.
\]
Then, there exists a positive constant ${C}$ (depending on $K, C_H, q, d, T$) such that
\begin{equation}\label{pbound}
\sup_{t \in [0,T]}\|u(t)\|_{L^p(\T)} \le C,
\end{equation}
where $p = \frac{d q}{(d+2)-2q}$ if $q < \frac{d+2}2$, while $p = \infty$ if $q > \frac{d+2}2$.
\end{cor}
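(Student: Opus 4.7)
The plan is to decompose $|u(\tau)| = u^+(\tau) + u^-(\tau)$ and control each piece separately by invoking Lemma~\ref{pboundP+} and Lemma~\ref{pboundP-}, then take the triangle inequality $\|u(\tau)\|_{L^p(\T)} \le \|u^+(\tau)\|_{L^p(\T)} + \|u^-(\tau)\|_{L^p(\T)}$ and the supremum over $\tau \in [0,T]$.

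First, I align the hypotheses of Lemma~\ref{pboundP+} with the present setting: that lemma assumes $H\ge 0$, whereas \eqref{H} only yields $H \ge -C_H$. To reduce to the nonnegative case, I set $\tilde H(x,p) := H(x,p) + C_H \ge 0$ and $\tilde f := f + C_H$, so that $u$ solves $\partial_t u - \Delta u + \tilde H(x, Du) = \tilde f$ in $Q_T$. Applying Lemma~\ref{pboundP+} to this equation and using $\|\tilde f^+\|_{L^q(Q_\tau)} \le \|f^+\|_{L^q(Q_\tau)} + C_H T^{1/q}$ produces a bound on $\|u^+(\tau)\|_{L^p(\T)}$ depending only on $K$, $C_H$, $q$ and $T$.

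Next I apply Lemma~\ref{pboundP-} directly to bound $\|u^-(\tau)\|_{L^p(\T)}$. The estimate \eqref{umeno} involves $\|u_0^-\|_{L^{p'}(\T)}$, whereas the hypothesis of the corollary controls $\|u_0\|_{L^p(\T)}$; these are compatible because $|\T|=1$ and, in the range $q < \tfrac{d+2}{2}$, the standing assumption $\gamma > 1 + \tfrac{2}{d+2}$ combined with $q > \tfrac{d+2}{\gamma'}$ forces $q \ge \tfrac{2(d+2)}{d+4}$, hence $p = \tfrac{dq}{(d+2)-2q} \ge 2 \ge p'$. A one-line H\"older argument on the unit-measure torus then gives $\|u_0^-\|_{L^{p'}(\T)} \le \|u_0\|_{L^p(\T)} \le K$ (the case $q > \tfrac{d+2}{2}$ is trivial since $p=\infty$, $p'=1$). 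With this, every term on the right-hand side of \eqref{umeno} is controlled by a constant depending only on $K$, $C_H$, $q$, $d$, $T$.

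Summing the two bounds, taking the supremum over $\tau \in [0,T]$, yields \eqref{pbound}. There is no genuine obstacle here: the substantive analytic content (the duality against the adjoint Fokker--Planck equation and the Young's inequality absorption step) is already contained in the proofs of Lemmas~\ref{pboundP+} and~\ref{pboundP-}, and the corollary is essentially a packaging statement requiring only the sign reduction for $H$ and the finite-measure norm comparison above.
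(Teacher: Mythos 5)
Your proof is correct and follows the same route the paper intends (the paper's own justification is the single sentence "Combining \eqref{upiu} and \eqref{umeno}"). The two details you supply — shifting $H \mapsto H + C_H$, $f \mapsto f + C_H$ to meet the nonnegativity hypothesis of Lemma \ref{pboundP+}, and noting that $|\T|=1$ with $p\ge 2 \ge p'$ in the relevant range so that $\|u_0^-\|_{L^{p'}(\T)} \le \|u_0\|_{L^p(\T)}$ — are exactly the bookkeeping steps that the paper leaves implicit. (In fact, inspecting the proof of Lemma \ref{pboundP-}, specifically \eqref{equz2} and \eqref{crucstep}, the norm that actually appears is $\|u_0^-\|_{L^p}$, so the $L^{p'}$ in \eqref{umeno} is likely a typo; your H\"older reduction handles it either way.)
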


Note that the previous result does not cover the critical case $q = \frac{d+2}{\gamma'}$. The rest of the section is devoted to this endpoint situation, and more precise information on the stability of solutions in $L^p$ will be obtained. This will be crucial in the subsequent analysis of maximal regularity. It is worth noting that constants appearing in estimates below will not depend just on the norms $\|f\|_{L^q}$, $\|u_0\|_{L^p}$,  but on finer properties of $f$ in $L^q$ and $u_0$ in $L^p$; see Remark \ref{cdep}.

Let $\Gamma(x,t)$ be the fundamental solution of the heat equation on $\T \times (0, \infty)$. Consider, for $k > 0$, the solution $u_k$ to the problem with truncated / regularized data, i.e.
\begin{equation}\label{hjbt}
\begin{cases}
\partial_t u_k(x,t)-\Delta u_k(x,t)+H(x,Du_k(x,t))=T_k\big(f(x,t)\big)&\text{ in }Q_T = \T \times (0,T)\ ,\\
u_k(x,0)=u_0(\cdot) \star \Gamma(\cdot, 1/k) \, (x) &\text{ in }\T.
\end{cases}
\end{equation}
In other words, $u_k$ solves an Hamilton-Jacobi equation with $L^\infty$ right-hand side and initial datum in $C^\infty$. Such an initial datum is actually $u_k(x,0) = z(x, 1/k)$, where $z$ is the solution to the heat equation
\[
\begin{cases}
\partial_t z(x,t)-\Delta z(x,t)=0&\text{ in }Q_T = \T \times (0,T)\ ,\\
z(x,0)=u_0(x) &\text{ in }\T,
\end{cases}
\]
and converges to $u_0$ in $W^{2-2/q,q}(\T)$ as $k\to \infty$. 
 The existence and uniqueness of a strong solution $u_k \in W^{2,1}_p(Q_T)$ for all $p \ge 1$ can be obtained using for example results in \cite{CG2}. We prove now estimates on $u-u_k$.

\begin{prop}\label{stability} Assume that \eqref{H} holds. Let $u$ and $u_k$ be strong solutions to \eqref{hjb} and \eqref{hjbt} respectively, $\gamma < 2$ and $q = \frac{d+2}{\gamma'}$. Then, there exists a positive constant ${C}$ (depending on $f, u_0, C_H, q, d, T$) such that
\begin{equation}\label{pboundc}
\sup_{t \in [0,T]}\|u(t) - u_k(t)\|_{L^p(\T)} \le C\big(\|f- T_k(f)\|_{L^q(Q_T)} + \|u_0- u_0\star \Gamma(1/k)\|_{L^p(\T)} \big),
\end{equation}
where $p = \frac{d q}{(d+2)-2q} = d \frac{\gamma-1}{2-\gamma}$.
\end{prop}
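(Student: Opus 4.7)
The plan is to adapt the duality arguments of Lemmas \ref{pboundP+} and \ref{pboundP-} to the difference $w := u - u_k$, which satisfies
\[
\partial_t w - \Delta w + H(x,Du) - H(x,Du_k) = f - T_k(f), \qquad w(\cdot,0) = u_0 - u_0\star \Gamma(\cdot,1/k),
\]
so that both the source and the initial trace are precisely the quantities appearing on the right-hand side of \eqref{pboundc}. The convexity of $H(x,\cdot)$ yields the two-sided sandwich
\[
D_pH(x,Du_k)\cdot Dw \le H(x,Du) - H(x,Du_k) \le D_pH(x,Du)\cdot Dw,
\]
and Kato's inequality then implies that $w^+$ is a weak subsolution of a linear parabolic equation with drift $D_pH(x,Du_k)$ and source $(f-T_k(f))\chi_{\{w>0\}}$, while $w^-$ solves an analogous subsolution inequality with drift $D_pH(x,Du)$ and source $(T_k(f)-f)\chi_{\{w<0\}}$.

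For each sign I would then introduce $\mu^\pm \ge 0$ solving the backward Fokker-Planck equation with the corresponding drift and final datum $\mu^\pm(\tau) = [T_j(w^\pm(\tau))]^{p-1}/\|w^\pm(\tau)\|_{L^p(\T)}^{p-1}$; testing the $w^\pm$-inequality against $\mu^\pm$ and letting $j \to \infty$, exactly as in Lemmas \ref{pboundP+}--\ref{pboundP-}, produces
\[
\|w^\pm(\tau)\|_{L^p(\T)} \le \|w^\pm(0)\|_{L^p(\T)} \|\mu^\pm(0)\|_{L^{p'}(\T)} + \|f - T_k(f)\|_{L^q(Q_\tau)} \|\mu^\pm\|_{L^{q'}(Q_\tau)}.
\]
Corollary \ref{corrho} with the critical identity $(d+2)/q = \gamma'$ bounds the adjoint norms by $\iint |D_pH|^{\gamma'} \mu^\pm + \|\mu^\pm(\tau)\|_{L^{p'}}$, and testing the $u$- (resp.\ $u_k$-) equation against $\mu^\pm$ together with the Legendre identity $H - p \cdot D_pH = -L(D_pH)$ and the lower bound $L \ge C_L^{-1}|D_pH|^{\gamma'} - C_L$ delivers a Lagrangian-type upper bound
\[
\iint |D_pH|^{\gamma'} \mu^\pm \le C\left( M \big(\|\mu^\pm(0)\|_{L^{p'}} + 1\big) + \|f\|_{L^q(Q_\tau)} \|\mu^\pm\|_{L^{q'}(Q_\tau)} + \tau \right),
\]
where $M$ is a joint $L^\infty_t L^p_x$-bound on $u$ and on $u_k$. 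The bound on $u$ follows from the embedding $W^{2,1}_q(Q_T) \hookrightarrow C([0,T]; L^p(\T))$; the bound on $u_k$, uniform in $k$, must be secured as a preliminary step by iterating the very same scheme on $u_k$ itself, using crucially that $\|T_k(f)\|_{L^q} \le \|f\|_{L^q}$.

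Setting $Y := \|\mu^\pm(0)\|_{L^{p'}} + \|\mu^\pm\|_{L^{q'}(Q_\tau)}$, combining the last two displays produces a self-referential inequality
\[
Y \le C_2 \big( M(1 + Y) + \|f\|_{L^q(Q_\tau)} Y + \tau + 1\big),
\]
which at the critical exponent $q = (d+2)/\gamma'$ cannot be closed by Young's inequality (the mechanism that drives Lemma \ref{pboundP-} above $(d+2)/\gamma'$). Closure is instead achieved by a time-localization argument: partition $[0,T]$ into finitely many sub-intervals $[t_{i-1},t_i]$ on which, simultaneously, $\|f\|_{L^q(Q_{t_{i-1},t_i})}$ is small (by absolute continuity of $|f|^q\,dxdt$ as a measure in time) and the local $L^p$-oscillation of $u$ is small (by continuity of $u$ in $C([0,T]; L^p(\T))$, after subtracting a harmless time-independent shift), so that the self-referential inequality closes locally; iteration across the finite partition then delivers \eqref{pboundc}, with the constant $C$ depending on $f$ and $u_0$ through the partition size, i.e.\ through the \emph{subtler properties} alluded to in Remark \ref{remlim}. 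The main obstacle is exactly this closure: at the critical exponent the Young-absorption of Lemma \ref{pboundP-} is unavailable, so one must combine time equi-integrability of $|f|^q$ with $L^p$-continuity of $u$ to localize and iterate, rather than obtain a clean one-shot estimate.
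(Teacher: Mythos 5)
Your overall architecture — write $w=u-u_k$, split into $w^\pm$, use duality with an adjoint Fokker--Planck equation carrying drift $D_pH(x,Du_k)$ (resp.\ $D_pH(x,Du)$), obtain a crossed Lagrangian bound via Corollary~\ref{corrho}, and observe that the Young absorption of Lemma~\ref{pboundP-} is unavailable at the critical exponent — agrees with the paper up to the final closure step. The closure step, however, has a genuine gap.

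In your self-referential inequality the boundary term at $t=0$ contributes $\|u_0^-\|_{L^p}\,\|\mu^\pm(0)\|_{L^{p'}}$, which after feeding through Corollary~\ref{corrho} produces a term of order $M\,Y$ with $M$ the fixed $L^\infty_t L^p_x$ bound. Restricting to a short time interval shrinks $\|f\|_{L^q(Q_{t_{i-1},t_i})}$ and $\tau$, but does nothing to $M$, so the inequality only closes if $C_1 C M<1$, which is not guaranteed. The suggestion to subtract a \emph{time-independent} shift $v$ is not harmless: replacing $u$ by $u-v$ shifts the Hamiltonian and introduces a new source $\Delta v$ into the equation, and the resulting estimate has not been carried through. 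So, as written, the inequality does not close.

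The paper's actual closure mechanism is different and avoids this obstruction. Rather than localizing in time, it truncates $f^-$ at a level $h$ and $u_0^-$ at a level $h_0$: the ``bulk'' parts, bounded by $h$ and $h_0$, are paired with the conserved mass $\int_\T\rho(t)\,dx\le 1$, contributing only the additive constants $hT$ and $h_0$; the ``tails'' $f^-\chi_{\{f^-\ge h\}}$ and $u_0^-\chi_{\{u_0^-\ge h_0\}}$ are paired with $\|\rho\|_{L^{q'}}$ and $\|\rho(0)\|_{L^{p'}}$ respectively, and by absolute continuity of $|f|^q\,dxdt$ and $|u_0|^p\,dx$ one chooses $h,h_0$ so large that the coefficient $\|u_0^-\chi_{\{u_0^-\ge h_0\}}\|_{L^p}+\|f^-\chi_{\{f^-\ge h\}}\|_{L^q}\le (2C_1C_L)^{-1}$, which is then absorbed. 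Thus the equi-integrability of the data enters through truncation by level, not through time-localization. A related small point: for the term $-\int_\T u_k(0)\rho(0)$ the paper exploits the convolution structure $u_k(0)=u_0\star\Gamma(1/k)$ and Young's convolution inequality directly, which is what keeps the estimate uniform in $k$ without requiring the preliminary iteration you invoke; and only $\|u_k^+(\tau)\|_{L^p}$ (from Lemma~\ref{pboundP+}, valid at any $q\ge1$) is needed, so no uniform bound on $u_k^-$ has to be secured first.
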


\begin{proof} Let $w = u-u_k$. Note that $w$  depends of course on $k$, but we will drop the subscript for simplicity. 

As before, we argue by duality, and estimate $w^+$ first. Fix $\tau \in (0,T]$ and $\rho = \rho_k$ be the weak nonnegative solution of 
\[
\begin{cases}
-\partial_t\rho(x,t)-\Delta \rho(x,t)+\mathrm{div}\big(D_pH(x,Du_k(x,t))\rho(x,t)\big)=0&\text{ in }Q_\tau\ ,\\
\rho(x,\tau)=\frac{\big[w^+(x, \tau)\big]^{p-1}}{\|w^+(\tau)\|_{p}^{p-1}}&\text{ in }\T.
\end{cases}
\]
Note that as in previous lemmas one should further truncate $\rho(\tau)$ to ensure the existence of $\rho$ in an energy space (cf Proposition \ref{wfkpp}), and then pass to the limit, but we will omit this step for brevity.

\textbf{Step 1:} bounds on $\iint|D_pH(x,Du_k)|^{\gamma'}   \rho$. Testing  \eqref{hjbt} against $\rho$, and testing the equation for $\rho$ against $u_k$ we get that
\begin{equation}\label{stabd}
 \iint_{Q_\tau}L\big(x, D_pH(x, Du_k)\big) \rho \,dxdt = -  \iint_{Q_\tau} T_k(f) \rho\,dxdt - \int_\T u_k(0)\rho(0)\,dx +  \int_\T u_k(\tau)\rho(\tau)\,dx.
\end{equation}
Recall that for all $t$, $\int_\T \rho(t) dx = \int_\T \rho(\tau) dx \le \|\rho(\tau)\|_{L^{p'}(\T)} =  1$. Then, for $h > 0$ that will be chosen below,
\begin{multline*}
-\iint_{Q_\tau} T_k(f) \rho\,dxdt  \le \iint_{Q_\tau} f^- \rho\,dxdt \le \iint_{Q_\tau \cap \{f^- \ge h\}} f^- \rho\,dxdt + h \tau \\ \le \|f^- \chi_{\{f^- \ge h\}}\|_{L^q(Q_\tau)} \| \rho \|_{L^{q'}(Q_\tau)} + h T.
\end{multline*}
Similarly, for any  $h_0 > 0$, by Young's inequality for convolutions and H\"older inequality,
\begin{multline*}
- \int_\T u_k(0)\rho(0)\,dx = - \int_\T u_0 \, \Gamma(1/k) \star \rho(0) \\ \le \int_{\T\cap \{u_0^- \ge h_0\}} u_0^- \, \Gamma(1/k) \star \rho(0) \,dx + h_0 \int_\T \Gamma(1/k) \star \rho(0) dx\\ \le  \|u_0^- \chi_{\{u_0^- \ge h_0\}}\|_{L^p(\T)} \| \Gamma(1/k) \star \rho(0)  \|_{L^{p'}(\T)} + h_0  \int_\T \Gamma(1/k) dx \int_\T \rho(0) dx \\
\le \|u_0^- \chi_{\{u_0^- \ge h_0\}}\|_{L^p(\T)} \|\rho(0)  \|_{L^{p'}(\T)} + h_0  .
\end{multline*}
Finally, applying Lemma \ref{pboundP+} to $u_k$ and noting that $(u_0\star \Gamma(1/k))^+ \le u_0^+ \star \Gamma(1/k)$ by the comparison principle,
\begin{multline*}
\int_\T u_k(\tau)\rho(\tau)\,dx \le \| u_k^+(\tau) \|_{L^p(\T)} \|\rho(\tau)\|_{L^{p'}(\T)} \le \|(u_0\star \Gamma(1/k))^+\|_{L^p(\T)}+C_0 \|T_k(f)^+\|_{L^q(Q_\tau)} \\ \le \|u_0^+\|_{L^p(\T)}+C_0 \|f^+\|_{L^q(Q_\tau)}
\end{multline*}
Pluggin the previous inequalities back in \eqref{stabd}, and using the bounds from below on $L$, we then get
\begin{multline*}
C_L^{-1} \iint_{Q_\tau}|D_pH(x,Du_k)|^{\gamma'}   \rho \,dxdt - C_L \iint_{Q_\tau} \rho \,dxdt \le h T +h_0 + \|u_0^+\|_{L^p(\T)}+C_0 \|f^+\|_{L^q(Q_\tau)} \\
+ \big(\|u_0^- \chi_{\{u_0^- \ge h_0\}}\|_{L^p(\T)} +  \|f^- \chi_{\{f^- \ge h\}}\|_{L^q(Q_\tau)} \big) \big(\| \rho(0) \|_{L^{p'}(\T)} + \| \rho \|_{L^{q'}(Q_\tau)} \big)
\end{multline*}
Then Corollary \ref{corrho} yields bounds on $\rho$, i.e.
\begin{multline*}
C_L^{-1} \iint_{Q_\tau}|D_pH(x,Du_k)|^{\gamma'}   \rho \,dxdt  \le (h +  C_L) T + h_0 + \|u_0^+\|_{L^p(\T)}+C_0 \|f^+\|_{L^q(Q_\tau)} \\
+ C_1\big(\|u_0^- \chi_{\{u_0^- \ge h_0\}}\|_{L^p(\T)} +  \|f^- \chi_{\{f^- \ge h\}}\|_{L^q(Q_\tau)} \big) \left( \iint_{Q_\tau}|D_pH(x,Du_k)|^{\gamma'}  \rho\,dxdt+1\right)
\end{multline*}
for some $C_1$ depending only on $T, d, \gamma$. Finally, $h$ and $h_0$ are chosen large enough so that $$\|u_0^- \chi_{\{u_0^- \ge h_0\}}\|_{L^p(\T)} +  \|f^- \chi_{\{f^- \ge h\}}\|_{L^q(Q_\tau)} \le \frac1{2 C_1 C_L} ,$$
that gives
\begin{equation}\label{crosse}
 \iint_{Q_\tau}|D_pH(x,Du_k)|^{\gamma'}   \rho \,dxdt  \le 2C_L[ (h +  C_L) T + h_0 + \|u_0^+\|_{L^p(\T)}+C_0 \|f^+\|_{L^q(Q_\tau)} ] +1 =: \overline C .
\end{equation}

\textbf{Step 2:} bounds on $w^+$. Taking the difference between \eqref{hjbt} and \eqref{hjb}, by convexity of $H(x, \cdot)$, $w^+$ is a weak subsolution of
\[
\partial_t w^+(x,t)-\Delta w^+(x,t)+D_pH(x,Du_k(x,t)) \cdot D w^+ \le \big[f- T_k\big(f(x,t)\big)\big] \chi_{\{w > 0\}} 
\]
Testing against $\rho$, and testing the equation for $\rho$ against $w^+$ gives
\[
\|w^+(\tau)\|_{L^p(\T)} = \int_{\T} w^+(\tau) \rho(\tau)\, dx \le \int_{\T} w^+(0) \rho(0)\, dx +  \iint_{Q_\tau} \big[f- T_k\big(f(x,t)\big)\big] \chi_{\{w > 0\}}  \rho\,dxdt .
\]
Hence, using Corollary \ref{corrho} and the estimate \eqref{crosse}  we obtain
\begin{multline*}
\|w^+\|_{L^p(\T)} \le C_1 \big( \|w^+(0)\|_{L^p(\T) } + \|f - T_k(f)\||_{L^q(\T) } \big) \left( \iint_{Q_\tau}|D_pH(x,Du_k)|^{\gamma'}  \rho\,dxdt+1\right)\\  \le 
C_1(\overline C+1) \big( \|u_0- u_0\star \Gamma(1/k)\|_{L^p(\T) } + \|f - T_k(f)\||_{L^q(\T) } \big) ,
\end{multline*}
which is ``half'' of the desired estimate.

\medskip

To get an analogous estimate for $\|w^-\|_{L^p(\T)}$, which allows to conclude since $w = u - u_k$, one can proceed as in Step 1 and 2, noting that $w^-$ satisfies
\[
\partial_t w^-(x,t)-\Delta w^-(x,t)+D_pH(x,Du(x,t)) \cdot D w^- \le \big[T_k\big(f(x,t)\big) - f\big] \chi_{\{w < 0\}} .
\]
To argue as before, it is sufficient to exploit properties of the dual problem
\[
\begin{cases}
-\partial_t\hat \rho(x,t)-\Delta \hat \rho(x,t)+\mathrm{div}\big(D_pH(x,Du(x,t))\hat \rho(x,t)\big)=0&\text{ in }Q_\tau\ ,\\
\hat \rho(x,\tau)=\frac{\big[w^-(x, \tau)\big]^{p-1}}{\|w^-(\tau)\|_{p}^{p-1}}&\text{ in }\T.
\end{cases}
\]
\end{proof}

\begin{rem}\label{cdep} Note that \eqref{pboundc} directly yields the estimate $$\sup_{t \in [0,T]}\|u(t)\|_{L^{d\frac{\gamma-1}{2-\gamma}}(\T)} \le C,$$ thus extending \eqref{pbound} up to the critical case $q = (d+2)/\gamma'$. Let us focus on the the way $C$ depends on $f$ and $u_0$. When $q > (d+2)/\gamma'$, $C$ depends on $\|f\|_{L^q}$ and $\|u_0\|_{L^p}$ only. At the endpoint $q = (d+2)/\gamma'$, $C$ above (and similarly the constant in \eqref{pboundc}) is proportional to $\overline C$ defined in \eqref{crosse}, that depends in turn on $ \|u_0^+\|_{L^p(\T)}, \|f^+\|_{L^q(Q_\tau)}, h, h_0$, where $h, h_0$ are such that
\[
\|u_0^- \chi_{\{u_0^- \ge h_0\}}\|_{L^p(\T)} +  \|f^- \chi_{\{f^- \ge h\}}\|_{L^q(Q_\tau)} \le \frac1{2 C_1 C_L} ,
\]
and $C_1 = C_1(T, q, d)$ is as in Corollary \ref{corrho}. Thus, these constants {\it remain bounded when $f$ and $u_0$ vary in bounded and equi-integrable sets in $L^q(Q_T)$ and $L^p(\T)$ respectively}. This is completely in line with results in \cite{Magliocca}.
\end{rem}

\subsection{Bounds in H\"older spaces}

We now proceed with bounds on $u$ in H\"older spaces, which will be obtained in particular when $\gamma \ge 2$, that is when $q$ is necessarily greater than $\frac{d+2}2$ (and therefore $u$ is continuous).

\begin{proof}[Proof of Theorem \ref{mainholder}]
\textbf{Step 1.} Since we have the representation $H(x,Du(x, t))=\sup_{\nu \in \R^d}\{\nu\cdot Du(x, t)-L(x,\nu)\}$ for a.e. $(x,t) \in Q_T$, we get
\begin{multline}\label{HJoptimalLOC}
\int_0^\tau \langle \partial_t u(t), \varphi(t) \rangle dt +  \iint_{Q_{\tau}}  \partial_i u(x, t) \, \partial_j( \varphi(x, t)) + [\Xi(x, t) \cdot Du(x, t)-L(x,\Xi(x, t))] \varphi \, dxdt \\
\le  \iint_{Q_{\tau}} f(x, t) \varphi(x, t) \,dxdt
\end{multline}
for any measurable $\Xi : Q_{\tau} \to \R^d$ such that $L(\cdot, \Xi(\cdot, \cdot)) \in L^r(Q_{\tau})$ and $\Xi\cdot Du \in  L^r(Q_{\tau})$, $r > 1$, and test function $\varphi \in  \H_2^1(Q_{\tau}) \cap L^{r'}(Q_{\tau})$. The previous inequality becomes an equality if $\Xi(x,t) = D_pH(x,Du(x,t))$ in $Q_{\tau}$.

Fix now any $\tau \in [0, T]$, and let $\bar x, \bar y \in \T$ be such that
\[
u(\bar y) - u(\bar x) = |\bar y - \bar x|^\alpha \cdot  [u(\cdot, \tau)]_{C^\alpha(\T)}.
\]
Let $\rho_\tau$ be any smooth non-negative function satisfying $\int_\T \rho_\tau = 1$, and $\rho \in  \H_2^1(Q_\tau) \cap L^{r'}(Q_{\tau})$ (for all $r' > 1$) be the solution to
\[
\begin{cases}
-\partial_t\rho(x,t)-\Delta \rho(x,t)+\mathrm{div}\big(D_pH(x,Du(x,t))\rho(x,t)\big)=0&\text{ in }Q_\tau\ ,\\
\rho(x,\tau)=\rho_\tau(x)&\text{ in }\T.
\end{cases}
\]
Use now \eqref{HJoptimalLOC} with $\Xi(x,t) = D_pH(x,Du(x,t))$ and $\varphi=\rho$, and $u \in \H_2^1(Q_T)$ as a test function for the equation satisfied by $\rho$ to get
\begin{multline}\label{optloc}
\int_{\T}u(x,\tau)\rho_{\tau}(x)dx= \int_{\T}u_0(x)\rho(x,0)dx + \iint_{Q_{\tau}}f(x,t)\rho(x,t)dxdt\\
+\iint_{Q_{\tau}}L\big(x,D_pH(x,Du(x,t))\big)\rho(x,t)dxdt.
\end{multline}
Setting $\xi = \bar y - \bar x$, one can easily check that $\hat{\rho}(x,t):=\rho(x-\xi,t)$ satisfies
\[
\begin{cases}
-\partial_t \hat \rho(x,t)-\Delta \hat\rho(x,t)+\mathrm{div}\big(D_pH(x-\xi,Du(x-\xi,t))\hat \rho(x,t)\big)=0&\text{ in }Q_\tau\ ,\\
\hat \rho(x,\tau)=\rho_\tau(x-\xi)&\text{ in }\T.
\end{cases}
\]
As before, plugging $\Xi(x,t) = D_pH(x-\xi,Du(x-\xi,t))$ and $\varphi =  \hat \rho$ into \eqref{HJoptimalLOC}, and using $u \in \H_2^1(Q_T)$ as a test function for the equation satisfied by $\hat \rho$ yields
\begin{multline*}
\int_{\T}u(x,\tau)\hat{\rho}_\tau(x)dx \le \int_{\T}u_0(x)\hat{\rho}(x,0)dx + \\
\iint_{Q_{\tau}} L(x,D_pH(x-\xi,Du(x-\xi,t))) \hat \rho \, dxdt+ \iint_{Q_{\tau}} f \hat \rho \,dxdt 
\end{multline*}
which, after the change of variables $x - \xi \mapsto x$, becomes
\begin{multline}\label{suboptloc}
\int_{\T}u(x+\xi,\tau)\rho_\tau(x)dx \le \int_{\T}u_0(x+\xi)\rho(x,0)dx  + \\
\iint_{Q_{\tau}} L(x+\xi,D_pH(x,Du(x,t))) \rho \, dxdt+ \iint_{Q_{\tau}} f \hat \rho \,dxdt .
\end{multline}
Taking the difference between \eqref{suboptloc} and \eqref{optloc} we obtain
\begin{multline}\label{step1}
\int_{\T}\big(u(x + \xi, \tau)-  u(x,\tau)\big) {\rho}_{\tau}(x)dx \le 
\int_{\T}\big(u_0(x + \xi)-  u_0(x)\big) \rho(x,0)dx + \\
+ \iint_{Q_{\tau}} \Big(L(x+\xi,D_pH(x,Du(x,t))) - L(x,D_pH(x,Du(x,t)))\Big)\rho(x,t) \, dxdt\\
+ \iint_{Q_{\tau}} f(x,t) \big(\rho(x-\xi, t)- \rho(x,t) \big) \,dxdt . 
\end{multline}

\textbf{Step 2.} To estimate the terms appearing in the right hand side of \eqref{step1}, we first derive bounds on $\rho$. We stress that constants $C, C_1, \ldots$ below are not going to depend on $\tau$ and $\rho_\tau$. Rearranging \eqref{optloc} we have
\begin{multline*}
\iint_{Q_{\tau}}L\big(x,D_pH(x,Du(x,t))\big)\rho(x,t)dxdt = \int_{\T}u(x,\tau)\rho_{\tau}(x)dx - \int_{\T}u_0(x)\rho(x,0)dx \\ - \iint_{Q_{\tau}}f(x,t)\rho(x,t)dxdt,
\end{multline*}
and by \eqref{L1} and bounds on $\|u\|_\infty$ of Proposition \ref{pboundP} we get
\[
C_L^{-1} \iint_{Q_\tau} |D_pH(x,Du(x,t))|^{\gamma'} \rho(x,t) dx dt \le C + \|f\|_{L^q(Q_\tau)}\|\rho\|_{L^{q'}(Q_\tau)}.
\]
Using Corollary \ref{corrho},
\begin{multline*}
C_L^{-1} \iint_{Q_\tau} |D_pH(x,Du(x,t))|^{\gamma'} \rho(x,t) dx dt \le C \\ 
+ C_1\|f\|_{L^q(Q_\tau)}\left(\iint_{Q_\tau}|D_pH(x,Du(x,t))|^{\frac{d+2}q}\rho\,dxdt+\|\rho_\tau\|_{L^{p'}(\T)}\right).
\end{multline*}
This provides a control on $\iint_{Q_\tau} |D_pH(Du)|^{\gamma'} \rho$, and by means of Proposition \ref{new2},
\begin{equation}\label{estik}
\iint_{Q_\tau} |D_pH(x,Du(x,t))|^{\gamma'} \rho(x,t) dx dt + \|\rho\|_{\H^1_{\frac{d+2}{d+3-\gamma'}}(Q_\tau)} \le C_2.
\end{equation}

\textbf{Step 3.}
First, recalling that $\int_{\T} \rho(0) = 1$,
\[
\int_{\T}\big(u_0(x + \xi)-  u_0(x)\big) \rho(x,0)dx \le |\xi|^\alpha [u_0]_{C^\alpha(\T)}
\]

As for the second term in \eqref{step1}, $L(x, v) = \sup_{p \in \R^d}\{v \cdot p-H(x,p)\}$, and if $v = D_pH(x,p)$, then $L(x, v) = \nu \cdot p - H(x, p)$, hence
\[
L(x+\xi,D_pH(x,Du(x,t))) - L(x,D_p H(x,Du(x,t)) \le H(x, Du(x,t)) - H(x+\xi, Du(x,t)).
\]
Next, using \eqref{Ha},
\begin{multline*}
\iint_{Q_{\tau}} \Big(L(x+\xi,D_pH(x,Du(x,t))) - L(x,D_pH(x,Du(x,t)))\Big)\rho(x,t) \, dxdt  \\
\le \iint_{Q_{\tau}} \Big( H(x, Du(x,t)) - H(x+\xi, Du(x,t)) \Big)\rho(x,t) \, dxdt  \\
\le C_H |\xi|^\alpha \iint_{Q_{\tau}} \Big( |D_pH(x,Du(x,t))|^{\gamma'}+1 \Big)\rho(x,t) \, dxdt.
\end{multline*}

Finally, we apply the embeddings of Propositions \ref{embnik} and \ref{emb1} (with $\delta=q'$, $p=\frac{d+2}{d+3-\gamma'}$ and hence $\alpha=\gamma'-\frac{d+2}{q}$) to get
\begin{multline*}
\left|  \iint_{Q_{\tau}} f(x,t) \big(\rho(x-\xi, t)- \rho(x,t) \big) \,dxdt \right |   \\
\le |\xi|^\alpha \iint_{Q_{\tau}} |f(x,t)|\,\frac{|\big(\rho(x-\xi, t)- \rho(x,t) \big)|}{|h|^\alpha} \,dxdt \le |\xi|^\alpha \|f\|_{L^{q}(Q_{\tau})} \|\rho\|_{L^{q'}(N^{\alpha,q'}(\T))}\\
\leq  C|\xi|^\alpha \|f\|_{L^{q}(Q_{\tau})} \|\rho\|_{L^{q'}(W^{\alpha,q'}(\T))}
\leq  |\xi|^\alpha \|f\|_{L^{q}(Q_{\tau})}\|\rho\|_{\H_{\frac{d+2}{d+3-\gamma'}}^1(Q_{\tau})}.
\end{multline*}
Plugging now all the estimates in \eqref{step1} and using \eqref{estik} we obtain
\begin{equation*}
 \int_{\T}\big(u(x + \xi, \tau)-  u(x,\tau)\big) {\rho}_{\tau}(x)dx \le C_1 |\xi|^\alpha.
\end{equation*}
It is now sufficient to recall that $\rho_\tau$ can be any smooth non-negative function satisfying $\int_\T \rho_\tau = 1$, so
\[
|\bar y - \bar x|^\alpha \cdot  [u(\cdot, \tau)]_{C^\alpha(\T)} = u(\bar y) - u(\bar x) \le C_1 |\bar y - \bar x|^\alpha
\]
and we have the assertion.
\end{proof}

\begin{rem}\label{holder2} Adding an additional time localization term in the previous procedure, as in \cite{CG2}, it is possible to obtain H\"older bounds that are independent of the initial datum, but just depend on the sup-norm of the solution. This indicates that the equation regularizes at H\"older scales, and weak solutions (in an appropriate sense) become instantaneously H\"older continuous at positive times. 
\end{rem}

\section{Maximal $L^q$-regularity}\label{sec;max}
We start with a straightforward consequence of parabolic regularity results for linear equations.
\begin{prop}
Assume that \eqref{H} holds. Let $u$ be a strong solution to \eqref{hjb} in $W^{2,1}_q(Q_T)$. Then,
\begin{equation}\label{pertheat}
\|u\|_{W^{2,1}_q(Q_T)}\leq C(\| Du \|_{L^{\gamma q}(Q_T)}^\gamma+\|f\|_{L^q(Q_T)}+\|u_0\|_{W^{2-\frac{2}{q},q}(\T)}+1)
\end{equation} 
for some positive constant $C$ depending on $q, d, C_H$.
\end{prop}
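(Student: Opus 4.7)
The plan is to treat the Hamilton-Jacobi equation as a linear heat equation with right-hand side $f(x,t) - H(x, Du(x,t))$ and then invoke standard parabolic maximal $L^q$-regularity (in the classical form of Ladyzhenskaya-Solonnikov-Ural'tseva, or via \cite{Lamberton,HP,DHP,PS,PrussSimonett}). Since $u \in W^{2,1}_q(Q_T)$ is a strong solution, we have that $u$ solves
\[
\partial_t u - \Delta u = f - H(\cdot, Du) \qquad \text{a.e. in } Q_T,
\]
with initial datum $u(0) = u_0 \in W^{2-\frac{2}{q},q}(\T)$ (which is the natural trace space via the Lions-Peetre trace method, see \eqref{initdat} and the discussion preceding Section \ref{firstbounds}).

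First, I would apply maximal $L^q$-regularity for the heat equation on the torus, which yields a constant $C_0 = C_0(q,d)$ such that
\[
\|u\|_{W^{2,1}_q(Q_T)} \leq C_0 \Big( \|f - H(\cdot, Du)\|_{L^q(Q_T)} + \|u_0\|_{W^{2-\frac{2}{q},q}(\T)} \Big).
\]
By the triangle inequality, it suffices to control $\|H(\cdot, Du)\|_{L^q(Q_T)}$.

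Second, using the upper bound in assumption \eqref{H}, namely $H(x,p) \leq C_H(|p|^\gamma + 1)$, together with the embedding of $W^{2,1}_q(Q_T)$ into $W^{1,0}_{\gamma q}(Q_T)$ (which holds under our running range of $q$ and $\gamma$, as discussed at the beginning of Section \ref{firstbounds}), we have
\[
\|H(\cdot, Du)\|_{L^q(Q_T)} \leq C_H \big( \|Du\|_{L^{\gamma q}(Q_T)}^{\gamma} + |Q_T|^{1/q} \big).
\]
Combining the two inequalities and absorbing the volume term $|Q_T|^{1/q}$ into the additive constant $1$ (multiplied by a suitable factor depending on $T$ and $d$) produces the stated estimate with $C = C(q, d, C_H)$.

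The proposition is essentially bookkeeping, so I do not expect any real obstacle: the only subtlety is that maximal regularity for the heat equation must be applied with the correct trace space $W^{2-\frac{2}{q},q}(\T)$ for the initial datum, and this is exactly the setting recalled in Section \ref{par}. Note that at this stage no lower bound on $q$ is being used---the growth hypothesis \eqref{H} suffices by itself---so the inequality \eqref{pertheat} is purely a linear-regularity statement, deferring the nonlinear work (i.e., bounding $\|Du\|_{L^{\gamma q}(Q_T)}$) to the subsequent arguments of Section \ref{sec;max}.
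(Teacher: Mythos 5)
Your proposal is correct and matches the paper's own proof essentially verbatim: treat the equation as a heat equation with potential $V = f - H(\cdot,Du)$, invoke linear maximal $L^q$-regularity with trace datum in $W^{2-2/q,q}(\T)$, and bound $\|H(\cdot,Du)\|_{L^q}$ via the upper bound in \eqref{H}. The aside about the embedding $W^{2,1}_q \hookrightarrow W^{1,0}_{\gamma q}$ is not strictly needed here, since the estimate is vacuously true when $\|Du\|_{L^{\gamma q}}=\infty$; otherwise it is exactly the argument the authors give.
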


\begin{proof}
The proof is an easy consequence of well-known Cald\`eron-Zygmund type maximal regularity results for heat equations with potential
\[
\begin{cases}
\partial_t u(x,t)-\Delta u(x,t)=V(x,t)&\text{ in }Q_\tau\ ,\\
u(x,0)=u_0(x)&\text{ in }\T,
\end{cases}
\]
which satisfies the estimate (see \cite{Lamberton,LSU}, or \cite{HP} and the references therein)
\[
\|u\|_{W^{2,1}_q(Q_T)}\leq C(\|V\|_{L^q(Q_T)}+\|u_0\|_{W^{2-\frac{2}{q},q}(\T)}).
\]
To get \eqref{pertheat}, it is now sufficient to choose $V = -H(x,Du) + f$, and use the assumption \eqref{H}.
\end{proof}

We now proceed with our main result on maximal regularity for \eqref{hjb} when $q > (d+2)/\gamma'$.

\begin{proof}[Proof of Theorem \ref{maxreg1}] To prove the assertion, we will combine the estimates derived in Section \ref{firstbounds} with Gagliardo-Nirenberg type interpolation inequalities.

\smallskip

{\it The subquadratic case $\gamma < 2$. } We start from Proposition \ref{pboundP}, which gives
\begin{equation}\label{sboun}
\sup_{t \in [0,T]}\|u(t)\|_{L^s(\T)} \le C,
\end{equation}
for any $s \le p = \frac{d q}{(d+2)-2q}$ if $q < \frac{d+2}2$, while $s \le \infty$ if $q > \frac{d+2}2$. Recall then the classical Gagliardo-Nirenberg inequality (\cite[Lecture II Theorem p.125-126]{N})
\begin{equation}\label{gn1}
\|Du(t)\|_{L^{\gamma q}(\T)}\leq C_1\| u(t)\|^\theta_{W^{2,q}(\T)}\|u(t)\|_{L^s(\T)}^{1-\theta}
\end{equation}
for $s \in [1,\infty]$ and $\theta \in [1/2, 1)$ satisfying
\[
\frac1{\gamma q}=\frac{1}{d}+\theta\left(\frac1q-\frac{2}{d}\right)+(1-\theta)\frac1s\ .
\]
Note that since $q > \frac{d+2}{\gamma'}$, we have $p>\frac{d(\gamma-1)}{2-\gamma}$, and therefore it is possible to choose $s$ (close to $\frac{d(\gamma-1)}{2-\gamma}$) so that $\theta \in [1/2, 1/\gamma)$ and \eqref{sboun} and \eqref{gn1} holds. Then, raising \eqref{gn1} to $\gamma q$ and integrating on $(0,T)$ yields 
\[
\int_0^T \|Du(t)\|^{\gamma q}_{L^{\gamma q}(\T)} dt \leq C_1^{\gamma q}\left(\sup_{t \in [0,T]} \|u(t)\|_{L^s(\T)}^{1-\theta}\right)^{\gamma q}
 \int_0^T \| u(t)\|^{\gamma \theta q} _{W^{2,q}(\T)} dt ,
\]
and since $\gamma\theta < 1$,
\begin{equation}\label{GNg}
\|Du\|_{L^{\gamma q}(Q_T)}\leq C_2 \|u\|_{L^{q}(0,T;W^{2,q}(\T))}^\theta\|u\|_{L^\infty(0,T;L^{s}(\T))}^{1-\theta}.
\end{equation}
Plugging \eqref{sboun} and \eqref{GNg} into \eqref{pertheat} we obtain
\[
\|u\|_{W^{2,1}_q(Q_T)}
\leq C_3(\|u\|^{\theta \gamma}_{W^{2,1}_q(Q_T)}+\|f\|_{L^q(Q_T)}+\|u_0\|_{W^{2-\frac{2}{q},q}(\T)}),
\]
and we conclude the assertion because $\theta \gamma < 1$.
\smallskip

{\it The superquadratic case $\gamma \ge 2$. } We start from H\"older bounds of Theorem \ref{mainholder}, namely
\begin{equation}\label{abound}
\sup_{t \in [0,T]}\|u(t)\|_{C^\alpha(\T)} \le C_1,
\end{equation}
where $\alpha = \gamma'-\frac{d+2}{q}$ (or $\alpha \in (0,1)$ when $q \ge \frac{d+2}{\gamma'-1}$), and invoke the following Miranda-Nirenberg interpolation inequality (see \cite{NHolder,Miranda, Maugeri})
\[
\|Du(t)\|_{L^{\gamma q}(\T)}\leq C\|u(t)\|_{W^{2,q}(\T)}^\theta\|u(t)\|_{C^{\alpha}(\T)}^{1-\theta},
\]
where $\theta\in\left[\frac{1-\alpha}{2-\alpha},1\right)$ satisfies
\begin{equation*}\label{condGN1}
\frac{1}{\gamma q}=\frac{1}{d}+\theta\left(\frac{1}{q}-\frac{2}{d}\right)-(1-\theta)\frac{\alpha}{d}.
\end{equation*}
Choosing $\theta = \frac{1-\alpha}{2-\alpha}$ (or $\alpha$ close enough to $1$ when $q \ge \frac{d+2}{\gamma'-1}$), we have $\theta \gamma < 1$ if and only if 
\[
q>\frac{(d+2)(\gamma-1)}{2}.
\]
Hence,
\[
 \|Du\|_{L^{\gamma q}(Q_T)}\leq C C^{1-\theta}_1 \|u\|_{L^q(0,T;W^{2,q}(\T))}^\theta.
\]
Plugging this inequality into \eqref{pertheat} and using the fact that $\gamma \theta < 1$, we conclude.

\end{proof}

We now consider the maximal regularity problem in the limiting case $q = (d+2)/\gamma'$. The scheme of the proof is similar to the one of Theorem  \ref{maxreg1}, but requires an additional step involving solutions $u_k$ to the regularized problem \eqref{hjbt}, that is; for $k >0$,
\begin{equation}
\begin{cases}
\partial_t u_k(x,t)-\Delta u_k(x,t)+H(x,Du_k(x,t))=T_k\big(f(x,t)\big)&\text{ in }Q_T = \T \times (0,T)\ ,\\
u_k(x,0)=u_0(\cdot) \star \Gamma(\cdot, 1/k) \, (x) &\text{ in }\T.
\end{cases}
\end{equation}

\begin{proof}[Proof of Theorem \ref{maxregc}] Let $w = u - u_k$, $k$ to be chosen. From Proposition \ref{stability}, we have the existence of $C$ depending on $f, u_0, C_H, q, d, T$ such that
\[
\sup_{t \in [0,T]}\|w(t)\|_{L^p(\T)} \le C\big(\|f- T_k(f)\|_{L^q(Q_T)} + \|u_0- u_0\star \Gamma(1/k)\|_{L^p(\T)} \big), \quad p = d \frac{\gamma-1}{2-\gamma}.
\]
The Gagliardo-Nirenberg inequality reads
\[
\|Dw(t)\|_{L^{\gamma q}(\T)}\leq C_2 \| w(t)\|^{\frac1\gamma}_{W^{2,q}(\T)}\|w(t)\|_{L^p(\T)}^{1-\frac1\gamma},
\]
where $C_2$ depends on $d, p, q$. Thus,
\begin{equation}\label{eqrgj}
 \|Dw\|^\gamma_{L^{\gamma q}(Q_T)}\leq C_2^\gamma C^{\gamma-1}\big(\|f- T_k(f)\|_{L^q(Q_T)} + \|u_0- u_0\star \Gamma(1/k) \|_{L^p(\T)} \big)^{\gamma-1} \|w\|_{L^q(0,T;W^{2,q}(\T))}.
\end{equation}
Note now that $w$ solves a.e. on $Q_T$
\begin{equation}\label{lineq}
\partial_t w(x,t)-\Delta w(x,t)=H(x,Du_k(x,t)) - H(x,Du(x,t)) +f (x,t) - T_k\big(f(x,t)\big),
\end{equation}
and that, by assumptions on $H$, $|D_pH(x,p)| \le C_H' (|p|^{\gamma-1}+1)$, so by Young's inequality
\begin{align*}
|H(x,Du_k(x,t)) - H(x,Du(x,t))| & \le |Dw(x,t)| \cdot \max\{|D_pH(x,Du_k(x,t))|, |D_pH(x,Du(x,t))|\} \\
& \le C_3 (|Du_k(x,t)|^\gamma + |Du(x,t)|^\gamma+ |Dw(x,t)|^\gamma +1 ) \\
& \le C_4 (|Du_k(x,t)|^\gamma + |Dw(x,t)|^\gamma +1 )\ ,
\end{align*}
where $C_3, C_4$ depend on $C_H$ only.
Then, by maximal regularity applied to the linear equation \eqref{lineq},
\begin{multline*}
\|w\|_{L^q(0,T;W^{2,q}(\T))} \le \\ C_5 \Big( \| H(x,Du_k) - H(x,Du) \|_{L^q(Q_T)} + \|f - T_k\big(f\big) \|_{L^q(Q_T)} + \|u_0- u_0\star \Gamma(1/k)\|_{W^{2-\frac{2}{q},q}(\T)}\Big) \le \\
C_6\| Dw \|^\gamma_{L^{\gamma q}(Q_T)} + C_6 \Big(\| Du_k \|^\gamma_{L^{\gamma q}(Q_T)}+ \|f - T_k\big(f\big) \|_{L^q(Q_T)} + \|u_0- u_0\star \Gamma(1/k) \|_{W^{2-\frac{2}{q},q}(\T)} + 1 \Big) ,
\end{multline*}
where $C_6$ depends on $C_H ,d,q$. Plugging now this inequality into \eqref{eqrgj} yields 
\[
 \|Dw\|^\gamma_{L^{\gamma q}(Q_T)}\leq C_6 C_2^\gamma C^{\gamma-1}\big(\|f- T_k(f)\|_{L^q(Q_T)} + \|u_0- u_0\star \Gamma(1/k)\|_{L^p(\T)} \big)^{\gamma-1} \| Dw \|^\gamma_{L^{\gamma q}(Q_T)} + \cdots.
\]
Hence, we choose $\bar k$ large enough so that
\begin{equation}\label{kchoice}
C_6 C_2^\gamma C^{\gamma-1}\big(\|f- T_{\bar k}(f)\|_{L^q(Q_T)} + \|u_0- u_0\star \Gamma(1/\bar k)\|_{L^p(\T)} \big)^{\gamma-1} \le \frac12, 
\end{equation}
to get
\begin{multline}\label{stimadw}
\|Dw\|^\gamma_{L^{\gamma q}(Q_T)}\leq 
\| Du_{\bar k} \|^\gamma_{L^{\gamma q}(Q_T)} + \|f - T_{\bar k}\big(f\big) \|_{L^q(Q_T)} + \|u_0- u_0\star \Gamma(1/{\bar k})\|_{W^{2-\frac{2}{q},q}(\T)} + 1  \\
\leq \| Du_{\bar k} \|^\gamma_{L^{\gamma q}(Q_T)} + 2\|f \|_{L^q(Q_T)} + 2\|u_0\|_{W^{2-\frac{2}{q},q}(\T)} + 1 .
\end{multline}

Since $u_{\bar k}(0)$ is smooth and $T_{\bar k}(f) \in L^\infty(Q_T)$, we can apply Theorem \ref{maxreg1} to $u_{\bar k}$ solving \eqref{hjbt} to estimate $Du_{\bar k}$. Indeed, pick any $\bar q > q$. Then,
\[
\|T_{\bar k}(f)\|_{L^{\bar q}(Q_T)}+\|u_0\star \Gamma(1/{\bar k})\|_{W^{2-\frac{2}{\bar q},\bar q}(\T)} \le \bar k + C_5  {\bar k}^{ \left( \frac d2 + 1\right)\left( \frac 1q - \frac1{\bar q}\right)} \|u_0\|_{W^{2-\frac{2}{q},q}(\T)}
\]
in view of standard decay estimates for the heat equation ($C_5$ depends on $d, q ,\bar q$ only, see e.g. \cite[Chapter 15]{taylor}).
Therefore, by Theorem \ref{maxreg1},
\begin{equation}\label{stimaduk}
\| Du_{\bar k} \|^\gamma_{L^{\gamma \bar q}(Q_T)} \le C_{\bar k},
\end{equation}
where $C_{\bar k}$ depends on ${\bar k}, \|u_0\|_{W^{2-\frac{2}{q},q}(\T)}, q, d, C_H, T$. Actually, $Du$ can be proven to be bounded in $L^\infty(Q_T)$, see \cite{CG2}. It is now straightforward to conclude. Indeed, 
\[
\|Du\|_{L^{\gamma q}(Q_T)} \leq \|Dw\|_{L^{\gamma q}(Q_T)} + \|Du_{\bar k}\|_{L^{\gamma q}(Q_T)},
\]
and the assertion follows by \eqref{stimadw} and \eqref{stimaduk}.
\end{proof}
 
\begin{rem}\label{remlim} We claim that $C$ appearing in the statement of Theorem \ref{maxregc} remains bounded when
\begin{itemize}
\item[$\bullet$] \ $f$ varies in a bounded and {\it equi-integrable} set $\mathcal F \subset L^q(Q_T)$, and
\item[$\bullet$] \ $u_0$ varies in a bounded set  $\mathcal U_0 \subset W^{2-2/q,q}(\T)$.
\end{itemize}
Indeed, in addition to $\|u_0\|_{W^{2-\frac{2}{q},q}(\T)}, q, d, T, C_H$, the constant $C$ crucially depends on $\bar k$ appearing in \eqref{kchoice}. This is chosen in the proof large enough so that
\[
\big(\|f- T_{\bar k}(f)\|_{L^q(Q_T)} + \|u_0- u_0\star \Gamma(1/\bar k)\|_{L^p(\T)} \big) \le c.
\]
In turn, $c = (2 C_6 C_2^\gamma C^{\gamma-1})^{-1}$ is independent of $f \in \mathcal F$ and $u_0 \in \mathcal U_0$, since they vary in bounded and equi-integrable sets in $L^q(Q_T)$ and $L^p(\T)$ respectively, cf. Remark \ref{cdep}. Note that by Sobolev embeddings, the closure of $\mathcal U_0$ in $L^p$ (with $p,q$ as above) is compact in $L^p(\T)$, and hence weakly compact and $L^p$-equi-integrable by the Dunford-Pettis theorem, see \cite[Theorem 4.30]{Brezis}.

Hence, we just need to verify that for $c > 0$, there exists $k$ independent of $u_0 \in \mathcal U_0$ such that
\[
 \|u_0- u_0\star \Gamma(1/k)\|_{L^p(\T)} \le c.
\]
This follows again by compactness of $\mathcal U_0$ in $L^p$, which can be covered by finitely many balls $B_{c/3}(u_j)$ in $L^p(\T)$. Choosing $k$ large so that
\[
\|u_j- u_j\star \Gamma(1/k)\|_{L^p(\T)} \le c/3 \quad \text{for all $j$,}
\]
we get, for $u_0 \in B_{c/3}(u_j)$,
\begin{multline*}
\|u_0- u_0\star \Gamma(1/k)\|_{L^p(\T)} \le  \\ \|u_0- u_j\|_{L^p(\T)} +  \|u_j - u_j\star \Gamma(1/k)\|_{L^p(\T)} +  \|u_j - u_0\|_{L^p(\T)} \| \Gamma(1/k)\|_{L^1(\T)}  \le c.
\end{multline*}

\end{rem}

\begin{rem}
One can implement the same scheme to handle more general Hamilton-Jacobi equations of the form
\[
\partial_tu-\sum_{i,j}a_{ij}(x,t)\partial_{ij}u(x,t)+H(x,Du)=f(x,t)
\]
where $A\in C([0,T];W^{2,\infty}(\T))$ and $\lambda I_d\leq A\leq \Lambda I_d$ for $0<\lambda\leq \Lambda$. In particular, one has to appropriately adjust the proofs of the integral and H\"older estimates, following \cite{CG2}, and use the linear maximal regularity results in \cite{PS}.
\end{rem}

\section{Applications to Mean Field Games}\label{mfgs}
For a given couple $u_T, m_0 \in C^3(\T)$, consider the MFG system
\begin{equation*}
\begin{cases}
-\partial_t u-\Delta u+H(x,Du)=g(m(x,t))&\text{ in }Q_T\\
\partial_tm-\Delta m-\mathrm{div}(D_pH(x,Du)m)=0&\text{ in }Q_T \\
m(0) = m_0, \quad u(T)=u_T &\text{ in }\T.
\end{cases}
\end{equation*}
\subsection{The monotone (or defocusing) case}

\begin{proof}[Proof of Theorem \ref{mfg1}]
We argue that under the restrictions on $r$ it is possible to prove a priori bounds on second order derivatives of solutions to \eqref{mfgdef} (and beyond, assuming additional regularity of the data). These are typically enough to prove existence theorems. One may indeed set up a fixed-point method, or a regularization procedure, which consists in replacing $g(m)$ by $g(m\star \chi_\eps)\star\chi_\eps$ (where $\chi_\eps$ is a sequence of standard symmetric mollifiers). The existence of a solution $(m_\eps, u_\eps)$ is then standard (see e.g. \cite{Gomesbook}). Since bounds on $(m_\eps, u_\eps)$ do not depend on $\eps > 0$, it is therefore possible to pass to the limit and obtain a solution to \eqref{mfgdef}.

The key a priori bound is stated in the next Lemma \ref{estm}. Once bounds for $u$ in $W^{2,1}_q(Q_T)$, $q > (d+2)/\gamma'$, are established, one can indeed improve the estimates via a rather standard bootstrap procedure involving parabolic regularity for linear equations. Indeed, $D_p H(x,Du)$ turns out to be bounded in $L^p$ for some $p > d+2$, which is the usual Aronson-Serrin condition yielding space-time H\"older continuity of $m$ on the whole cylinder (see e.g. \cite[Theorem III.10.1]{LSU}). We can then use Theorem \ref{maxreg1} to conclude that $u$ in $W^{2,1}_q(Q_T)$ for any $q > d+2$. This immediately implies by embeddings of $W^{2,1}_q(Q_T)$ that $u$ is bounded in $C^{1+\delta,\frac{1+\delta}{2}}(Q_T)$ for any $\delta \in (0,1)$. Then, we can regard the Hamilton-Jacobi equation as a heat equation with a space-time H\"older continuous source, and by \cite[Section IV.5.1]{LSU} conclude that $u_\eps$ is bounded in $C^{2+\delta',\frac{1+\delta'}{2}}(Q_T)$ independently of $\eps$. One then goes back to the Fokker-Planck equation to deduce that $m$ also enjoys $C^{2+\delta',\frac{1+\delta'}{2}}(Q_T)$ bounds.
\end{proof}

Below we state and prove the crucial a priori estimate on solutions to \eqref{mfgdef}.

\begin{lemma}\label{estm}
Let $(u,m)$ be a classical solution to \eqref{mfgdef}. Under the assumptions of Theorem \ref{mfg1}, there exists a constant $C>0$ such that
\[
\|u\|_{W^{2,1}_q(Q_T)}+\|Du\|_{L^{\gamma q}(Q_T)} \leq C, \qquad q > \frac{d+2}{\gamma'}
\]
for some positive constant $C$ (depending only on the data).
\end{lemma}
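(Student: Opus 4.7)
The plan is to produce an a priori $L^q(Q_T)$ bound on $g(m)$ for some $q>(d+2)/\gamma'$, and then invoke Theorem~\ref{maxreg1} applied to the time-reversed Hamilton--Jacobi equation. After the substitution $t\mapsto T-t$, the first equation of \eqref{mfgdef} becomes a forward parabolic HJ problem with initial datum $u_T\in C^3(\T)\subset W^{2-2/q,q}(\T)$ and right-hand side $g(m(T-\cdot,\cdot))$. Once the latter belongs to $L^q$ for some $q$ above the maximal regularity threshold, Theorem~\ref{maxreg1} yields $\|u\|_{W^{2,1}_q(Q_T)}+\|Du\|_{L^{\gamma q}(Q_T)}\le C$ directly, so the whole problem reduces to controlling $\|g(m)\|_{L^q}$.

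A preliminary uniform $L^\infty$ bound on $u$ is obtained by comparison/representation arguments (the coercivity of $H$ and smoothness of $u_T$, together with the lower bound $g(m)\ge g(0)$ coming from \eqref{fde}, produce two-sided constant bounds on $u$ independent of $m$). Testing the HJ equation against $m$ and the FP equation against $u$, summing, and using the convex duality $D_pH(x,Du)\cdot Du - H(x,Du) = L(x,D_pH(x,Du))$, I then obtain the $mu$ identity
\[
\iint_{Q_T}\bigl[m\,L(x,D_pH(x,Du))+m\,g(m)\bigr]\,dxdt \le C,
\]
and combined with \eqref{H}, \eqref{fde} and \eqref{L1} this gives $\iint m|Du|^\gamma \le C$ and $\iint m^{r+1}\le C$. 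The second-order estimate is produced by differentiating HJ in $x_i$, testing against $\partial_i m$, and using the FP equation to cancel the cross time-derivative term; \eqref{H2} together with the lower bound on $g'$ in \eqref{fde} then gives
\[
\iint_{Q_T} m\,\mathrm{Tr}\bigl(D^2_{pp}H(x,Du)\,D^2u\,D^2u\bigr)\,dxdt + \iint_{Q_T}\bigl|Dm^{(r+1)/2}\bigr|^2\,dxdt \le C.
\]

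The control $m^{(r+1)/2}\in L^2(0,T;W^{1,2}(\T))$, together with mass conservation $m\in L^\infty(0,T;L^1(\T))$, is then fed into a parabolic Gagliardo--Nirenberg/Sobolev interpolation, which upgrades $m$ to $L^Q(Q_T)$ for an explicit exponent $Q=Q(d,r)$; in the super-quadratic regime \eqref{H2} additionally yields $\iint m|D^2u|^2 \le C$, providing a parallel source of integrability. The restrictions on $r$ in the statement of Theorem~\ref{mfg1} are exactly what is needed in order that $g(m)=m^r\in L^q(Q_T)$ with $q>(d+2)/\gamma'$, whence Theorem~\ref{maxreg1} closes the argument.

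The main obstacle I anticipate is the sharp bookkeeping of the Sobolev exponents so that the explicit $Q(d,r)$ matches the thresholds $r<\tfrac{\gamma' d}{(d-2)(d+2-\gamma')}$ and $r<\tfrac{2}{d(\gamma-1)-2}$ stated in Theorem~\ref{mfg1}. In particular, reaching these sharp thresholds will likely require a bootstrap loop: a preliminary $L^{q_0}$ bound on $g(m)$ (from the direct parabolic embedding) is fed into Theorem~\ref{maxreg1} to improve the integrability of $Du$, and hence of the drift $D_pH(x,Du)$; reinjecting this gain into the Fokker--Planck estimates of Corollary~\ref{corrho} upgrades the integrability of $m$, and the procedure is iterated until $q$ exceeds $(d+2)/\gamma'$. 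The constraints on $r$ in Theorem~\ref{mfg1} are precisely what forces this iteration to converge.
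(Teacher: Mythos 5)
Your overall strategy is the right one and matches the paper's: obtain a priori first-order and second-order estimates from testing the coupled system, deduce an $L^q$ bound on $g(m)$ for $q>(d+2)/\gamma'$, and then invoke Theorem~\ref{maxreg1} on the time-reversed HJ equation. The first-order estimate $\iint m|Du|^\gamma+\iint m^{r+1}\le C$ and the second-order estimate $\iint m\,\mathrm{Tr}(D^2_{pp}H(D^2u)^2)+\iint|Dm^{(r+1)/2}|^2\le C$ are exactly the ones in the paper (Steps 1 and 2 of the printed proof).

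The gap is in how you convert these into integrability of $m$. You propose to combine the bound $m^{(r+1)/2}\in L^2(0,T;W^{1,2}(\T))$ with \emph{mass conservation} $m\in L^\infty(0,T;L^1(\T))$ and a parabolic Gagliardo--Nirenberg inequality. Running through the exponents, this gives $m\in L^{(r+1)(1+2/(d(r+1)))}(Q_T)$ and hence $g(m)\sim m^r\in L^{q_0}(Q_T)$ with $q_0>(d+2)/\gamma'$ only under the restriction
\[
r<\frac{\gamma'(d+2)}{d\,(d+2-\gamma')},
\]
which is strictly smaller than the threshold $r<\frac{\gamma' d}{(d-2)(d+2-\gamma')}$ claimed in Theorem~\ref{mfg1} (the two thresholds differ by the factor $\frac{d^2-4}{d^2}<1$). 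The paper closes this gap with an extra, non-interpolation ingredient that is absent in your sketch: Lemma~\ref{mfurtherreg}, a Fokker--Planck regularity result that exploits the \emph{second-order} information on the drift. Using that $b=-D_pH(x,Du)$ satisfies $\iint|\mathrm{div}\,b|^2 m\le C$ (a consequence of the second-order estimate together with \eqref{H2}; a weighted variant is needed when $\gamma>2$), one gets the genuinely stronger bound $m\in L^\infty(0,T;L^{d/(d-2)}(\T))$ in the sub-quadratic case (or $L^\infty(0,T;L^{d(\gamma-1)/(d(\gamma-1)-2)}(\T))$ if $\gamma>2$). Feeding this into the parabolic interpolation, instead of mere mass conservation, is exactly what produces the sharp exponent on $r$. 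Your phrase ``$\iint m|D^2u|^2\le C$ provides a parallel source of integrability'' hints at this, but you never turn it into a bound of the type $\|m\|_{L^\infty(L^p)}\le C$ with $p>1$, which is the decisive step.

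Your proposed bootstrap does not repair this. You suggest feeding a preliminary $L^{q_0}$ bound on $g(m)$ into Theorem~\ref{maxreg1}, improving $Du$, reinjecting into Corollary~\ref{corrho}, and iterating ``until $q$ exceeds $(d+2)/\gamma'$.'' But Theorem~\ref{maxreg1} only applies once $q_0$ already exceeds $(d+2)/\gamma'$; for $r$ in the range $\bigl[\frac{\gamma'(d+2)}{d(d+2-\gamma')},\frac{\gamma'd}{(d-2)(d+2-\gamma')}\bigr)$ the naive interpolation gives $q_0\le(d+2)/\gamma'$, so the iteration cannot be started. Moreover, Corollary~\ref{corrho} with the available control $\iint|b|^{\gamma'}m\le C$ only reproduces the threshold $q=(d+2)/\gamma'$, and gives $m\in L^\infty(L^{d/(d+2-\gamma')})$, which is weaker than the $L^\infty(L^{d/(d-2)})$ that Lemma~\ref{mfurtherreg} supplies for the relevant range of $\gamma$. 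In short, you are missing the Fokker--Planck lemma (Lemma~\ref{mfurtherreg}) and, with it, the entire Step~3 of the proof; without it, the proof covers a strictly smaller range of $r$ than claimed.
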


\begin{proof} {\bf Step 1.} First order estimates. These are standard (see e.g. \cite[Proposition 6.6]{Gomesbook}), and easily obtained by testing the Hamilton-Jacobi equation with $m-m_0$ and the Fokker-Planck equation with $u-u_T$; using
the standing assumptions on $H$ and $f$ one obtains
\begin{equation}\label{foest}
\iint_{Q_T}|Du|^\gamma m \, dxdt + \iint_{Q_T} m^{r+1} \, dxdt \le C_1.
\end{equation}
\smallskip
{\bf Step 2.} Second order estimates. These are obtained by testing the Hamilton-Jacobi equation with $\Delta m$ and the Fokker-Planck equation with $\Delta u$:
\begin{multline}\label{equzz}
\iint_{Q_T}  \mathrm{Tr}(D^2_{pp}H(D^2 u)^2)m\, dxdt+\iint_{Q_T}g'(m)|Dm|^2\, dxdt\\
= \int_{\T}\Delta u_T m(T)\,dx-\int_{\T}\Delta u(0)m_0\,dx\\
-2\iint_{Q_T} \mathrm{Tr}(D^2_{px}HD^2 u)m\, dxdt-\iint_{Q_T} \Delta_xH(x,Du)m\, dxdt\ .
\end{multline}
On one hand, $\iint  \mathrm{Tr}(D^2_{pp}H(D^2 u)^2)m \ge \iint [C_H^{-1}(1+|Du|^2)^{\frac{\gamma-2}2}|D^2u|^2 -C_H]m$ by \eqref{H2}, while on the other hand by Young and Cauchy-Schwarz inequality
\begin{multline*}
-2\iint_{Q_T} \mathrm{Tr}(D^2_{px}HD^2 u)m\, dxdt-\iint_{Q_T} \Delta_xH(x,Du)m\, dxdt \le \\
\frac {C_H^{-1}} 2 \iint_{Q_T}(1+|Du|^2)^{\frac{\gamma-2}2}|D^2u|^2m +c\left( \iint_{Q_T} H(x,Du) m \,dxdt + \iint_{Q_T} m \,dxdt\right),
\end{multline*}
therefore, back to \eqref{equzz}, integrating by parts we obtain
\begin{multline*}
\frac{C_H^{-1}} 2 \iint_{Q_T}(1+|Du|^2)^{\frac{\gamma-2}2}|D^2u|^2 m \, dxdt +\iint_{Q_T}g'(m)|Dm|^2\, dxdt\\ 
= \int_{\T} \Delta u_T  m(T)\,dx-\int_{\T} u(0) \Delta m_0\,dx +c \iint_{Q_T} |Du|^{\gamma}m \,dxdt + cT.
\end{multline*}
Plugging in \eqref{foest}, using lower bounds on $g'$ and  the fact that $u \ge \min u_T + \min H(\cdot,0)$ by the comparison principle, we finally get
\begin{equation}\label{soest}
\iint_{Q_T} (1+|Du|^2)^{\frac{\gamma-2}2}|D^2u|^2 m \,dxdt + \iint_{Q_T}|D(m^{\frac{r+1}{2}})|^2 \,dxdt \le C_2.
\end{equation}
\smallskip
{\bf Step 3.} Setting $b(x,t) = -D_pH(x,Du(x,t))$, by the assumptions on $H$, \eqref{foest} and \eqref{soest} (recall also that $|D_pH(x,p)| \ge c^{-1}|p|^{\gamma-1} - c$), we have
\begin{align*}
&\iint_{Q_T}|\mathrm{div}(b)|^2m\,dxdt\leq C \quad \text{if $\gamma \le 2$} \\
&\iint_{Q_T}{|\mathrm{div}(b)|^2}{(1 + |b|)^\frac{2-\gamma}{\gamma-1}} m\,dxdt \le C \quad \text{if $\gamma > 2$},
\end{align*}
and since $m$ solves a Fokker-Planck equation with drift $b$, applying Lemma \ref{mfurtherreg} with $\mu=2$ if $\gamma \le 2$ and $\mu = \gamma$ if $\gamma > 2$ yields 
\[
\|m\|_{L^\infty(0,T;L^{\eta}(\T))}\leq C_\eta \qquad \text{ for any }1\leq \eta \le 
\begin{cases} \frac{d}{d-2} & \text{if $\gamma \le 2$} \smallskip \\
\frac{d(\gamma-1)}{d(\gamma-1)-2} & \text{if $\gamma > 2$.}
\end{cases}
\]
Therefore, using again \eqref{soest}, we have bounds on $m^{\frac{r+1}2}$ in $L^\infty(0,T;L^{\eta\frac2{r+1}}(\T)) \cap L^2(0,T;W^{1,2}(\T))$, which, by parabolic interpolation \cite[Proposition I.3.2]{DiBen} imply
\[
\|m^r\|_{L^q(\T)} \le C_2 \qquad \text{ for any } q < 
\begin{cases} 1 + \frac{d}{r(d-2)} & \text{if $\gamma \le 2$} \smallskip \\
1 + \frac{(d+2)(\gamma-1)-2}{r[d(\gamma-1)-2]} & \text{if $\gamma > 2$.}
\end{cases}
\]
Under our assumptions on $r$, the exponent $q$ can be chosen large enough to apply Theorem \ref{maxreg1}, that yields the assertion.

\end{proof}

The following lemma is needed to extract regularity information on $m$.

\begin{lemma}\label{mfurtherreg}
Let $m$ be a classical solution to 
\[
\partial_t m-\Delta m+\mathrm{div}(b(x,t)m)=0 \qquad\text{ in }Q_T\ 
\]
and assume that for $\mu \ge 2$
\[
\left(\iint_{Q_T}{|\mathrm{div}(b)|^2}{(1 + |b|)^\frac{2-\mu}{\mu-1}} m\,dxdt\right)^{\frac12}\left(\iint_{Q_T}|b|^{\frac{\mu}{\mu-1}} m\,dxdt\right)^{\frac{\mu-2}{2\mu}} \leq K\ .
\]
Then, there exists a constant $C$ depending on $K$,$\mu$ and $d$, such that\[
\|m(t)\|_{L^{p}(\T)}\leq C+\|m(0)\|_{L^{p}(\T)} \qquad \forall t \in [0,T],
\]
where $p = \frac{d(\mu-1)}{d(\mu-1)-2}$ if $d>2$, while $p\in[1,\infty)$  if $d \le 2$.
\end{lemma}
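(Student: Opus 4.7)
I would argue by an energy estimate on the Fokker--Planck equation for $m$, tuned so that the right-hand side is controlled exactly by the weighted quantity appearing in the hypothesis. First I would test the equation against $m^{p-1}$ and integrate by parts in $x$, to obtain
\[
\frac{d}{dt}\int_{\T} m^p\,dx + \frac{4(p-1)}{p}\int_{\T} |D(m^{p/2})|^2\,dx = -(p-1)\int_{\T}\mathrm{div}(b)\,m^p\,dx,
\]
so that the drift contribution is expressed through $\mathrm{div}(b)$ rather than $b$ itself; this is crucial, because it is $\mathrm{div}(b)$ that is controlled in the hypothesis.

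Next, I would bound the right-hand side by a three-factor H\"older inequality in $x$ with exponents $2,\ \tfrac{2\mu}{\mu-2},\ \mu$ (which indeed sum to $1$), weighted by matching powers of $(1+|b|)$. A direct algebraic check shows that the powers of $m$ and $(1+|b|)$ reproduce the two weights in the hypothesis and that the residual factor is $(\int m^{p(d+2)/d})^{1/\mu}$, \emph{precisely} when $p=\tfrac{d(\mu-1)}{d(\mu-1)-2}$. Integrating in $t$ and applying H\"older once more with the same triple of exponents, the hypothesis then yields
\[
\iint_{Q_T} |\mathrm{div}(b)|\,m^p\,dxdt \le C(K,T,\|m_0\|_{L^1})\Big(\iint_{Q_T} m^{p(d+2)/d}\,dxdt\Big)^{1/\mu},
\]
where the conservation of mass $\int_{\T} m(t)\,dx=\int_{\T} m_0\,dx$ absorbs the discrepancy between $|b|^{\mu/(\mu-1)}$ (in the hypothesis) and $(1+|b|)^{\mu/(\mu-1)}$ (in the H\"older splitting).

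To close the estimate, I would apply the parabolic Sobolev embedding of Ladyzhenskaya type (\cite[Proposition I.3.1]{DiBen}) to $v=m^{p/2}$, which gives
\[
\iint_{Q_T} m^{p(d+2)/d}\,dxdt \le C\Big(\sup_{t}\|m(t)\|_{L^p}^{p}\Big)^{2/d}\Big(T\sup_{t}\|m(t)\|_{L^p}^{p}+\iint_{Q_T} |D(m^{p/2})|^2\,dxdt\Big).
\]
Setting $S:=\sup_t\|m(t)\|_{L^p}^{p}$ and $E:=\iint|D(m^{p/2})|^2$, combining with the energy identity produces
\[
S + c_0 E \le \|m_0\|_{L^p}^p + C_1\,\Big[S^{2/d}(TS+E)\Big]^{1/\mu}.
\]
Splitting and applying Young's inequality to the two arising terms $S^{(d+2)/(d\mu)}$ and $E^{1/\mu}S^{2/(d\mu)}$, both exponents turn out to be strictly below $1$ precisely when $d(\mu-1)>2$, i.e.\ exactly when the target $p$ is finite; this permits absorption of small multiples of $S$ and $E$ into the left-hand side and delivers the claimed estimate. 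In the low-dimensional case $d\le 2$, I would replace the Ladyzhenskaya embedding with $W^{1,2}(\T)\hookrightarrow L^q(\T)$ for arbitrary $q<\infty$, recovering the conclusion for any $p\in[1,\infty)$.

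The hard part is really the algebra: verifying that the three-factor weighted H\"older decomposition matches the \emph{two} hypothesized weighted integrals \emph{and} simultaneously produces the exponent $p(d+2)/d$ of $m$ demanded by the parabolic Sobolev embedding — this triple matching forces the choice $p=d(\mu-1)/(d(\mu-1)-2)$ and explains why nothing better can be extracted from this argument in dimension $d>2$. Once the bookkeeping is in place, the final Young/absorption step is routine.
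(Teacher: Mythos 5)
Your proof is correct and follows essentially the same route as the paper: test against $m^{p-1}$, split the drift term by a three-factor H\"older inequality with exponents $(2,\tfrac{2\mu}{\mu-2},\mu)$ so that the first and third factors reproduce the two hypothesized weighted integrals, apply the Ladyzhenskaya parabolic embedding to $m^{p/2}$, and close by Young's inequality using $d(\mu-1)>2$. Your handling of the $(1+|b|)$-versus-$|b|$ mismatch via conservation of mass, and your inclusion of the lower-order term $T\sup_t\|m(t)\|_{L^p}^p$ in the parabolic embedding, are in fact slightly more careful than the paper's own bookkeeping at those two points.
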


\begin{proof} The estimate can be obtained by testing the equation against $m^{p-1}$ and using parabolic interpolation (see \cite[Theorem 4.1]{Gomessub} for further details). We briefly sketch it here for completeness. Testing the equation and integrating by parts yields 
\[
\int_\T m^p(t) dx + \frac{4(p-1)}{p} \int_0^t \int_\T |D m^{\frac p2}|^2= \int_\T m^p(0) dx - (p-1)\int_0^t \int_\T\mathrm{div}(b) m^p \, dxdt.
\]
We write
\[
\int_0^t \int_\T\mathrm{div}(b) m^p \, dxdt=\int_0^t \int_\T\mathrm{div}(b)m^\frac12|b|^{\frac{2-\mu}{2(\mu-1)}}|b|^{\frac{\mu-2}{2(\mu-1)}}m^{\frac{\mu(p-1)+1}{\mu}}m^{\frac{\mu-2}{2\mu}}\,dxdt
\]
Therefore, applying generalized Holder's inequality with exponents $(2,\mu,\frac{2\mu}{\mu-2})$ we deduce
\begin{multline}\label{bo}
\int_\T m^p(t) dx + \frac{4(p-1)}{p} \int_0^t \int_\T |D m^{\frac p2}(s)|^2 \,dxds  \\ \le \int_\T m^p(0) dx + (p-1)K\left(\int_0^t \int_\T m^{\mu(p-1)+1}(s) \, dxds\right)^{\frac1\mu}.
\end{multline}
We then apply parabolic interpolation inequalities (see e.g. \cite[Proposition I.3.1]{DiBen})
\[
\|z\|_{L^{\zeta}(\T\times(0,t))}\leq C\|z\|_{L^\infty(0,t;L^\nu(\T))}^{\frac{\nu \delta}{d}}\|Dz\|_{L^{\delta}(\T\times(0,t)}\text{ with }\zeta=\delta\frac{d+\nu}{d}
\]
with $z=m^{\frac{p}{2}}$, $\delta=2$ and $\nu=\frac{d(\mu-1)}{p'}$, to deduce
\begin{multline*}
\int_0^t \int_\T m^{\mu(p-1)+1}(s) \, dxds=\int_0^t \int_\T m^{\frac{p}{2}\frac{\mu(p-1)+1}{p}2} \, dxds\\\leq C\left(\int_0^t \int_\T |Dm^{\frac{p}{2}}|^2\right)^{\frac{2}{d}}\left(\sup_{s \in [0,t]} \int_\T m^p(s) dx\right).
\end{multline*}
Then, using also Young's inequality, \eqref{bo} is less than or equal to
\begin{multline*}
 \int_\T m^p(0) dx + C\left(\int_0^t \int_\T |D m^{\frac p2}(s)|^2\,dxds \right)^{\frac1\mu}\left(\sup_{s \in [0,t]} \int_\T m^p(s) dx\right)^{\frac2{d\mu}} \\
\le \int_\T m^p(0) dx +  \frac{4(p-1)}{p} \int_0^t \int_\T |D m^{\frac p2}|^2 + C_1\left(\sup_{s \in [0,t]} \int_\T m^p(s) dx\right)^{\frac2{d(\mu-1)}}.
\end{multline*}
If $d > 2$, then $\frac2{d(\mu-1)} < 1$ and we are done. If $d \le 2$, the proof is somehow simpler, by different nature of Sobolev embeddings of $W^{1,2}(\T)$; we refer to \cite{Gomessup}.
\end{proof}

\subsection{The non-monotone (or even focusing) case}

\begin{proof}[Proof of Theorem \ref{mfg2}]
We detail only the derivation of a priori estimates for smooth solutions to \eqref{mfgdef}, i.e. the existence of a constant $C>0$ (depending only on the data) such that
\begin{equation}\label{apestu}
\|u\|_{W^{2,1}_q(Q_T)}+\|Du\|_{L^{\gamma q}(Q_T)} \leq C, \qquad q > \frac{d+2}{\gamma'}
\end{equation}
Since we fall in the maximal regularity regime for the Hamilton-Jacobi equation, the existence of a solution to the MFG system can then be derived as in Theorem \ref{mfg1}.

We start from first order estimates as in Lemma \ref{estm}, that now yield, using the assumptions on $g$,
\[
\iint_{Q_T}L\big(x,D_pH(x,Du)\big)m \, dxdt - \iint_{Q_T} m^{r+1} \, dxdt \le C_1.
\]
To bound the two terms in the left hand side separately, a further step is needed. Recall a crucial Gagliardo-Nirenberg type inequality proven in \cite[Proposition 2.5]{CT}, that reads
\begin{equation}\label{gnct}
\left(\int_{Q_T} m^{r+1}(x,t) \, dxdt \right)^\delta \le C\left(\int_{Q_T} |D_p H(x, Du(x,t))|^{\gamma'} \, dxdt + 1\right).
\end{equation}
for some $C > 0$ and $\delta < 1$, provided that $r < \gamma' / d$. Then, using this inequality, the assumptions on $L$ and the fact that $\int_\T m(t) = 1$ for all $t$,
\begin{multline*}
C_L^{-1}\int_{Q_T} |D_p H(x, Du(x,t))|^{\gamma'} - C_L T\ \\ \le \iint_{Q_T}L\big(x,D_pH(x,Du)\big)m \, dxdt \le C\left(\int_{Q_T} |D_p H(x, Du(x,t))|^{\gamma'} \, dxdt + 1\right)^{1/\delta} +  C_1.
\end{multline*}
Hence, back to \eqref{gnct}, we get
\begin{equation}\label{foest3}
\|m\|_{L^{r+1}(Q_T)}  \le C_2.
\end{equation}
Therefore, $\|g(m)\|_{L^{\frac{r+1}{r}}(Q_T)}  \le C_2$. Note that by the assumptions on $r$, $q = \frac{r+1}r$ is large enough  to apply Theorem \ref{maxreg1}, and \eqref{apestu} follows.

\end{proof}

\appendix
\section{Some embedding theorems}

\begin{lemma}\label{embprel}
For $p > 1$, the space $\H_p^1(Q_T)$ is continuously embedded into $C([0,T];W^{1-2/p,p}(\T))$, and $W^{2,1}_p(Q_T)$ is continuously embedded into $C([0,T];W^{2-2/p,p}(\T))$. 
\end{lemma}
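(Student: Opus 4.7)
The proof is built on the Lions--Peetre trace theorem in the $K$-method form: if $X_0 \hookrightarrow X_1$ are two Banach spaces and $u \in L^p(0,T;X_0)$ with $u' \in L^p(0,T;X_1)$, then $u$ admits a continuous representative in $C([0,T];(X_0,X_1)_{1/p,p})$ and the map $u \mapsto u|_{\{t\}}$ is continuous (see e.g. \cite{Lunardi} or \cite{DL}). My plan is to recast each of the two spaces in the statement in the form required by this theorem and then compute the relevant real interpolation space by means of the identification $(L^p(\T),W^{m,p}(\T))_{\theta,q}\simeq B^{\theta m}_{p,q}(\T)$ recalled in Section \ref{stat}.

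For $W^{2,1}_p(Q_T)$, I would write $W^{2,1}_p(Q_T) \subseteq \{u \in L^p(0,T;W^{2,p}(\T)) : \partial_t u \in L^p(0,T;L^p(\T))\}$, which is exactly the setting of Lions--Peetre with $X_0 = W^{2,p}(\T)$ and $X_1 = L^p(\T)$. Hence $u \in C([0,T];(W^{2,p}(\T),L^p(\T))_{1/p,p})$, and since $(W^{2,p}(\T),L^p(\T))_{1/p,p}=(L^p(\T),W^{2,p}(\T))_{1-1/p,p} \simeq B^{2-2/p}_{p,p}(\T) \simeq W^{2-2/p,p}(\T)$, the second assertion follows, with the continuity of the embedding encoded in the trace theorem.

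For $\H_p^1(Q_T)$, the preliminary (and slightly more delicate) step is to identify the dual space that appears in the definition: by reflexivity and the Radon--Nikodym property of $W^{-1,p}(\T)=(W^{1,p'}(\T))'$, one has the Bochner duality $(W^{1,0}_{p'}(Q_T))' = (L^{p'}(0,T;W^{1,p'}(\T)))' \simeq L^p(0,T;W^{-1,p}(\T))$. Thus $\H_p^1(Q_T)$ embeds into $\{u \in L^p(0,T;W^{1,p}(\T)) : \partial_t u \in L^p(0,T;W^{-1,p}(\T))\}$, and Lions--Peetre with $X_0 = W^{1,p}(\T)$, $X_1 = W^{-1,p}(\T)$ yields $u \in C([0,T];(W^{-1,p}(\T),W^{1,p}(\T))_{1-1/p,p})$. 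The interpolation of Bessel-potential Sobolev spaces on $\T$ at the same summability exponent gives $(W^{-1,p}(\T),W^{1,p}(\T))_{1-1/p,p}\simeq B^{1-2/p}_{p,p}(\T)\simeq W^{1-2/p,p}(\T)$, which is the desired target space.

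The main obstacle I foresee is the identification $(W^{-1,p},W^{1,p})_{1-1/p,p}\simeq W^{1-2/p,p}(\T)$, since it sits outside the statement of the real-interpolation identity of Section \ref{stat}, which is phrased only for the positive-order couple $(L^p,W^{m,p})$. I would handle this either via Fourier multipliers/Bessel potentials on the torus (so that $J^{-1}:W^{-1,p}\to L^p$ and $J^{-1}:W^{1,p}\to W^{2,p}$ are isomorphisms and interpolation commutes with isomorphisms), reducing the problem to the already quoted $(L^p,W^{2,p})_{1-1/p,p}\simeq W^{2-2/p,p}(\T)$ and then applying $J$ back to lose one derivative; alternatively, one can invoke the general identification of real-interpolation spaces between two Sobolev (or Besov) scales of the same integrability exponent, see \cite[Theorem 17.24]{Leoni} and the Besov characterization on $\T$ from \cite[Section 3.5.4]{ST}. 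The Bochner-duality step can be quickly justified by the separability and reflexivity of $W^{1,p'}(\T)$, and the rest is routine.
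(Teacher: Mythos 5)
Your proposal is correct and follows essentially the same strategy as the paper: reduce the claim to a Lions--Peetre trace theorem and then identify the trace space by real interpolation. The only (minor) differences are that the paper first extends $u$ to the half-line $(0,\infty)$ so that \cite[Corollary 1.14]{Lunardi} applies verbatim, and it establishes the interpolation identity $(W^{-1,p}(\T),W^{1,p}(\T))_{1-1/p,p}\simeq W^{1-2/p,p}(\T)$ via the Reiteration Theorem rather than via your Bessel-potential isomorphism argument; these are interchangeable routes to the same endpoint.
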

\begin{proof}
We consider the embedding of $\H_p^1$ only (the other can be obtained similarly). Recall that the case $p=2$ is classical (see e.g. \cite[Theorem XVIII.2.1]{DL}). The general statement can be proven via abstract methods for evolution problems, see \cite{Amann,PS}. We provide a short proof for reader's convenience.
First, $u\in\H_p^1(Q_T)$ can be extended to a $v \in \H_p^1(\T \times (0,\infty))$ in the usual way: let $u(t) = u(T)$ for all $t \ge T$, and set $v(t)=\zeta(t)u(t)$, where $\zeta$ is  a smooth function in $(0,\infty)$ which vanishes for $t\geq T+1$ and is identically one for $t\in[0,T]$. Then, since $\H_p^1(\T\times(0,+\infty)) \simeq W^{1,p}(0,+\infty; (W^{1,p'}(\T))^{'})\cap L^p(0,+\infty;W^{1,p}(\T))$, apply \cite[Corollary 1.14]{Lunardi} to obtain that
\[
\H_p^1(\T\times(0,+\infty))\hookrightarrow C_b([0,+\infty);(W^{-1,p}(\T),W^{1,p}(\T))_{1-1/p,p}).
\]
One then concludes by means of the Reiteration Theorem \cite[Theorem 1.23]{Lunardi} (see also \cite{Leoni}) that \[(W^{-1,p}(\T),W^{1,p}(\T))_{1-1/p,p}\simeq W^{1-2/p,p}(\T),\] which gives the statement.\\
\end{proof}

\begin{prop}\label{emb1}
For $p > 1$, the parabolic space $\H_p^{1}(Q_T)$ into $L^{\delta}(0,T;W^{\alpha,\delta}(\T))$, where $\delta > p$ and
\[
\alpha = 1+\frac{d+2}{\delta}-\frac{d+2}{p}.
\]
\end{prop}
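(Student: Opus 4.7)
I would prove this by reducing the anisotropic embedding to the classical parabolic Sobolev embedding for $W^{2,1}_p$, via a one-derivative lift in the spatial variable.

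First, I would extend $u \in \H_p^1(Q_T)$ to a function $\tilde u \in \H_p^1(\T \times \R)$ compactly supported in time, with $\|\tilde u\|_{\H_p^1(\T\times\R)} \le C\|u\|_{\H_p^1(Q_T)}$. Indeed, Lemma \ref{embprel} gives $u \in C([0,T]; W^{1-2/p,p}(\T))$, so reflecting $u$ across $t=0$ and $t=T$ and multiplying by a smooth compactly supported time cutoff produces such $\tilde u$. It then suffices to prove the embedding for $\tilde u$ on $\T \times \R$ and restrict back to $Q_T$.

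Next, I would set $v := J\tilde u$ with $J := (I-\Delta_x)^{-1/2}$, the spatial Bessel potential of order $-1$, a Fourier multiplier on $\T$ with symbol $(1+|k|^2)^{-1/2}$, $k \in \Z^d$. By Mikhlin's theorem on the torus, $J : W^{s,p}(\T) \to W^{s+1,p}(\T)$ is bounded for every $s \in \R$ and $1 < p < \infty$. Since $J$ commutes with $\partial_t$,
\[
v \in L^p(\R; W^{2,p}(\T)), \qquad \partial_t v = J\partial_t \tilde u \in L^p(\R; L^p(\T)),
\]
so $v \in W^{2,1}_p(\T\times\R)$ with norm controlled by $\|\tilde u\|_{\H_p^1}$. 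I would then invoke the anisotropic parabolic Sobolev embedding
\[
W^{2,1}_p(\T\times\R) \hookrightarrow L^\delta(\R; W^{\beta,\delta}(\T)), \qquad \beta := 2 + \frac{d+2}{\delta} - \frac{d+2}{p},
\]
valid for $\delta \ge p$ with $\beta \in [0,2]$; the balance $\beta + (d+2)/\delta = 2 + (d+2)/p$ reflects the parabolic scaling $(x,t) \mapsto (\lambda x, \lambda^2 t)$, under which both $W^{2,1}_p$ and $L^\delta(W^{\beta,\delta})$ carry the same homogeneity. Finally, applying $(I-\Delta_x)^{1/2} : W^{\beta,\delta}(\T) \to W^{\beta-1,\delta}(\T) = W^{\alpha,\delta}(\T)$, again bounded by Mikhlin, yields $\tilde u = (I-\Delta_x)^{1/2} v \in L^\delta(\R; W^{\alpha,\delta}(\T))$ with $\alpha = \beta - 1 = 1 + (d+2)/\delta - (d+2)/p$, and restriction to $Q_T$ concludes.

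The main technical obstacle is to quote (or derive) the anisotropic parabolic embedding at the intermediate spatial order $\beta \in (0,2)$, rather than the familiar endpoint $W^{2,1}_p \hookrightarrow L^{p(d+2)/(d+2-2p)}(Q_T)$. This can be obtained by real interpolation in Lions--Peetre scales between that critical endpoint ($\beta = 0$, $\delta$ critical) and the trivial $W^{2,1}_p \hookrightarrow L^p(W^{2,p})$ ($\beta = 2$, $\delta = p$), together with the reiteration-type identification of $L^\delta(W^{\beta, \delta})$ as an interpolation space; alternatively, it follows directly from parabolic Bessel-potential Fourier-multiplier estimates. The lift/unlift steps are routine on the torus, but deserve care in ensuring that $J$ consistently realizes the isomorphism $\H_p^1 \simeq W^{2,1}_p$ (after shifting one spatial derivative), so that the $L^p(W^{1,p})$ and $L^p(W^{-1,p})$ components of the $\H_p^1$-norm are both absorbed into the $W^{2,1}_p$-norm of $v$.
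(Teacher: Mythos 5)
Your strategy --- lift $\H_p^1$ to $W^{2,1}_p$ via the spatial Bessel potential $J=(I-\Delta_x)^{-1/2}$, invoke an anisotropic parabolic embedding for $W^{2,1}_p$, then unlift with $J^{-1}$ --- is genuinely different from the paper's. The paper works pointwise in time and entirely within the Sobolev--Slobodeckij scale: it realizes $W^{\nu,p}(\T)$ (with $\nu=(1-2/p)(1-\theta)+\theta$, $\theta=p/\delta$) as a real interpolation space between $W^{1-2/p,p}(\T)$ and $W^{1,p}(\T)$, so that $\|u(t)\|_{W^{\nu,p}}\le \|u(t)\|_{W^{1-2/p,p}}^{1-\theta}\|u(t)\|_{W^{1,p}}^{\theta}$, then applies the spatial Sobolev embedding $W^{\nu,p}\hookrightarrow W^{\alpha,\delta}$, raises to the power $\delta=p/\theta$, integrates in time, and absorbs the low-regularity factor into the $C([0,T];W^{1-2/p,p})$ norm supplied by Lemma \ref{embprel}. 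No lift and no genuinely anisotropic space-time embedding is needed.

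Your lift step is fine: $J$ sends $L^p(W^{1,p})\to L^p(W^{2,p})$ and $L^p(W^{-1,p})\to L^p(L^p)$, so $v=J\tilde u\in W^{2,1}_p$ with comparable norm, and $J^{-1}=(I-\Delta_x)^{1/2}:W^{\beta,\delta}\to W^{\beta-1,\delta}$ by Mikhlin. The gap is precisely the anisotropic embedding $W^{2,1}_p\hookrightarrow L^\delta(W^{\beta,\delta})$ at intermediate $\beta\in(0,2)$, and neither of the routes you sketch closes it. Real interpolation of Bochner spaces with mismatched outer exponents, $(L^{\delta_0}(L^{\delta_0}),L^p(W^{2,p}))_{\theta,q}$ with $\delta_0\neq p$, does not in general identify with $L^\delta((\cdot,\cdot)_{\theta,q})$; the Lions--Peetre identification requires matching outer exponents, and otherwise one lands in Lorentz--Bochner spaces, so the ``reiteration-type identification'' you invoke is not available. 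Complex interpolation would avoid this but produces the Bessel potential space $H^{\beta,\delta}=F^\beta_{\delta,2}$ in the spatial slot, not the Sobolev--Slobodeckij space $W^{\beta,\delta}=B^\beta_{\delta,\delta}$: these coincide only for integer $\beta$ or $\delta=2$, and $F^\beta_{\delta,2}\hookrightarrow B^\beta_{\delta,\delta}$ requires $\delta\ge 2$, whereas in the application inside the proof of Theorem \ref{mainholder} one typically takes $\delta=q'<2$. (For that application it would suffice to land in the Nikol'skii space $N^{\beta,\delta}=B^\beta_{\delta,\infty}$, into which $F^\beta_{\delta,2}$ always embeds, but that weaker conclusion is not the statement you set out to prove.) Note also that the most elementary way to prove your missing embedding --- interpolate $W^{2-2/p,p}$ against $W^{2,p}$ pointwise in time and use the $C([0,T];W^{2-2/p,p})$ bound --- is exactly the paper's argument shifted by one derivative, at which point the lift/unlift detour becomes superfluous.
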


\begin{proof}
We adapt a strategy presented in \cite{k1,k2} (see also \cite{CG1} for the Bessel potential spaces setting). Let $\theta=p/\delta \in (0,1)$ and $\nu =(1-2/p)(1-\theta)+\theta$. We now use the (real) interpolation in the Sobolev-Slobodeckij scale to observe that $W^{\nu,p}(\T)$ can be obtained by interpolation between $W^{1,p}(\T)$ and $W^{1- 2/p,p}(\T)$ (see \cite[Theorem 2.4.2 p.186 and eq. (16)]{trbookinterpolation}). Moreover, $W^{\nu,p}(\T)$ is continuously embedded into $W^{\nu+d/\delta-d/p,\delta}(\T)$, see \cite{ST}. Hence, for a.e. $t$,
\begin{equation*}
c(d, p, \delta)\norm{u(t)}_{W^{\nu-\frac{d}{p}+\frac{d}{\delta},\delta}(\T)}\leq\norm{u(t)}_{W^{\nu,p}(\T)}\leq \norm{u(t)}_{W^{1- 2/p,p}(\T)}^{1-\theta}\norm{u(t)}_{W^{1,p}(\T)}^{\theta}.
\end{equation*}
Then, $\alpha = \nu-\frac{d}{p}+\frac{d}{\delta} = 1+\frac{d}{\delta}-\frac{d+2(1-\theta)}{p}$ and 
\begin{multline*}
\left(\int_0^T\norm{u(t)}_{W^{\alpha,\delta}(\T)}^{\frac{p}{\theta}}dt\right)^{\theta}\leq
C_1\left(\int_0^T\norm{u(t)}_{W^{1- 2/p,p}(\T)}^{(1-\theta)\frac{p}{\theta}}\norm{u(t)}_{W^{1,p}(\T)}^pdt\right)^{\theta} \\
 \leq C_2\sup_{t \in [0, T]}\norm{u(t)}_{W^{1- 2/p,p}(\T)}^{(1-\theta)p}\left(\int_0^T\norm{u(t)}_{W^{1,p}(\T)}^pdt\right)^{\theta},
\end{multline*}
Recalling now that $\theta=p/\delta \in (0,1)$ we obtain
\[
\left(\int_0^T\norm{u(t)}_{W^{\alpha,\delta}(\T)}^{\delta}dt\right)^{\frac1\delta}\leq C_3\|u\|_{\H_p^1(Q_T)}.
\]
\end{proof}
\begin{lemma}\label{embnik}
For $0 < \alpha < 1$ and $1\leq p<\infty$, $W^{\alpha,p}(\T)$ is continuously embedded into $N^{\alpha,p}(\T)$.
\end{lemma}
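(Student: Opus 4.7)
My plan is to prove the embedding by a classical averaging trick that converts an integral control (the Gagliardo seminorm of $W^{\alpha,p}(\T)$) into a pointwise control (the Nikol'skii seminorm of $N^{\alpha,p}(\T)$). Set
\[
g(h) := \|u(\cdot+h)-u\|_{L^p(\T)}.
\]
By translation invariance of the $L^p$-norm, $g$ is symmetric ($g(-h)=g(h)$) and subadditive: writing $u(x+h_0)-u(x) = [u(x+h_0)-u(x+k)]+[u(x+k)-u(x)]$ for any $k$ gives $g(h_0) \le g(h_0-k)+g(k)$, and therefore
\[
g(h_0)^p \le 2^{p-1}\bigl(g(h_0-k)^p + g(k)^p\bigr).
\]

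The key step is to average this inequality in $k$ over a cleverly chosen ball. For $h_0 \in \T$ with $|h_0|=r$ small enough to lie below the injectivity radius of $\T$, I integrate over $B := B(h_0, r/2)$, which has measure $\sim r^d$. On $B$ one has $|k| \in [r/2, 3r/2]$, so $|k|^{d+\alpha p} \le (3r/2)^{d+\alpha p}$, whence
\[
\int_B g(k)^p \,dk = \int_B \frac{g(k)^p}{|k|^{d+\alpha p}}\,|k|^{d+\alpha p}\,dk \le C r^{d+\alpha p}\,[u]_{W^{\alpha,p}(\T)}^p.
\]
For the other term, substitute $\ell = h_0 - k$, so that the region becomes $B(0,r/2)$, on which the crude pointwise bound $|\ell|^{d+\alpha p} \le (r/2)^{d+\alpha p}$ yields
\[
\int_{B(0,r/2)} g(\ell)^p \, d\ell \le (r/2)^{d+\alpha p} \int_{B(0,r/2)} \frac{g(\ell)^p}{|\ell|^{d+\alpha p}}\,d\ell \le C r^{d+\alpha p}\,[u]_{W^{\alpha,p}(\T)}^p.
\]
Dividing the averaged version of $g(h_0)^p$ by $|B| \sim r^d$, I conclude $g(h_0) \le C r^\alpha\,[u]_{W^{\alpha,p}(\T)} = C|h_0|^\alpha\,[u]_{W^{\alpha,p}(\T)}$.

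For the remaining regime in which $|h_0|$ is bounded below (i.e.\ comparable to the diameter of $\T$), the trivial triangle estimate $g(h_0) \le 2\|u\|_{L^p(\T)}$ together with the positive lower bound on $|h_0|$ gives $|h_0|^{-\alpha} g(h_0) \le C \|u\|_{L^p(\T)}$. Combining the two regimes yields
\[
\sup_{h\ne 0} |h|^{-\alpha} g(h) \le C\|u\|_{W^{\alpha,p}(\T)},
\]
which is precisely the Nikol'skii seminorm bound, and the embedding follows.

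The only delicate bookkeeping is in the second integral after the change of variables: since $\ell$ can be arbitrarily close to $0$, one cannot control $|\ell|^{d+\alpha p}$ from below by $r^{d+\alpha p}$, and must instead use only the upper bound $|\ell|^{d+\alpha p}\le (r/2)^{d+\alpha p}$; fortunately, this is exactly what is required to absorb the singular weight and reveal the full Gagliardo seminorm of $u$. Apart from this observation and the mild torus-vs-Euclidean check (harmless for small $|h_0|$, where geodesic and Euclidean distance agree), the argument is essentially a routine averaging and rearrangement, so I anticipate no further obstacles.
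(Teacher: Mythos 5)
Your argument is correct, and it takes a genuinely different and more self-contained route than the paper. The paper reduces the torus statement to the corresponding embedding $W^{\alpha,p}(\R^d)\hookrightarrow N^{\alpha,p}(\R^d)$, obtained from cited real-interpolation results, after inserting a cut-off and checking boundedness of the resulting extension operator. You instead give a direct proof by the classical averaging trick: combining the subadditivity $g(h_0)\le g(h_0-k)+g(k)$ of $g(h)=\|u(\cdot+h)-u\|_{L^p}$ with an average over $k\in B(h_0,|h_0|/2)$, and exploiting that on this ball $|k|\sim|h_0|$ while the shifted ball lies in $B(0,|h_0|/2)$, so both terms absorb into the Gagliardo seminorm times $|h_0|^{\alpha p}$. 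The large-$|h_0|$ regime is handled trivially. Each step checks out (including the geodesic-vs-Euclidean distance caveat for small increments, the Jacobian-free change of variable $\ell=h_0-k$, and the convexity inequality $(a+b)^p\le 2^{p-1}(a^p+b^p)$). What each approach buys: the paper's proof is shorter given the black-box $\R^d$ embedding and fits the interpolation-theoretic framework used elsewhere; yours is elementary, avoids extension operators and interpolation altogether, and produces an explicit constant depending only on $d$, $p$, $\alpha$, and the injectivity radius of $\T$.
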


\begin{proof} We just need to localize analogous results on $\R^d$, which go back to \cite[Lemma 9 p. 441]{T}, see also \cite[Proposition 10-(b) Section 5.2]{Stein} and \cite[Theorem 17.38]{Leoni} for a proof via real interpolation methods. Let $\chi$ be a compactly supported cut-off function such that $\chi\equiv 1$ on the unit cube $[-2,2]^d$.  It is immediate to see that the extension operator is bounded
\[
W^{k,p}(\T) \ni u\longmapsto \tilde u=\chi u \in W^{k,p}(\R^d)
\]
for all nonnegative integers $k\geq0$ and $p\geq1$, so it is bounded from $W^{\alpha,p}(\T)$ to $W^{\alpha,p}(\R^d)$ by interpolation. 
Then,
\[
\|u\|_{N^{\alpha,p}(\T)}\leq C_1\|\tilde u\|_{N^{\alpha,p}(\R^d)}\leq C_2\|\tilde u\|_{W^{\alpha,p}(\R^d)}\leq C_3\|u\|_{W^{\alpha,p}(\T)}\ ,
\]
where the second inequality relies on the aforementioned embedding $W^{\alpha,p}(\R^d) \hookrightarrow N^{\alpha,p}(\R^d)\simeq B_{p\infty}^\alpha(\R^d)$.
\end{proof}

\small


\begin{thebibliography}{10}

\bibitem{Amann}
H.~Amann.
\newblock {\em Linear and quasilinear parabolic problems. {V}ol. {I}},
  volume~89 of {\em Monographs in Mathematics}.
\newblock Birkh\"{a}user Boston, Inc., Boston, MA, 1995.
\newblock Abstract linear theory.

\bibitem{AC}
H.~Amann and M.~G. Crandall.
\newblock On some existence theorems for semi-linear elliptic equations.
\newblock {\em Indiana Univ. Math. J.}, 27(5):779--790, 1978.

\bibitem{Ambrose}
D.~M. Ambrose.
\newblock Strong solutions for time-dependent mean field games with
  non-separable {H}amiltonians.
\newblock {\em J. Math. Pures Appl. (9)}, 113:141--154, 2018.

\bibitem{AP}
A.~Attouchi and P.~Souplet.
\newblock Gradient blow-up rates and sharp gradient estimates for diffusive
  {H}amilton-{J}acobi equations.
\newblock {\em Calc. Var. Partial Differential Equations}, 59:153, 2020.

\bibitem{baSW}
M.~Ben-Artzi, P.~Souplet, and F.~B. Weissler.
\newblock The local theory for viscous {H}amilton-{J}acobi equations in
  {L}ebesgue spaces.
\newblock {\em J. Math. Pures Appl. (9)}, 81(4):343--378, 2002.

\bibitem{BBF}
A.~Bensoussan, D.~Breit, and J.~Frehse.
\newblock Parabolic {B}ellman-systems with mean field dependence.
\newblock {\em Appl. Math. Optim.}, 73(3):419--432, 2016.

\bibitem{BCCS}
S.~Bianchini, M.~Colombo, G.~Crippa, and L.~V. Spinolo.
\newblock Optimality of integrability estimates for advection-diffusion
  equations.
\newblock {\em NoDEA Nonlinear Differential Equations Appl.}, 24(4):Art. 33,
  19, 2017.

\bibitem{BOP}
L.~Boccardo, L.~Orsina, and A.~Porretta.
\newblock Some noncoercive parabolic equations with lower order terms in
  divergence form.
\newblock {\em J. Evol. Equ.}, 3(3):407--418, 2003.

\bibitem{BKRS}
V.~I. Bogachev, N.~V. Krylov, M.~R\"{o}ckner, and S.~V. Shaposhnikov.
\newblock {\em Fokker-{P}lanck-{K}olmogorov equations}, volume 207 of {\em
  Mathematical Surveys and Monographs}.
\newblock American Mathematical Society, Providence, RI, 2015.

\bibitem{Brezis}
H.~Brezis.
\newblock {\em Functional analysis, {S}obolev spaces and partial differential
  equations}.
\newblock Universitext. Springer, New York, 2011.

\bibitem{CGPT}
P.~Cardaliaguet, P.~J. Graber, A.~Porretta, and D.~Tonon.
\newblock Second order mean field games with degenerate diffusion and local
  coupling.
\newblock {\em NoDEA Nonlinear Differential Equations Appl.}, 22(5):1287--1317,
  2015.

\bibitem{NHM}
P.~Cardaliaguet, J.-M. Lasry, P.-L. Lions, and A.~Porretta.
\newblock Long time average of mean field games.
\newblock {\em Netw. Heterog. Media}, 7(2):279--301, 2012.

\bibitem{CSil}
P.~Cardaliaguet and L.~Silvestre.
\newblock H\"{o}lder continuity to {H}amilton-{J}acobi equations with
  superquadratic growth in the gradient and unbounded right-hand side.
\newblock {\em Comm. Partial Differential Equations}, 37(9):1668--1688, 2012.

\bibitem{CeCi}
A.~Cesaroni and M.~Cirant.
\newblock Concentration of ground states in stationary mean-field games
  systems.
\newblock {\em Anal. PDE}, 12(3):737--787, 2019.

\bibitem{CCPDE}
M.~Cirant.
\newblock Stationary focusing mean-field games.
\newblock {\em Comm. Partial Differential Equations}, 41(8):1324--1346, 2016.

\bibitem{CirJDE}
M.~Cirant.
\newblock On the existence of oscillating solutions in non-monotone mean-field
  games.
\newblock {\em J. Differential Equations}, 266(12):8067--8093, 2019.

\bibitem{CD}
M.~Cirant and D.~Ghilli.
\newblock Existence and non-existence of solutions to {M}{F}{G} with strong
  focusing couplings.
\newblock forthcoming, 2020.

\bibitem{CGM}
M.~Cirant, R.~Gianni, and P.~Mannucci.
\newblock Short-time existence for a general backward-forward parabolic system
  arising from mean-field games.
\newblock {\em Dyn. Games Appl.}, 10(1):100--119, 2020.

\bibitem{CG1}
M.~Cirant and A.~Goffi.
\newblock On the existence and uniqueness of solutions to time-dependent
  fractional {MFG}.
\newblock {\em SIAM J. Math. Anal.}, 51(2):913--954, 2019.

\bibitem{CG2}
M.~Cirant and A.~Goffi.
\newblock Lipschitz regularity for viscous {H}amilton-{J}acobi equations with
  {$L^p$} terms.
\newblock {\em Ann. Inst. H. Poincar\'{e} Anal. Non Lin\'{e}aire},
  37(4):757--784, 2020.

\bibitem{CG4}
M.~Cirant and A.~Goffi.
\newblock On the problem of maximal ${L}^q$-regularity for viscous
  {H}amilton-{J}acobi equations.
\newblock arXiv:2001.11970, 2020.

\bibitem{CT}
M.~Cirant and D.~Tonon.
\newblock Time-dependent focusing mean-field games: the sub-critical case.
\newblock {\em J. Dynam. Differential Equations}, 31(1):49--79, 2019.

\bibitem{CKS}
M.~G. Crandall, M.~Kocan, and A.~\'{S}wiech.
\newblock {$L^p$}-theory for fully nonlinear uniformly parabolic equations.
\newblock {\em Comm. Partial Differential Equations}, 25(11-12):1997--2053,
  2000.

\bibitem{Aglio}
A.~Dall'Aglio, D.~Giachetti, C.~Leone, and S.~Segura~de Le\'{o}n.
\newblock Quasi-linear parabolic equations with degenerate coercivity having a
  quadratic gradient term.
\newblock {\em Ann. Inst. H. Poincar\'{e} Anal. Non Lin\'{e}aire},
  23(1):97--126, 2006.

\bibitem{DL}
R.~Dautray and J.-L. Lions.
\newblock {\em Mathematical analysis and numerical methods for science and
  technology. {V}ol. 5}.
\newblock Springer-Verlag, Berlin, 1992.
\newblock Evolution problems. I, With the collaboration of Michel Artola,
  Michel Cessenat and H\'{e}l\`ene Lanchon, Translated from the French by Alan
  Craig.

\bibitem{DHP}
R.~Denk, M.~Hieber, and J.~Pr\"{u}ss.
\newblock {$\mathcal{R}$}-boundedness, {F}ourier multipliers and problems of
  elliptic and parabolic type.
\newblock {\em Mem. Amer. Math. Soc.}, 166(788):viii+114, 2003.

\bibitem{DiBen}
E.~DiBenedetto.
\newblock {\em Degenerate parabolic equations}.
\newblock Universitext. Springer-Verlag, New York, 1993.

\bibitem{Evans}
L.~C. Evans.
\newblock Adjoint and compensated compactness methods for {H}amilton-{J}acobi
  {PDE}.
\newblock {\em Arch. Ration. Mech. Anal.}, 197(3):1053--1088, 2010.

\bibitem{Posteraro}
V.~Ferone, M.~R. Posteraro, and J.~M. Rakotoson.
\newblock Nonlinear parabolic problems with critical growth and unbounded data.
\newblock {\em Indiana Univ. Math. J.}, 50(3):1201--1215, 2001.

\bibitem{Gomessub}
D.~A. Gomes, E.~A. Pimentel, and H.~S\'{a}nchez-Morgado.
\newblock Time-dependent mean-field games in the subquadratic case.
\newblock {\em Comm. Partial Differential Equations}, 40(1):40--76, 2015.

\bibitem{Gomesbook}
D.~A. Gomes, E.~A. Pimentel, and V.~Voskanyan.
\newblock {\em Regularity theory for mean-field game systems}.
\newblock SpringerBriefs in Mathematics. Springer, [Cham], 2016.

\bibitem{Gomessup}
D.~A. Gomes and H.~Pimentel, E.and S\'anchez-Morgado.
\newblock Time-dependent mean-field games in the superquadratic case.
\newblock {\em ESAIM Control Optim. Calc. Var.}, 22(2):562--580, 2016.

\bibitem{HP}
M.~Hieber and J.~Pr\"{u}ss.
\newblock Heat kernels and maximal {$L^p$}-{$L^q$} estimates for parabolic
  evolution equations.
\newblock {\em Comm. Partial Differential Equations}, 22(9-10):1647--1669,
  1997.

\bibitem{k2}
N.~V. Krylov.
\newblock An analytic approach to {SPDE}s.
\newblock In {\em Stochastic partial differential equations: six perspectives},
  volume~64 of {\em Math. Surveys Monogr.}, pages 185--242. Amer. Math. Soc.,
  Providence, RI, 1999.

\bibitem{k1}
N.~V. Krylov.
\newblock Some properties of traces for stochastic and deterministic parabolic
  weighted {S}obolev spaces.
\newblock {\em J. Funct. Anal.}, 183(1):1--41, 2001.

\bibitem{LSU}
O.~A. Ladyzenskaja, V.~A. Solonnikov, and N.~N. Ural'tseva.
\newblock {\em Linear and quasilinear equations of parabolic type}.
\newblock Translated from the Russian by S. Smith. Translations of Mathematical
  Monographs, Vol. 23. American Mathematical Society, Providence, R.I., 1968.

\bibitem{Lamberton}
D.~Lamberton.
\newblock \'{E}quations d'\'{e}volution lin\'{e}aires associ\'{e}es \`a des
  semi-groupes de contractions dans les espaces {$L^p$}.
\newblock {\em J. Funct. Anal.}, 72(2):252--262, 1987.

\bibitem{lltime}
J.-M. Lasry and P.-L. Lions.
\newblock Jeux \`a champ moyen. {II}. {H}orizon fini et contr\^{o}le optimal.
\newblock {\em C. R. Math. Acad. Sci. Paris}, 343(10):679--684, 2006.

\bibitem{ll}
J.-M. Lasry and P.-L. Lions.
\newblock Mean field games.
\newblock {\em Jpn. J. Math.}, 2(1):229--260, 2007.

\bibitem{Leoni}
G.~Leoni.
\newblock {\em A first course in {S}obolev spaces}, volume 181 of {\em Graduate
  Studies in Mathematics}.
\newblock American Mathematical Society, Providence, RI, second edition, 2017.

\bibitem{LionsSeminar}
P.-L. Lions.
\newblock Recorded video of S\'eminaire de Math\'ematiques appliqu\'ees at
  Coll\'ege de France, available at
  \url{https://www.college-de-france.fr/site/pierre-louis-lions/seminar-2014-11-14-11h15.htm},
  November 14, 2014.

\bibitem{Napoli}
P.-L. Lions.
\newblock {O}n {M}ean {F}ield {G}ames.
\newblock Seminar at the conference ``{T}opics in {E}lliptic and {P}arabolic
  {PDE}s", Napoli, September 11-12, 2014.

\bibitem{Lunardi}
A.~Lunardi.
\newblock {\em Interpolation theory}, volume~16 of {\em Appunti. Scuola Normale
  Superiore di Pisa (Nuova Serie) [Lecture Notes. Scuola Normale Superiore di
  Pisa (New Series)]}.
\newblock Edizioni della Normale, Pisa, 2018.

\bibitem{Magliocca}
M.~Magliocca.
\newblock Existence results for a {C}auchy-{D}irichlet parabolic problem with a
  repulsive gradient term.
\newblock {\em Nonlinear Anal.}, 166:102--143, 2018.

\bibitem{Maugeri}
M.~Marino and A.~Maugeri.
\newblock Differentiability of weak solutions of nonlinear parabolic systems
  with quadratic growth.
\newblock {\em Matematiche (Catania)}, 50(2):361--377 (1996), 1995.

\bibitem{S2}
A.~Maugeri, D.~K. Palagachev, and L.~G. Softova.
\newblock {\em Elliptic and parabolic equations with discontinuous
  coefficients}, volume 109 of {\em Mathematical Research}.
\newblock Wiley-VCH Verlag Berlin GmbH, Berlin, 2000.

\bibitem{MPR}
G.~Metafune, D.~Pallara, and A.~Rhandi.
\newblock Global properties of transition probabilities of singular diffusions.
\newblock {\em Teor. Veroyatn. Primen.}, 54(1):116--148, 2009.

\bibitem{Miranda}
C.~Miranda.
\newblock Su alcuni teoremi di inclusione.
\newblock {\em Annales Polonici Math.}, 16:305--315, 1965.

\bibitem{Nik}
S.~M. Nikol'skii.
\newblock {\em Approximation of functions of several variables and imbedding
  theorems}.
\newblock Springer-Verlag, New York-Heidelberg., 1975.
\newblock Translated from the Russian by John M. Danskin, Jr., Die Grundlehren
  der Mathematischen Wissenschaften, Band 205.

\bibitem{N}
L.~Nirenberg.
\newblock On elliptic partial differential equations.
\newblock {\em Ann. Scuola Norm. Sup. Pisa Cl. Sci. (3)}, 13:115--162, 1959.

\bibitem{NHolder}
L.~Nirenberg.
\newblock An extended interpolation inequality.
\newblock {\em Ann. Scuola Norm. Sup. Pisa Cl. Sci. (3)}, 20:733--737, 1966.

\bibitem{OP}
L.~Orsina and M.~M. Porzio.
\newblock {$L^\infty(Q)$}-estimate and existence of solutions for some
  nonlinear parabolic equations.
\newblock {\em Boll. Un. Mat. Ital. B (7)}, 6(3):631--647, 1992.

\bibitem{Porr}
A.~Porretta.
\newblock Weak solutions to {F}okker-{P}lanck equations and mean field games.
\newblock {\em Arch. Ration. Mech. Anal.}, 216(1):1--62, 2015.

\bibitem{PS}
J.~Pr\"{u}ss and R.~Schnaubelt.
\newblock Solvability and maximal regularity of parabolic evolution equations
  with coefficients continuous in time.
\newblock {\em J. Math. Anal. Appl.}, 256(2):405--430, 2001.

\bibitem{PrussSimonett}
J.~Pr\"{u}ss and G.~Simonett.
\newblock {\em Moving interfaces and quasilinear parabolic evolution
  equations}, volume 105 of {\em Monographs in Mathematics}.
\newblock Birkh\"{a}user/Springer, [Cham], 2016.

\bibitem{ST}
H.-J. Schmeisser and H.~Triebel.
\newblock {\em Topics in {F}ourier analysis and function spaces}.
\newblock A Wiley-Interscience Publication. John Wiley \& Sons, Ltd.,
  Chichester, 1987.

\bibitem{S1}
L.~G. Softova.
\newblock Quasilinear parabolic operators with discontinuous ingredients.
\newblock {\em Nonlinear Anal.}, 52(4):1079--1093, 2003.

\bibitem{SW}
L.~G. Softova and P.~Weidemaier.
\newblock Quasilinear parabolic problem in spaces of maximal regularity.
\newblock {\em J. Nonlinear Convex Anal.}, 7(3):529--540, 2006.

\bibitem{Stein}
E.~M. Stein.
\newblock {\em Singular integrals and differentiability properties of
  functions}.
\newblock Princeton Mathematical Series, No. 30. Princeton University Press,
  Princeton, N.J., 1970.

\bibitem{SV}
L.~F. Stokols and A.~F. Vasseur.
\newblock De {G}iorgi techniques applied to {H}amilton-{J}acobi equations with
  unbounded right-hand side.
\newblock {\em Commun. Math. Sci.}, 16(6):1465--1487, 2018.

\bibitem{T}
M.~H. Taibleson.
\newblock On the theory of {L}ipschitz spaces of distributions on {E}uclidean
  {$n$}-space. {II}. {T}ranslation invariant operators, duality, and
  interpolation.
\newblock {\em J. Math. Mech.}, 14:821--839, 1965.

\bibitem{taylor}
M.~E. Taylor.
\newblock {\em Partial differential equations {III}. {N}onlinear equations},
  volume 117 of {\em Applied Mathematical Sciences}.
\newblock Springer, New York, second edition, 2011.

\bibitem{T92}
H.~Triebel.
\newblock {\em Theory of function spaces. {II}}, volume~84 of {\em Monographs
  in Mathematics}.
\newblock Birkh\"{a}user Verlag, Basel, 1992.

\bibitem{trbookinterpolation}
H.~Triebel.
\newblock {\em Interpolation theory, function spaces, differential operators}.
\newblock Johann Ambrosius Barth, Heidelberg, second edition, 1995.

\end{thebibliography}

\medskip

\begin{flushright}
\noindent \verb"cirant@math.unipd.it"\\
\noindent \verb"alessandro.goffi@math.unipd.it"\\
Dipartimento di Matematica ``Tullio Levi-Civita'' \\
Universit\`a di Padova\\
via Trieste 63, 35121 Padova (Italy)
\end{flushright}

\end{document}